%% file: arxiv-landom.tex
\documentclass[11pt]{article}

\usepackage[letterpaper,margin=1in]{geometry}
\usepackage{amsmath,amssymb,amsthm,bm}
\usepackage{graphicx}
\usepackage{xcolor}
\usepackage{algorithm}
\usepackage{algpseudocode}
\usepackage{placeins}
\usepackage{enumitem}
\usepackage[round,authoryear]{natbib}
\usepackage[hidelinks,bookmarks=true]{hyperref}
\bibpunct{(}{)}{;}{a}{,}{,}

\newtheorem{theorem}{Theorem}
\newtheorem{lemma}[theorem]{Lemma}
\newtheorem{proposition}[theorem]{Proposition}
\newtheorem{corollary}[theorem]{Corollary}
\newtheorem{assumption}[theorem]{Assumption}
\theoremstyle{remark}
\newtheorem{remark}[theorem]{Remark}

\newcommand{\algname}{Landom}
\newcommand{\algplus}{Landom+}

\input{macros.tex}


\title{{\bf Infeasible Momentum Orthogonalized Methods for Optimization on the Stiefel Manifold}}
\author{\begin{tabular}{@{}c@{\quad}c@{\quad}c@{\quad}c@{}}
Yuxuan Zeng$^{1}$ & Qi Deng$^{1}$ & Dongdong Ge$^{1}$ & Yinyu Ye$^{1,2,3}$\\
{\normalfont\scriptsize\texttt{zengyuxuan0522@sjtu.edu.cn}} &
{\normalfont\scriptsize\texttt{qdeng24@sjtu.edu.cn}} &
{\normalfont\scriptsize\texttt{ddge@sjtu.edu.cn}} &
{\normalfont\scriptsize\texttt{yinyu-ye@stanford.edu}}
\end{tabular}\\[4pt]
{\normalfont\small $^{1}$Shanghai Jiao Tong University}\\
{\normalfont\small $^{2}$Shanghai Institute for Mathematics and Interdisciplinary Sciences (SIMIS)}\\
{\normalfont\small $^{3}$Stanford University}}
\date{}

\begin{document}
\maketitle

\input{main.tex}
\input{appendix.tex}

\bibliography{sample}
\end{document}

%% file: macros.tex
\global\long\def\brbra#1{\big(#1\big)}%
\global\long\def\bsbra#1{\big[#1\big]}%
\global\long\def\Bsbra#1{\Big[#1\Big]}%
\global\long\def\bcbra#1{\big\{#1\big\}}%

\global\long\def\vertiii#1{\left\vert \kern-0.25ex  \left\vert \kern-0.25ex  \left\vert #1\right\vert \kern-0.25ex  \right\vert \kern-0.25ex  \right\vert }%

\global\long\def\Df{\mathop{\mathrm{D}\kern0pt}\nolimits}%

\global\long\def\skewop{\operatorname{skew}}%
\global\long\def\symop{\operatorname{sym}}%
\newcommand{\cpolar}{\operatorname{polar}}

\global\long\def\grad{\operatorname{grad}}%
\global\long\def\and{\mathrm{and}}%
\global\long\def\reals{\mathbb{R}}%

\global\long\def\Abf{\mathbf{A}}%
\global\long\def\Ubf{\mathbf{U}}%
\global\long\def\Pbf{\mathbf{P}}%
\global\long\def\Ibf{\mathbf{I}}%
\global\long\def\Ebf{\mathbf{E}}%
\global\long\def\Mbf{\mathbf{M}}%
\global\long\def\Pbf{\mathbf{P}}%
\global\long\def\Wbf{\mathbf{W}}%
\global\long\def\Bbf{\mathbf{B}}%
\global\long\def\Cbf{\mathbf{C}}%
\global\long\def\Dbf{\mathbf{D}}%
\global\long\def\Gbf{\mathbf{G}}%
\global\long\def\Hbf{\mathbf{H}}%
\global\long\def\Kbf{\mathbf{K}}%
\global\long\def\Sbf{\mathbf{S}}%
\global\long\def\Vbf{\mathbf{V}}%
\global\long\def\Xbf{\mathbf{X}}%
\global\long\def\Ybf{\mathbf{Y}}%
\global\long\def\Zbf{\mathbf{Z}}%

\global\long\def\Psibf{\boldsymbol{\Psi}}%
\global\long\def\Omegabf{\boldsymbol{\Omega}}%
\global\long\def\Sigmabf{\boldsymbol{\Sigma}}%
\global\long\def\Lambdabf{\boldsymbol{\Lambda}}%
\global\long\def\Deltabf{\boldsymbol{\Delta}}%

%% file: main.tex
\begin{abstract}
We study deterministic and stochastic optimization over the Stiefel manifold, explicitly allowing iterates to deviate from the manifold rather than enforcing feasibility via retraction at each step.
We introduce \algname{}, a Landing-type method that constructs momentum in the relative-gradient direction, maps this update to its compact polar factor, and combines the resulting tangential step with a step along the negative gradient of the orthogonality penalty to improve feasibility.
Under operator-to-nuclear smoothness, 
\algname{} achieves an \(\mathcal O(T^{-1/2})\) bound on 
the averaged nuclear norm of the relative gradient with 
exact gradients, and an \(\mathcal O(T^{-1/4})\) bound in 
expectation with stochastic momentum, bounded variance, and 
a fixed batch size. Our analysis also provides explicit 
pathwise bounds on the orthogonality residual, including 
its dependence on the step size and initial infeasibility. 
To address the coupling between objective and feasibility correction in \algname{}, we further propose \algplus{}, a two-timescale variant in which the tangential and normal step sizes are decoupled.
By keeping the normal step size fixed, \algplus{} achieves tighter control over the orthogonality residual following an initial transient while maintaining the respective convergence rates for stationarity.
Numerical experiments across a range of applications illustrate that the proposed algorithms remain competitive with established manifold optimization approaches.
\end{abstract}

\section{Introduction}
\label{sec:introduction}

We consider the following stochastic optimization problem over the Stiefel manifold \(\operatorname{St}(n,p)\) with \(n\ge p\):
\begin{equation}
\label{eq:stiefel-problem}
\min_{\Xbf\in\operatorname{St}(n,p)}\;
f(\Xbf)=\mathbb E_{\xi\sim\mathcal D}\bsbra{f(\Xbf;\xi)},
\qquad
\operatorname{St}(n,p)
:=
\bcbra{
\Xbf\in\mathbb R^{n\times p}:\Xbf^\top \Xbf=\Ibf_p
},
\end{equation}
where \(\xi\) is a random sample drawn from a distribution \(\mathcal D\), and \(f(\cdot\,;\xi):\mathcal U\to\mathbb R\) denotes the associated sample loss, defined on an open set \(\mathcal U\subseteq\mathbb R^{n\times p}\) containing \(\operatorname{St}(n,p)\).
Optimization over the Stiefel manifold arises naturally when matrix variables are required to satisfy orthonormality constraints. Representative applications include principal component analysis (PCA), independent component analysis (ICA), canonical correlation analysis, and the orthogonal Procrustes problem \citep{hotelling1933principal,hyvarinen1999fast,hotelling1936relations,schonemann1966generalized}. Beyond these classical statistical and matrix problems, orthonormality constraints have also been used in neural-network training to control weight geometry and improve training stability \citep{bansal2018orthogonality,wang2020orthogonal}. More recently, optimization over the Stiefel manifold has been used for parameter-efficient adaptation of large language models \citep{park-etal-2025-riemannian}, while broader manifold constraints have been studied in language-model pre-training \citep{an2026demystifying}. Across these settings, a common algorithmic challenge is to optimize the objective while effectively enforcing the nonlinear orthonormality constraint.

Standard Riemannian methods enforce the orthonormality constraint by constructing tangent directions and applying a retraction so that every iterate remains exactly feasible. Common choices include QR-, polar-, and Cayley-type retractions \citep{absil2008optimization,absil2012projection,wen2013feasible}. These operations introduce additional matrix factorizations, inverse square roots, or linear solves, whose cost can become substantial for large constrained matrices, particularly when gradient evaluation is not the dominant bottleneck \citep{ablin2024infeasible,xiong2026second}. A simpler alternative is squared orthogonality regularization, but finite penalty parameters do not in general enforce exact feasibility, whereas large penalties can deteriorate conditioning \citep{bansal2018orthogonality,ablin2024infeasible}. The Landing framework instead permits infeasible iterates and combines a tangential update for decreasing \(f\) with a step along the negative gradient of the orthogonality penalty to drive the iterates toward the manifold; it was introduced for the orthogonal group by \citet{pmlr-v151-ablin22a} and later extended to the Stiefel manifold by \citet{ablin2024infeasible}. This viewpoint separates objective decrease from feasibility restoration without requiring exact orthogonalization at every step. Other infeasible approaches for orthogonality constraints include the parallelizable PLAM and PCAL methods of \citet{gao2019parallelizable}, which update the Lagrange multipliers by an explicit formula instead of a dual iteration; smooth exact penalty and constraint dissolving formulations \citep{xiao2022class,xiao2024dissolving}, which replace the constrained problem by an unconstrained one whose stationary points coincide with those of the original problem near the manifold; and the orthogonal directions constrained gradient method (ODCGM) of \citet{schechtman2023orthogonal}, which requires only a projection onto a linear space at each step and recovers the Landing algorithm on the Stiefel manifold. Like Landing, these methods construct their descent directions in the Frobenius geometry.

Once feasibility restoration is assigned a dedicated geometric role, a complementary question is how to choose the tangential update. Normalized first-order methods separate update direction from gradient magnitude \citep{hazan2015beyond,levy2016power,pmlr-v119-cutkosky20b}. More generally, direction selection can be formulated through norm-induced steepest descent or a linear minimization oracle (LMO) over a norm ball, where the chosen norm determines the update geometry \citep{bernstein2024old,pmlr-v267-pethick25a}. For matrix variables, matrix norms can retain structural information beyond the default Frobenius geometry. In particular, the spectral norm is dual to the nuclear norm, and the maximally aligned linear-minimization direction is the polar factor. Muon is a modern optimizer built around this spectral geometry \citep{jordan2024muon}, and recent analysis indicates that its advantage over gradient descent can depend on structural properties such as low-rank Hessians rather than holding universally \citep{shen2026muon}. This motivates studying how polar-factor updates can be integrated with infeasible Stiefel optimization, where feasibility must be controlled dynamically rather than enforced by a retraction.

Recent work has investigated spectral-norm-based first-order methods for optimization under orthonormality constraints. Practical Manifold Muon formulations solve a spectral-norm tangent subproblem through iterative inner methods, using dual ascent or the alternating direction method of multipliers (ADMM), followed by retraction \citep{bernstein2025manifolds,buchanan2025mmuonadmm}. Formal developments include SPEL \citep{yang2026manifold}, iMuon \citep{li2026intrinsic}, and Skewon \citep{solonko2026skewon}, which construct spectral tangent directions while maintaining or restoring feasibility. In parallel, \citet{zhang2026retraction} study retraction-free Stiefel optimization with a decaying projected-gradient step and a fixed step size for the normalizing component, but without a tangential update based on the polar factor of relative-gradient momentum.
To our knowledge, these spectral methods either keep the iterates on the manifold through a retraction or projection, or do not analyze how a polar-factor tangential step interacts with feasibility restoration. In particular, it remains unclear whether a polar-factor update of stochastic relative-gradient momentum can be combined with Landing-type feasibility restoration while still admitting explicit bounds on the orthogonality residual.

To address this gap, we study retraction-free optimization under orthonormality constraints using polar-factor updates, focusing on the interaction between objective decrease and feasibility restoration in deterministic and stochastic settings. Our contributions are threefold.

First, we propose \algname{}, a Landing algorithm with orthogonalized momentum that searches a neighborhood of the Stiefel manifold without requiring a retraction at every step. Inspired by Muon, \algname{} forms relative-gradient momentum, maps it to its compact polar factor, and combines the resulting tangential update with a step along the negative gradient of the orthogonality penalty. This differs from applying a spectral LMO to tangent or Riemannian directions, as in MACRO and MCSD \citep{an2026demystifying,yang2026manifold}. Because relative-gradient momentum and its polar factor are skew-symmetric, the directional derivative of the Gram map \(\Xbf\mapsto \Xbf^\top \Xbf\) along the tangential update vanishes, even away from the manifold. We establish deterministic and stochastic convergence guarantees for \algname{}, together with explicit bounds on the orthogonality residual.

Second, we introduce \algplus{}, a two-timescale extension motivated by a limitation of \algname{}: reducing its coupled step size simultaneously shrinks the tangential update used to decrease \(f\) and the step along the negative penalty gradient used to restore feasibility. Building on the use of separate tangential and normal steps in related work \citep{lu2026variance,zhang2026retraction}, \algplus{} retains a fixed normal step while allowing the tangential step to shrink. Our analysis shows that this separation, combined with the skew-symmetric polar-factor update, yields tighter orthogonality-residual bounds and a shorter feasibility-restoration phase from infeasible initialization than \algname{}, while preserving the leading stationarity rates. The corresponding convergence rates are summarized in Table~\ref{tab:coupled-separate-steps-comparison}.

Third, we evaluate the proposed methods on linear optimization over the Stiefel manifold, stochastic PCA, real-data ICA, and orthogonally constrained neural networks. The experiments support the predicted residual scalings and demonstrate the optimization benefits of polar-factor updates. In stochastic PCA, \algname{} approaches stationarity faster than the tested Landing-gradient and retraction-based baselines under matched computation budgets. Across the experiments, \algplus{} provides tighter feasibility control while retaining competitive optimization performance and computational efficiency.

\begin{table}[!hbp]
\centering
\setlength{\tabcolsep}{4pt}
\begin{tabular}{@{}llcc@{}}
\hline
Regime & Variant & Tail stationarity & Tail residual \\
\hline
Stochastic & \algname{} & $\mathcal O(T^{-1/4})$ & $\mathcal O(T^{-3/4})$ \\
Stochastic & \algplus{} & $\mathcal O(T^{-1/4})$ & $\mathcal O(T^{-3/2})$ \\
Deterministic & \algname{} & $\mathcal O(T^{-1/2})$ & $\mathcal O(T^{-1/2})$ \\
Deterministic & \algplus{} & $\mathcal O(T^{-1/2})$ & $\mathcal O(T^{-1})$ \\
\hline
\end{tabular}
\caption{Tail stationarity and feasibility rates for \algname{} and \algplus{}.}
\label{tab:coupled-separate-steps-comparison}
\end{table}

\section{Preliminaries}
\label{sec:preliminaries}

\paragraph{Notation.}
For \(\Abf,\Bbf\in\reals^{m\times q}\), \(\langle \Abf,\Bbf\rangle:=\operatorname{tr}(\Abf^\top \Bbf)\)
denotes the Frobenius inner product.
We denote by \(\|\cdot\|_F\), \(\|\cdot\|_{\mathrm{op}}\), and \(\|\cdot\|_*\)
the Frobenius norm, the operator norm, and the nuclear norm, respectively.
For a square matrix \(\Bbf\), \(\skewop(\Bbf):=\frac12(\Bbf-\Bbf^\top)\) and
\(\symop(\Bbf):=\frac12(\Bbf+\Bbf^\top)\) are its skew-symmetric and symmetric parts, which
are orthogonal under \(\langle\cdot,\cdot\rangle\).
We write \(\mathbb S^p\) for the symmetric \(p\times p\) matrices,
\(\operatorname{Skew}(n)\) for the skew-symmetric \(n\times n\) matrices,
and \(\Ibf_p\) for the identity matrix.
Bold symbols denote matrices. Matrix-valued maps are denoted by non-bold
symbols, such as \(C(\Xbf)\) and \(\Psi(\Xbf)\) below, and their values at an
iterate \(\Xbf_t\) are denoted by \(\Cbf_t\) and \(\Psibf_t\).
\([u]_+:=\max\{u,0\}\) denotes the positive part of a scalar \(u\).

To quantify infeasibility, we define two associated scalar measures
\(d(\Xbf):=\|C(\Xbf)\|_{\mathrm{op}}\) and \(N(\Xbf):=\frac14\|C(\Xbf)\|_F^2\), where \(C(\Xbf):=\Xbf^\top\Xbf-\Ibf_p\).
It is easy to verify that \(C(\Xbf)\in\mathbb S^p\), and
\(\frac14d(\Xbf)^2\le N(\Xbf)\le\frac p4d(\Xbf)^2\).
The gradient of the orthogonality penalty \(N\) is \(\nabla N(\Xbf)=\Xbf C(\Xbf)\). A step along \(-\nabla N(\Xbf)\) reduces \(N\) for a sufficiently small step size whenever \(\nabla N(\Xbf)\neq0\); we refer to \(\nabla N(\Xbf)\) as the normalizing component of the update direction.

Let \(\varepsilon\in(0,1)\) and define the safe region \(\mathcal S_\varepsilon:=\bcbra{\Xbf\in\mathbb R^{n\times p}:d(\Xbf)\le\varepsilon}\).
We further assume that $f(\Xbf)$ is well-defined on $\mathcal S_\varepsilon$, namely, 
$\mathcal S_\varepsilon\subseteq\mathcal U$.
The following lemma records the corresponding
spectral bounds.

\begin{lemma}[Bounds in the safe region]
\label{lem:safe-region-bounds}
For every \(\Xbf\in\mathcal S_\varepsilon\), the following bounds hold, where \(\sigma_i(\Xbf)\) denotes the \(i\)-th singular value of \(\Xbf\):
\((1-\varepsilon)\Ibf_p\preceq\Xbf^\top\Xbf\preceq(1+\varepsilon)\Ibf_p\), \(\sqrt{1-\varepsilon}\le\sigma_i(\Xbf)\le\sqrt{1+\varepsilon}\) for \(i=1,\ldots,p\), \(\|\Xbf\|_{\mathrm{op}}\le\sqrt{1+\varepsilon}\), and \(\|\Xbf\|_F\le\sqrt{p(1+\varepsilon)}\).
In particular, \(\Xbf\) has full column rank and
\(\mathcal S_\varepsilon\) is compact.
\end{lemma}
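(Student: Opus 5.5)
The plan is to read everything off the spectral decomposition of the symmetric matrix \(C(\Xbf)=\Xbf^\top\Xbf-\Ibf_p\). For \(\Xbf\in\mathcal S_\varepsilon\) we have \(\|C(\Xbf)\|_{\mathrm{op}}\le\varepsilon\). Since \(C(\Xbf)\in\mathbb S^p\), its operator norm equals the largest absolute eigenvalue, so all eigenvalues of \(C(\Xbf)\) lie in \([-\varepsilon,\varepsilon]\). Equivalently, \(-\varepsilon\Ibf_p\preceq C(\Xbf)\preceq\varepsilon\Ibf_p\). Adding \(\Ibf_p\) gives the Loewner bounds \((1-\varepsilon)\Ibf_p\preceq\Xbf^\top\Xbf\preceq(1+\varepsilon)\Ibf_p\).

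Next, the singular values of \(\Xbf\) (with \(n\ge p\), so there are \(p\) of them) are the square roots of the eigenvalues of \(\Xbf^\top\Xbf\). These eigenvalues lie in \([1-\varepsilon,1+\varepsilon]\), so \(\sqrt{1-\varepsilon}\le\sigma_i(\Xbf)\le\sqrt{1+\varepsilon}\). Because \(\varepsilon<1\), the lower bound is strictly positive, so all \(p\) singular values are nonzero and \(\Xbf\) has full column rank. The operator norm bound follows from \(\|\Xbf\|_{\mathrm{op}}=\sigma_1(\Xbf)\). The Frobenius bound follows from \(\|\Xbf\|_F^2=\sum_{i=1}^p\sigma_i(\Xbf)^2=\operatorname{tr}(\Xbf^\top\Xbf)\le p(1+\varepsilon)\).

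For compactness, \(\mathcal S_\varepsilon\) is bounded by the Frobenius bound just proved. It is closed because it is the preimage of \([0,\varepsilon]\) under the continuous map \(\Xbf\mapsto\|\Xbf^\top\Xbf-\Ibf_p\|_{\mathrm{op}}\), which is a composition of a polynomial map and a norm. Heine--Borel in the finite-dimensional space \(\reals^{n\times p}\) then gives compactness.

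There is no real obstacle. The only point needing care is the first step: the identity \(\|C\|_{\mathrm{op}}=\max_i|\lambda_i(C)|\) relies on the symmetry of \(C(\Xbf)\), which the paper has already noted.
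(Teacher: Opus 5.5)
Your proposal is correct and follows the same route as the paper. The operator-norm condition on the symmetric matrix \(C(\Xbf)\) gives the Loewner sandwich \((1-\varepsilon)\Ibf_p\preceq\Xbf^\top\Xbf\preceq(1+\varepsilon)\Ibf_p\), which yields the singular-value, norm and full-rank claims, and compactness follows from closedness and boundedness; you simply make explicit the details the paper leaves implicit, namely \(\|C\|_{\mathrm{op}}=\max_i|\lambda_i(C)|\) by symmetry and the preimage argument for closedness.
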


The proof follows from the spectral characterization of the operator
norm and is deferred to Appendix~\ref{app:shared-safe-region}.
For any \(\Xbf\in\mathcal S_\varepsilon\), let \(\mathcal M_{\Xbf}:=\bcbra{\Ybf\in\mathbb R^{n\times p}:\Ybf^\top\Ybf=\Xbf^\top\Xbf}\).
By Lemma~\ref{lem:safe-region-bounds},
\(\Xbf^\top \Xbf\succeq(1-\varepsilon)\Ibf_p\succ0\).
Propositions~1 and~2 of \citet{gao2022optimization} imply that
\(\mathcal M_{\Xbf}\) is a closed embedded submanifold and that, for every
\(\Ybf\in\mathcal M_{\Xbf}\),
\(T_{\Ybf}\mathcal M_{\Xbf}=\{\Zbf\in\reals^{n\times p}:\Ybf^\top\Zbf+\Zbf^\top\Ybf=0\}=\{\Omegabf\Ybf:\Omegabf\in\operatorname{Skew}(n)\}\)
is its tangent space at \(\Ybf\). The normal space at \(\Ybf\) under the Frobenius inner product is \(N_{\Ybf}\mathcal M_{\Xbf}=\{\Ybf \Sbf:\Sbf\in\mathbb S^p\}\).
At \(\Ybf=\Xbf\), these identities give \(\Omegabf\Xbf\in T_{\Xbf}\mathcal M_{\Xbf}\) and \(\Xbf C(\Xbf)\in N_{\Xbf}\mathcal M_{\Xbf}\), hence \(\langle\Omegabf\Xbf,\Xbf C(\Xbf)\rangle=0\) for every \(\Omegabf\in\operatorname{Skew}(n)\).

Following \citet{pmlr-v151-ablin22a}, we define the \emph{relative gradient} for each \(\Xbf \in \mathcal U\) as follows:
\(\Psi(\Xbf):=\skewop(\nabla f(\Xbf)\Xbf^\top)\in\operatorname{Skew}(n)\).
For \(\Xbf\in\mathcal S_\varepsilon\), under the canonical metric used by \citet[Proposition~4]{gao2022optimization},
the Riemannian gradient of \(f\) on \(\mathcal M_{\Xbf}\) at \(\Xbf\) is
\(2\Psi(\Xbf)\Xbf\). Following the convention of
\citet{ablin2024infeasible}, we rescale this metric by a factor of two and
define the Riemannian gradient at \(\Xbf\) by
\(\grad f(\Xbf):=\Psi(\Xbf)\Xbf\).

We use \(\|\Psi(\Xbf)\|_*\) as the primary stationarity measure and 
 \(d(\Xbf)\) as the feasibility measure. 
It is easy to verify that, for any \(\Xbf\in\operatorname{St}(n,p)\),
\(\|\grad f(\Xbf)\|_F=\|\Psi(\Xbf)\Xbf\|_F\le\|\Psi(\Xbf)\|_F\le\|\Psi(\Xbf)\|_*\).
Specifically, nuclear-norm stationarity ($\Psi(\Xbf)=0$) implies the usual Riemannian stationarity ($\grad f(\Xbf)=0$) at feasible points. 
The derivations of the corresponding identities can be found in Appendix~\ref{app:shared-level-set-identities}.

For \(\Hbf\in\mathbb R^{n\times n}\) with
\(r:=\operatorname{rank}(\Hbf)>0\), let
\(\Hbf=\Ubf_r\Sigmabf_r\Vbf_r^\top\) be the compact singular value
decomposition (SVD), where \(\Ubf_r,\Vbf_r\in\mathbb R^{n\times r}\)
have orthonormal columns and
\(\Sigmabf_r\in\mathbb R^{r\times r}\) contains the positive singular
values on its diagonal. We define the compact polar factor by
\(\cpolar(\Hbf):=\Ubf_r\Vbf_r^\top\), with \(\cpolar(0):=0\).
The following lemma summarizes its norm, alignment, and
skew-symmetry properties.
\begin{lemma}[Compact polar factor properties]
\label{lem:matrix-sign-properties}
Fix \(\Hbf\in\mathbb R^{n\times n}\).
Then
\[
\|\cpolar(\Hbf)\|_{\mathrm{op}}\le1,
\qquad
\langle \Hbf,\cpolar(\Hbf)\rangle=\|\Hbf\|_*,
\qquad
\cpolar(\Hbf)\in\underset{\|\Abf\|_{\mathrm{op}}\le1}{\arg\max}\langle \Hbf,\Abf\rangle.
\]
If \(\Hbf\in\operatorname{Skew}(n)\), then
\(\cpolar(\Hbf)\in\operatorname{Skew}(n)\), and
\[
\cpolar(\Hbf)\in
\underset{\Abf\in\operatorname{Skew}(n),\,\|\Abf\|_{\mathrm{op}}\le1}{\arg\max}
\langle \Hbf,\Abf\rangle.
\]
\end{lemma}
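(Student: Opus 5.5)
The plan is to work directly from the compact SVD \(\Hbf=\Ubf_r\Sigmabf_r\Vbf_r^\top\) and set \(\Pbf:=\cpolar(\Hbf)=\Ubf_r\Vbf_r^\top\). The case \(\Hbf=0\) is trivial, since \(\cpolar(0)=0\) satisfies every claim. For \(\Hbf\ne0\), the factors \(\Ubf_r\) and \(\Vbf_r\) have orthonormal columns, so \(\Pbf^\top\Pbf=\Vbf_r\Vbf_r^\top\) is an orthogonal projector. Its eigenvalues lie in \(\{0,1\}\), which gives \(\|\Pbf\|_{\mathrm{op}}\le1\).

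For the alignment identity, I would compute
\[
\langle\Hbf,\Pbf\rangle=\operatorname{tr}(\Vbf_r\Sigmabf_r\Ubf_r^\top\Ubf_r\Vbf_r^\top)=\operatorname{tr}(\Sigmabf_r)=\|\Hbf\|_*,
\]
using \(\Ubf_r^\top\Ubf_r=\Vbf_r^\top\Vbf_r=\Ibf_r\) and the cyclicity of the trace. Maximality then follows from the duality between the operator and nuclear norms: for any \(\Abf\) with \(\|\Abf\|_{\mathrm{op}}\le1\),
\[
\langle\Hbf,\Abf\rangle=\sum_i\sigma_i\,\mathbf u_i^\top\Abf\mathbf v_i\le\sum_i\sigma_i=\|\Hbf\|_*,
\]
because \(|\mathbf u_i^\top\Abf\mathbf v_i|\le\|\Abf\|_{\mathrm{op}}\). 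Since \(\Pbf\) attains this bound, it belongs to the argmax.

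The skew-symmetric case is the main obstacle, because the compact SVD is not unique when singular values are repeated. I would avoid uniqueness issues by showing that \(\cpolar(\Hbf)\) does not depend on the choice of compact SVD. The cleanest route is the identity \(\cpolar(\Hbf)=\Hbf\,(\Hbf^\top\Hbf)^{\dagger/2}\), where \((\Hbf^\top\Hbf)^{\dagger/2}\) is the Moore--Penrose pseudo-inverse square root \(\Vbf_r\Sigmabf_r^{-1}\Vbf_r^\top\). To see this identity, note
\[
\Hbf\Vbf_r\Sigmabf_r^{-1}\Vbf_r^\top=\Ubf_r\Sigmabf_r\Sigmabf_r^{-1}\Vbf_r^\top=\Ubf_r\Vbf_r^\top .
\]
The right-hand side \(\Hbf(\Hbf^\top\Hbf)^{\dagger/2}\) is intrinsically defined, so the polar factor is well defined.

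Next I would use the fact that \((\Abf^\top)^\top\Abf^\top=\Abf\Abf^\top\). Transposing a compact SVD gives the compact SVD \(\Hbf^\top=\Vbf_r\Sigmabf_r\Ubf_r^\top\), so \(\cpolar(\Hbf^\top)=\Vbf_r\Ubf_r^\top=\cpolar(\Hbf)^\top\). The polar map is also positively homogeneous with degree zero, and \(\cpolar(-\Hbf)=-\cpolar(\Hbf)\), since negating \(\Ubf_r\) in the SVD of \(\Hbf\) gives an SVD of \(-\Hbf\). If \(\Hbf^\top=-\Hbf\), these facts combine as
\[
\cpolar(\Hbf)^\top=\cpolar(\Hbf^\top)=\cpolar(-\Hbf)=-\cpolar(\Hbf),
\]
so \(\cpolar(\Hbf)\in\operatorname{Skew}(n)\).

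Finally, the restricted argmax follows immediately. The set \(\{\Abf\in\operatorname{Skew}(n):\|\Abf\|_{\mathrm{op}}\le1\}\) is contained in the unrestricted ball, so its maximum value is at most \(\|\Hbf\|_*\). The matrix \(\cpolar(\Hbf)\) is feasible for this smaller set and attains the value \(\|\Hbf\|_*\), so it is a maximizer.
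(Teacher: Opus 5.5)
Your proof is correct. The first half matches the paper: the compact SVD, the trace computation \(\langle\Hbf,\cpolar(\Hbf)\rangle=\operatorname{tr}(\Sigmabf_r)=\|\Hbf\|_*\), and the duality bound. You prove that bound directly, whereas the paper cites nuclear--operator duality.

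The skew-symmetric part takes a different route.

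\emph{Paper's route.} It writes \(\cpolar(\Hbf)=\Hbf(\Hbf^\top\Hbf)^{\dagger/2}\) and notes that \(\Hbf^\top\Hbf=-\Hbf^2\). Hence \(\Hbf\) commutes with the spectral function \((\Hbf^\top\Hbf)^{\dagger/2}\), and transposing gives \(\cpolar(\Hbf)^\top=(\Hbf^\top\Hbf)^{\dagger/2}\Hbf^\top=-\Hbf(\Hbf^\top\Hbf)^{\dagger/2}=-\cpolar(\Hbf)\) in one line.

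\emph{Your route.} You use the same formula only to show that \(\cpolar\) does not depend on the chosen compact SVD. Skew-symmetry then follows from two equivariances, \(\cpolar(\Hbf^\top)=\cpolar(\Hbf)^\top\) and \(\cpolar(-\Hbf)=-\cpolar(\Hbf)\), each read off by manipulating a single SVD.

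\emph{What each buys.} Your route does need the well-definedness step, and you correctly supply it. For skew \(\Hbf\), the transposed decomposition \(\Vbf_r\Sigmabf_r\Ubf_r^\top\) and the negated decomposition \((-\Ubf_r)\Sigmabf_r\Vbf_r^\top\) are two different compact SVDs of the same matrix \(\Hbf^\top=-\Hbf\). The paper's route never compares two SVDs, so it needs no separate well-definedness argument. In exchange, your version is more self-contained. It verifies the pseudo-inverse identity, which the paper only asserts. It also avoids the auxiliary fact that \(\Hbf\) commutes with every spectral function of \(\Hbf^\top\Hbf\), e.g.\ via a polynomial interpolant of \(-\Hbf^2\). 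As a by-product, it shows \(\cpolar\) maps symmetric matrices to symmetric ones. The paper's argument is shorter once that commutation fact is granted.
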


The following assumptions are used in the convergence analyses of our proposed methods. The smoothness condition relates the nuclear norm of the gradient difference to the operator norm of the displacement, and the stochastic oracle condition is needed only for the stochastic results.

\begin{assumption}[Smoothness]
\label{ass:operator-nuclear-smoothness}
There exist an open convex set
\(\mathcal U_\varepsilon\) satisfying
\(\mathcal S_\varepsilon\subseteq\mathcal U_\varepsilon\subseteq
\mathcal U\) and a constant \(L_*>0\) such that
\[
\|\nabla f(\Xbf)-\nabla f(\Ybf)\|_*
\le
L_*\|\Xbf-\Ybf\|_{\mathrm{op}},
\qquad
\Xbf,\Ybf\in\mathcal U_\varepsilon.
\]
\end{assumption}

\begin{assumption}[Stochastic first-order oracle]
\label{ass:stochastic-oracle}
There exists \(\sigma\ge0\) such that, for every
\(\Xbf\in\mathcal S_\varepsilon\) and \(\xi\sim\mathcal D\),
\[
\mathbb E_{\xi}[\nabla f(\Xbf;\xi)]
=
\nabla f(\Xbf),
\qquad
\mathbb E_{\xi}
\bsbra{
\|\nabla f(\Xbf;\xi)-\nabla f(\Xbf)\|_F^2
}
\le
\sigma^2.
\]
\end{assumption}

The following proposition collects the estimates used to analyze objective descent, momentum tracking, and feasibility for both algorithms.

\begin{proposition}
\label{prop:shared-basic-estimates}
Suppose Assumptions~\ref{ass:operator-nuclear-smoothness}
and~\ref{ass:stochastic-oracle} hold.
Then the following statements hold.
\begin{enumerate}[label=\textnormal{(\roman*)},ref=\textnormal{(\roman*)},leftmargin=*,widest=iii]
\item\label{prop:shared-basic-estimates-descent}
For all \(\Xbf,\Ybf\in\mathcal S_\varepsilon\),
\[
f(\Ybf)
\le
f(\Xbf)
+
\langle\nabla f(\Xbf),\Ybf-\Xbf\rangle
+
\frac{L_*}{2}\|\Ybf-\Xbf\|_{\mathrm{op}}^2.
\]

\item\label{prop:shared-basic-estimates-psi-lipschitz}
Let \(G_\varepsilon:=\max_{\Xbf\in\mathcal S_\varepsilon}\|\nabla f(\Xbf)\|_*\) and \(L_{\Psi,\varepsilon}:=L_*\sqrt{1+\varepsilon}+G_\varepsilon\). Then \(\|\Psi(\Xbf)-\Psi(\Ybf)\|_*\le L_{\Psi,\varepsilon}\|\Xbf-\Ybf\|_{\mathrm{op}}\) for all \(\Xbf,\Ybf\in\mathcal S_\varepsilon\).

\item\label{prop:shared-basic-estimates-sampling}
For any fixed \(\Xbf\in\mathcal S_\varepsilon\), draw \(\xi_1,\ldots,\xi_B\overset{\mathrm{iid}}{\sim}\mathcal D\) and define \(\Gbf^{(B)}:=B^{-1}\sum_{i=1}^B\nabla f(\Xbf;\xi_i)\) and \(\Psibf^{(B)}:=\skewop(\Gbf^{(B)}\Xbf^\top)\). With expectations taken over these samples, \(\mathbb E[\Gbf^{(B)}]=\nabla f(\Xbf)\), \(\mathbb E[\Psibf^{(B)}]=\Psi(\Xbf)\), and
\[
\mathbb E\|\Gbf^{(B)}-\nabla f(\Xbf)\|_F^2\le\frac{\sigma^2}{B},
\qquad
\mathbb E\|\Psibf^{(B)}-\Psi(\Xbf)\|_F^2\le(1+\varepsilon)\frac{\sigma^2}{B}.
\]
Consequently, \(\mathbb E\|\Psibf^{(B)}-\Psi(\Xbf)\|_*\le\sigma\sqrt{n(1+\varepsilon)/B}\).

\item\label{prop:shared-basic-estimates-coupling}
Let \(G^{\mathrm{sym}}_\varepsilon:=\max_{\Xbf\in\mathcal S_\varepsilon}\|\symop(\Xbf^\top\nabla f(\Xbf))\|_*\). Then \(|\langle\nabla f(\Xbf),\Xbf C(\Xbf)\rangle|\le G^{\mathrm{sym}}_\varepsilon d(\Xbf)\) for all \(\Xbf\in\mathcal S_\varepsilon\).
\end{enumerate}
The constants \(G_\varepsilon\) and \(G^{\mathrm{sym}}_\varepsilon\) are finite by continuity of \(\nabla f\) and compactness of \(\mathcal S_\varepsilon\).
\end{proposition}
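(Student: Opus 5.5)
The plan is to prove the four items separately. Each one reduces to a standard estimate combined with the spectral bounds of Lemma~\ref{lem:safe-region-bounds} and the duality between the operator and nuclear norms, \(|\langle\Abf,\Bbf\rangle|\le\|\Abf\|_*\|\Bbf\|_{\mathrm{op}}\).

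\textbf{Item (i).} For \(\Xbf,\Ybf\in\mathcal S_\varepsilon\), the segment \(\Xbf+s(\Ybf-\Xbf)\) lies in the convex open set \(\mathcal U_\varepsilon\), because \(\mathcal S_\varepsilon\subseteq\mathcal U_\varepsilon\). I would write \(f(\Ybf)-f(\Xbf)-\langle\nabla f(\Xbf),\Ybf-\Xbf\rangle\) as \(\int_0^1\langle\nabla f(\Xbf+s(\Ybf-\Xbf))-\nabla f(\Xbf),\Ybf-\Xbf\rangle\,ds\). Applying the nuclear/operator duality and Assumption~\ref{ass:operator-nuclear-smoothness} bounds the integrand by \(L_*s\|\Ybf-\Xbf\|_{\mathrm{op}}^2\), and integrating gives the factor \(L_*/2\). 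Note that the segment need not stay in \(\mathcal S_\varepsilon\); this is exactly why the assumption is posed on the convex set \(\mathcal U_\varepsilon\).

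\textbf{Item (ii).} Since \(\skewop\) is an average of a matrix and its transpose, it is nonexpansive in the nuclear norm. So it suffices to bound \(\|\nabla f(\Xbf)\Xbf^\top-\nabla f(\Ybf)\Ybf^\top\|_*\). I would split this as
\[
\big(\nabla f(\Xbf)-\nabla f(\Ybf)\big)\Xbf^\top+\nabla f(\Ybf)(\Xbf-\Ybf)^\top ,
\]
and use \(\|\Abf\Bbf\|_*\le\|\Abf\|_*\|\Bbf\|_{\mathrm{op}}\). The first term is at most \(L_*\|\Xbf-\Ybf\|_{\mathrm{op}}\sqrt{1+\varepsilon}\) by the smoothness assumption and Lemma~\ref{lem:safe-region-bounds}. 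The second is at most \(G_\varepsilon\|\Xbf-\Ybf\|_{\mathrm{op}}\). Adding the two gives \(L_{\Psi,\varepsilon}\).

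\textbf{Item (iii).} Unbiasedness follows from linearity of expectation, and because \(\Xbf\) is fixed the map \(\Gbf\mapsto\skewop(\Gbf\Xbf^\top)\) is linear. The \(\sigma^2/B\) bound is the usual variance of an average of i.i.d.\ zero-mean terms: the cross terms vanish, and Assumption~\ref{ass:stochastic-oracle} bounds each diagonal term. For \(\Psibf^{(B)}\), I would use that \(\skewop\) is a Frobenius-norm contraction and \(\|\Abf\Xbf^\top\|_F\le\|\Abf\|_F\|\Xbf\|_{\mathrm{op}}\le\sqrt{1+\varepsilon}\|\Abf\|_F\). The nuclear-norm bound then follows from \(\|\Abf\|_*\le\sqrt{n}\|\Abf\|_F\) for \(n\times n\) matrices together with Jensen's inequality.

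\textbf{Item (iv).} This is the only step with a small twist. I would write
\[
\langle\nabla f(\Xbf),\Xbf C(\Xbf)\rangle=\langle\Xbf^\top\nabla f(\Xbf),C(\Xbf)\rangle .
\]
Since \(C(\Xbf)\) is symmetric, only the symmetric part of \(\Xbf^\top\nabla f(\Xbf)\) contributes, so this equals \(\langle\symop(\Xbf^\top\nabla f(\Xbf)),C(\Xbf)\rangle\). Duality then bounds it by \(\|\symop(\Xbf^\top\nabla f(\Xbf))\|_*\,d(\Xbf)\le G^{\mathrm{sym}}_\varepsilon d(\Xbf)\).

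\textbf{Finiteness of the constants.} \(G_\varepsilon\) and \(G^{\mathrm{sym}}_\varepsilon\) are finite because \(\nabla f\) is continuous on \(\mathcal U_\varepsilon\) (it is Lipschitz by Assumption~\ref{ass:operator-nuclear-smoothness}) and \(\mathcal S_\varepsilon\) is compact by Lemma~\ref{lem:safe-region-bounds}.

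I do not expect a genuine obstacle. The points needing care are routing the segment in (i) through \(\mathcal U_\varepsilon\), and keeping the norm pairing consistent in (ii), with the nuclear norm on the gradient factor and the operator norm on the \(\Xbf\) factor.
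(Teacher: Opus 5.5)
Your proposal is correct and follows the paper's proof item by item. You use the same ingredients: the fundamental theorem of calculus along the segment in the convex set $\mathcal U_\varepsilon$, the same split $\bigl(\nabla f(\Xbf)-\nabla f(\Ybf)\bigr)\Xbf^\top+\nabla f(\Ybf)(\Xbf-\Ybf)^\top$ together with nuclear-norm contractivity of $\skewop$, the i.i.d.\ variance computation combined with $\|\Abf\|_*\le\sqrt n\|\Abf\|_F$, and the reduction to $\symop(\Xbf^\top\nabla f(\Xbf))$ followed by nuclear--operator duality.
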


The proof is given in Appendix~\ref{app:shared-basic-estimates-proof}.

\section{\algname{}: Landing with Orthogonalized Momentum}
\label{sec:algorithmic-framework}

Algorithm~\ref{alg:landom} presents \algname{}, which combines polar-factor momentum updates with a normal correction weighted by \(\lambda>0\), using a common step size.

At each iteration, Algorithm~\ref{alg:landom} draws a mini-batch of iid samples $\xi_{t,1},\ldots,\xi_{t,B}\sim\mathcal D$, independently of the initialization and all previous samples, computes the corresponding mini-batch gradient \(\Gbf_t^{(B)}\), and constructs the stochastic relative gradient \(\Psibf_t^{(B)}\).
This stochastic relative gradient is then incorporated into the exponential moving average (EMA) \(\Hbf_t\) using the momentum parameter \(\beta \in [0,1)\).
We denote by \(\Psibf_t := \Psi(\Xbf_t)\) the full relative gradient at iteration \(t\).
Because the stochastic relative gradients are skew-symmetric and the space \(\operatorname{Skew}(n)\) is closed under linear combinations, the exponential moving average \(\Hbf_t\) also remains skew-symmetric throughout the iterations.
By applying the compact polar map, we obtain \(\Omegabf_t \in \operatorname{Skew}(n)\), which satisfies \(\|\Omegabf_t\|_{\mathrm{op}} \leq 1\) in accordance with Lemma~\ref{lem:matrix-sign-properties}.
Consequently, the product \(\Omegabf_t \Xbf_t\) lies in the tangent space \(T_{\Xbf_t}\mathcal M_{\Xbf_t}\).

To complete the update, we add the gradient of the orthogonality penalty, weighted by \(\lambda\), to this tangential direction. Writing \(\Cbf_t:=C(\Xbf_t)\), we obtain
\begin{equation}
\label{eq:coupled-direction}
\Dbf_t:=\Omegabf_t\Xbf_t+\lambda \Xbf_t\Cbf_t,
\qquad
\Xbf_{t+1}=\Xbf_t-\tau_t\Dbf_t.
\end{equation}
The normal component \(\Xbf_t\Cbf_t=\nabla N(\Xbf_t)\) lies in \(N_{\Xbf_t}\mathcal M_{\Xbf_t}\), and the corresponding step sizes in the tangential and normal directions are given by \(\tau_t\) and \(\lambda\tau_t\), respectively.
We choose \(\tau_t\) as the minimum of two quantities: the target step size \(\eta_t^{\mathrm{tar}}>0\), which is chosen to promote convergence, and the safeguarding step size \(\eta_t^{\mathrm{safe}}=\eta_\varepsilon^{\mathrm{safe}}(d_t,m_t;\lambda)\), a function of the orthogonality residual \(d_t=\|\Cbf_t\|_{\mathrm{op}}\) and the direction magnitude \(m_t=\|\Dbf_t\|_{\mathrm{op}}\), which keeps the iterates in \(\mathcal S_\varepsilon\).
Details regarding the specific schedules for these step sizes are provided in Section~\ref{sec:coupled-convergence}. In the deterministic setting, \algname{} uses exact gradients with any fixed \(\beta\in[0,1)\); choosing \(\beta=0\) yields the no-momentum variant.

\begin{algorithm}[t]
\caption{\algname{}}
\label{alg:landom}
\begin{algorithmic}[1]
\Require $\Xbf_0\in\mathcal S_\varepsilon$, horizon $T$, batch size $B$,
momentum $\beta\in[0,1)$, penalty weight $\lambda>0$,
and target step sizes $\{\eta_t^{\mathrm{tar}}\}$
\For{$t=0,\ldots,T-1$}
    \State Draw $\xi_{t,1},\ldots,\xi_{t,B}\overset{\mathrm{iid}}{\sim}\mathcal D$  
    \State compute $\Gbf_t^{(B)}\gets B^{-1}\sum_{i=1}^B\nabla f(\Xbf_t;\xi_{t,i})$
    \State $\Psibf_t^{(B)}\gets\skewop(\Gbf_t^{(B)}\Xbf_t^\top)$
    \State $\Hbf_t\gets\Psibf_t^{(B)}$ if $t=0$; otherwise
    $\Hbf_t\gets\beta \Hbf_{t-1}+(1-\beta)\Psibf_t^{(B)}$
    \State $\Omegabf_t\gets\cpolar(\Hbf_t)$
    \label{line:coupled-msign}
    \State $\Cbf_t\gets \Xbf_t^\top \Xbf_t-\Ibf_p$, $d_t\gets\|\Cbf_t\|_{\mathrm{op}}$
    \label{line:coupled-residual}
    \State $\Dbf_t\gets\Omegabf_t\Xbf_t+\lambda \Xbf_t\Cbf_t$,
    $m_t\gets\|\Dbf_t\|_{\mathrm{op}}$
    \State $\eta_t^{\mathrm{safe}}\gets
    \eta_\varepsilon^{\mathrm{safe}}(d_t,m_t;\lambda)$
    \label{line:coupled-local-safeguard}
    \State $\tau_t\gets\min\{\eta_t^{\mathrm{tar}},\eta_t^{\mathrm{safe}}\}$
    \label{line:coupled-actual-step}
    \State $\Xbf_{t+1}\gets \Xbf_t-\tau_t\Dbf_t$
    \label{line:coupled-update}
\EndFor
\State \Return $\Xbf_T$
\end{algorithmic}
\end{algorithm}

\begin{remark}
\label{rem:skew-msign-order}
Since \(\Hbf_t\in\operatorname{Skew}(n)\), Lemma~\ref{lem:matrix-sign-properties} ensures that \(\Omegabf_t=\cpolar(\Hbf_t)\) is skew-symmetric, satisfies \(\|\Omegabf_t\|_{\mathrm{op}}\le1\), and achieves the exact alignment \(\langle\Hbf_t,\Omegabf_t\rangle=\|\Hbf_t\|_*\). Consequently,
\begin{equation}
\label{eq:relative-gradient-polar-alignment}
\langle\Psibf_t,\Omegabf_t\rangle
\ge \|\Psibf_t\|_*-2\|\Hbf_t-\Psibf_t\|_*.
\end{equation}
Hence, the compact polar factor preserves the nuclear-norm alignment up to the tracking error \(\|\Hbf_t-\Psibf_t\|_*\); exact alignment is recovered when \(\Hbf_t=\Psibf_t\). The convergence analysis therefore reduces to controlling this tracking error. This also motivates applying the skew projection before the compact polar factor, since reversing the order does not, in general, retain the exact nuclear-norm alignment with the skew component. The alignment estimate is proved in Appendix~\ref{app:momentum-alignment-proof}.
\end{remark}

\section{Convergence of \algname{}}
\label{sec:coupled-convergence}

Under the assumptions in Section~\ref{sec:preliminaries}, we first establish safe-region invariance and a pathwise recursion for the orthogonality residual, and then derive finite-time stationarity and feasibility guarantees.

\subsection{Safe-Region Invariance and Feasibility Dynamics}
\label{subsec:coupled-feasibility}

Expanding the update in Algorithm~\ref{alg:landom} and using \(\Omegabf_t\in\operatorname{Skew}(n)\) gives, pathwise,
\begin{equation}
\label{eq:coupled-gram-identity}
\Cbf_{t+1}=\Cbf_t-2\lambda\tau_t(\Cbf_t+\Cbf_t^2)+\tau_t^2\Dbf_t^\top\Dbf_t.
\end{equation}
By skew-symmetry, the terms linear in \(\tau_t\) arising from the tangential component cancel, and the remaining linear term comes entirely from the orthogonality penalty gradient. In contrast, for the residual \(\Xbf\Xbf^\top-\Ibf_n\) used in the discrete analysis of \citet{pmlr-v151-ablin22a}, a tangential direction \(\Abf\Xbf\) with \(\Abf\in\operatorname{Skew}(n)\) contributes the first-order commutator term \([\Abf,\Xbf\Xbf^\top-\Ibf_n]\), which does not vanish in general. The cancellation in \eqref{eq:coupled-gram-identity} therefore comes from the skew-symmetric left action combined with \(C(\Xbf)=\Xbf^\top\Xbf-\Ibf_p\), not from the compact polar map.

Recall that \(d_t:=\|\Cbf_t\|_{\mathrm{op}}\) and \(m_t:=\|\Dbf_t\|_{\mathrm{op}}\). For \(d\in[0,\varepsilon]\) and \(m\ge0\), define
\begin{equation}
\label{eq:coupled-safeguard}
\eta_{\varepsilon}^{\mathrm{safe}}(d,m;\lambda):=
\begin{cases}
\displaystyle\min\left\{
\frac{\lambda d(1-d)+\sqrt{\lambda^2d^2(1-d)^2+m^2(\varepsilon-d)}}{m^2},
\frac1{2\lambda}\right\}, & m>0,\\[1ex]
\displaystyle\frac1{2\lambda}, & m=0.
\end{cases}
\end{equation}
A uniform lower bound is obtained from the constants
\begin{equation}
\label{eq:coupled-uniform-safe-threshold}
M_\varepsilon:=\sqrt{1+\varepsilon}(1+\lambda\varepsilon),\qquad
\eta_\varepsilon^\star:=\min\left\{\frac1{2\lambda},\frac{\sqrt{\varepsilon/2}}{M_\varepsilon},\frac{\lambda\varepsilon(1-\varepsilon)}{M_\varepsilon^2}\right\}.
\end{equation}

\begin{lemma}[Safeguard and safe-region invariance]
\label{lem:coupled-safeguard}
Let \(\Xbf\in\mathcal S_\varepsilon\) and \(\Omegabf\in\operatorname{Skew}(n)\), and define \(\Cbf:=\Xbf^\top\Xbf-\Ibf_p\), \(\Dbf:=\Omegabf\Xbf+\lambda\Xbf\Cbf\), \(d:=\|\Cbf\|_{\mathrm{op}}\), and \(m:=\|\Dbf\|_{\mathrm{op}}\). Then every \(0\le\tau\le\eta_\varepsilon^{\mathrm{safe}}(d,m;\lambda)\) satisfies
\[
\|(\Xbf-\tau\Dbf)^\top(\Xbf-\tau\Dbf)-\Ibf_p\|_{\mathrm{op}}\le\varepsilon.
\]
For the directions generated by Algorithm~\ref{alg:landom}, \(\Omegabf_t=\cpolar(\Hbf_t)\) is skew-symmetric and satisfies \(\|\Omegabf_t\|_{\mathrm{op}}\le1\). Hence, whenever \(\Xbf_t\in\mathcal S_\varepsilon\), we have \(m_t\le M_\varepsilon\), \(\eta_t^{\mathrm{safe}}\ge\eta_\varepsilon^\star\), and \(\Xbf_{t+1}\in\mathcal S_\varepsilon\) pathwise. Consequently, \(\Xbf_0\in\mathcal S_\varepsilon\) implies \(\Xbf_t\in\mathcal S_\varepsilon\) for all \(t\ge0\).
\end{lemma}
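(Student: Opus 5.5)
The plan is to obtain an exact expression for the Gram residual after one step, bound its operator norm by a scalar quadratic in \(\tau\), and observe that \(\eta_\varepsilon^{\mathrm{safe}}\) is exactly the positive root of that quadratic, capped at \(1/(2\lambda)\).

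\emph{Step 1 (Gram identity).} Expand \((\Xbf-\tau\Dbf)^\top(\Xbf-\tau\Dbf)-\Ibf_p=\Cbf-\tau(\Xbf^\top\Dbf+\Dbf^\top\Xbf)+\tau^2\Dbf^\top\Dbf\). Since \(\Omegabf\) is skew, \(\Xbf^\top\Omegabf\Xbf\) is skew, so it cancels in the symmetrized term. Using \(\Xbf^\top\Xbf\Cbf=(\Ibf_p+\Cbf)\Cbf\), the first-order term is therefore \(2\lambda(\Cbf+\Cbf^2)\). This reproduces \eqref{eq:coupled-gram-identity}:
\[
\Cbf^+=\Cbf-2\lambda\tau(\Cbf+\Cbf^2)+\tau^2\Dbf^\top\Dbf .
\]
By the triangle inequality,
\[
\|\Cbf^+\|_{\mathrm{op}}\le \|g(\Cbf)\|_{\mathrm{op}}+\tau^2m^2,
\qquad g(c):=c\bigl(1-2\lambda\tau(1+c)\bigr).
\]

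\emph{Step 2 (spectral bound on \(g(\Cbf)\)).} The matrix \(g(\Cbf)\) is a polynomial in the symmetric matrix \(\Cbf\), and the eigenvalues \(c\) of \(\Cbf\) lie in \([-d,d]\) with \(d\le\varepsilon<1\). So it suffices to show \(|g(c)|\le d-2\lambda\tau d(1-d)\) for all such \(c\) whenever \(\tau\le 1/(2\lambda)\). I split into three cases.
\begin{itemize}
\item For \(c<0\), the factor \(1-2\lambda\tau(1+c)\) lies in \([0,1]\). Writing \(s=|c|\), we get \(|g|=s(1-2\lambda\tau(1-s))\), which is increasing in \(s\) and hence at most \(d(1-2\lambda\tau(1-d))\).
\item For \(c\ge0\) with a nonnegative factor, \(g(c)\le c(1-2\lambda\tau)\le d-2\lambda\tau d(1-d)\).
\item For \(c\ge0\) with a negative factor, \(2\lambda\tau\le1\) gives \(|g|\le c\cdot c\le d^2\). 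In turn \(d^2\le d-2\lambda\tau d(1-d)\), again because \(2\lambda\tau\le1\).
\end{itemize}

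Combining Steps 1 and 2, \(\|\Cbf^+\|_{\mathrm{op}}\le\varepsilon\) holds as soon as
\[
q(\tau):=m^2\tau^2-2\lambda d(1-d)\tau-(\varepsilon-d)\le0 .
\]
Since \(q(0)\le0\) and \(q\) is convex, this holds on \([0,\tau_+]\), where \(\tau_+\) is the larger root of \(q\). That root is exactly the first term in \eqref{eq:coupled-safeguard}. When \(m=0\), the condition reduces to \(\tau\le1/(2\lambda)\).

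\emph{Step 3 (uniform bounds along the algorithm).} Lemma~\ref{lem:matrix-sign-properties} gives \(\Omegabf_t\) skew with \(\|\Omegabf_t\|_{\mathrm{op}}\le1\). Lemma~\ref{lem:safe-region-bounds} gives \(\|\Xbf_t\|_{\mathrm{op}}\le\sqrt{1+\varepsilon}\). Hence
\[
m_t\le\|\Xbf_t\|_{\mathrm{op}}(1+\lambda d_t)\le M_\varepsilon .
\]
The root \(\tau_+\) is nonincreasing in \(m\), so it is enough to check \(q(\eta^\star_\varepsilon)\le0\) with \(m=M_\varepsilon\). I use two cases on \(d\).
\begin{itemize}
\item If \(d\le\varepsilon/2\), then \(\tau^2M_\varepsilon^2\le\varepsilon/2\le\varepsilon-d\) by the second term of \(\eta^\star_\varepsilon\).
\item If \(d\in(\varepsilon/2,\varepsilon]\), concavity of \(d(1-d)\) gives \(d(1-d)\ge\min\{\tfrac\varepsilon2(1-\tfrac\varepsilon2),\varepsilon(1-\varepsilon)\}\ge\tfrac\varepsilon2(1-\varepsilon)\). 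The third term of \(\eta^\star_\varepsilon\) then gives \(\tau^2M_\varepsilon^2\le\lambda\tau\varepsilon(1-\varepsilon)\le2\lambda\tau d(1-d)\).
\end{itemize}
Together with the cap \(1/(2\lambda)\), this yields \(\eta_t^{\mathrm{safe}}\ge\eta^\star_\varepsilon\). Invariance of \(\mathcal S_\varepsilon\) then follows by induction, since \(\tau_t\le\eta_t^{\mathrm{safe}}\) pathwise.

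The main obstacle is Step 2: it requires handling positive eigenvalues for which \(1-2\lambda\tau(1+c)\) changes sign. This is exactly where the cap \(1/(2\lambda)\) is needed.
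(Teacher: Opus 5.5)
Your proposal is correct and follows the paper's proof: the same right-Gram identity, the same scalar quadratic in \(\tau\) whose positive root (capped at \(1/(2\lambda)\)) is the safeguard, the same split \(d\le\varepsilon/2\) versus \(d\ge\varepsilon/2\) to obtain \(\eta_\varepsilon^\star\), and the same induction. The only difference is Step 2. The paper writes \(\Cbf-2\lambda\tau(\Cbf+\Cbf^2)=(1-q)\Cbf-q\Cbf^2\) with \(q=2\lambda\tau\in[0,1]\) and applies the triangle inequality, which gives \((1-q)d+qd^2\) in one line. Your three-case eigenvalue analysis reaches the same bound correctly but is not needed.
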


For the finite-time analysis, consider deterministic target step sizes satisfying
\begin{equation}
\label{eq:coupled-target-step-condition}
0<\eta_t^{\mathrm{tar}}\le\eta_\varepsilon^\star,\qquad t=0,\ldots,T-1.
\end{equation}
The safeguard is then inactive and \(\tau_t=\eta_t^{\mathrm{tar}}\) pathwise. Set \(a_\varepsilon:=2\lambda(1-\varepsilon)\), \(b_\varepsilon:=M_\varepsilon^2\), and \(\Gamma_\varepsilon:=b_\varepsilon/a_\varepsilon\).

\begin{lemma}[Pathwise orthogonality-residual recursion]
\label{lem:coupled-residual-recursion}
Suppose that \(\Xbf_0\in\mathcal S_\varepsilon\) and that the deterministic target step sizes satisfy \eqref{eq:coupled-target-step-condition}. Then, for \(t=0,\ldots,T-1\),
\begin{equation}
\label{eq:coupled-residual-recursion}
d_{t+1}\le(1-a_\varepsilon\tau_t)d_t+b_\varepsilon\tau_t^2.
\end{equation}
Moreover, \(1-a_\varepsilon\tau_t\ge\varepsilon>0\), and consequently
\begin{equation}
\label{eq:coupled-feasibility-telescoping}
a_\varepsilon\sum_{t=0}^{T-1}\tau_td_t\le d_0+b_\varepsilon\sum_{t=0}^{T-1}\tau_t^2.
\end{equation}
\end{lemma}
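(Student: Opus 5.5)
The plan is to take operator norms in the pathwise Gram identity \eqref{eq:coupled-gram-identity} and split it into a linear-in-\(\tau_t\) contraction part and a quadratic remainder. The first step is to record that, under \eqref{eq:coupled-target-step-condition}, Lemma~\ref{lem:coupled-safeguard} gives \(\Xbf_t\in\mathcal S_\varepsilon\) for all \(t\). Hence \(d_t\le\varepsilon\), \(m_t\le M_\varepsilon\), and \(\tau_t=\eta_t^{\mathrm{tar}}\le\eta_\varepsilon^\star\le 1/(2\lambda)\).

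Next I will rewrite \eqref{eq:coupled-gram-identity} as
\[
\Cbf_{t+1}=\Cbf_t\bigl(\Ibf_p-2\lambda\tau_t(\Ibf_p+\Cbf_t)\bigr)+\tau_t^2\Dbf_t^\top\Dbf_t .
\]
The first term is a polynomial in the symmetric matrix \(\Cbf_t\), so I diagonalize \(\Cbf_t\) with eigenvalues \(c_i\in[-d_t,d_t]\subseteq[-\varepsilon,\varepsilon]\). That term then has eigenvalues \(g(c_i)\), where \(g(c):=c\bigl(1-2\lambda\tau_t(1+c)\bigr)\). The factor \(1-2\lambda\tau_t(1+c)\) satisfies two bounds:
\begin{itemize}
\item it is at least \(1-2\lambda\tau_t(1+\varepsilon)\ge -\varepsilon\), using \(2\lambda\tau_t\le1\);
\item it is at most \(1-2\lambda\tau_t(1-\varepsilon)=1-a_\varepsilon\tau_t\).
\end{itemize}
Because \(2\lambda\tau_t\le 1\), the upper bound already gives \(1-a_\varepsilon\tau_t\ge 1-(1-\varepsilon)=\varepsilon>0\), which is the claimed positivity.

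The main obstacle is a clean bound on \(|g(c)|\). I need the factor's absolute value to be at most \(1-a_\varepsilon\tau_t\), which requires \(-(1-a_\varepsilon\tau_t)\le 1-2\lambda\tau_t(1+c)\). This reduces to \(2\lambda\tau_t(1+c)+2\lambda\tau_t(1-\varepsilon)\le 2\), i.e. \(2\lambda\tau_t(2+c-\varepsilon)\le2\). Since \(c\le\varepsilon\), this follows from \(\tau_t\le 1/(2\lambda)\). Hence \(|g(c_i)|\le(1-a_\varepsilon\tau_t)|c_i|\le(1-a_\varepsilon\tau_t)d_t\).

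For the quadratic term, \(\|\Dbf_t^\top\Dbf_t\|_{\mathrm{op}}=m_t^2\le M_\varepsilon^2=b_\varepsilon\). The triangle inequality then yields \eqref{eq:coupled-residual-recursion}.

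Finally, for \eqref{eq:coupled-feasibility-telescoping}, I rearrange the recursion as \(a_\varepsilon\tau_td_t\le d_t-d_{t+1}+b_\varepsilon\tau_t^2\) and sum over \(t=0,\dots,T-1\). Telescoping, and dropping \(-d_T\le0\), gives the bound. The positivity \(1-a_\varepsilon\tau_t\ge\varepsilon\) is not needed for the telescoping itself, but it confirms that the recursion is a genuine contraction.
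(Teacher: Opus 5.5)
Your proposal is correct and follows essentially the same route as the paper: take operator norms in the Gram identity \eqref{eq:coupled-gram-identity}, bound the linear part by $(1-a_\varepsilon\tau_t)d_t$ using $2\lambda\tau_t\le 1$ and $d_t\le\varepsilon$, bound the quadratic part by $M_\varepsilon^2\tau_t^2$, and telescope after dropping $-d_T$. The only cosmetic difference is in the linear part: you diagonalize $\Cbf_t$, whereas the paper applies the triangle inequality $\|(1-q)\Cbf_t-q\Cbf_t^2\|_{\mathrm{op}}\le(1-q)d_t+qd_t^2$ with $q=2\lambda\tau_t\in[0,1]$, and both yield the same estimate.
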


The step along the negative penalty gradient yields an \(O(\tau_td_t)\) contraction, whereas the quadratic discretization term is \(O(\tau_t^2)\), giving an \(O(\tau)\) residual-floor bound for a constant step size. The coupling of the contraction and forcing terms will motivate the extension in Section~\ref{sec:separate-steps-convergence}. The proofs of Lemmas~\ref{lem:coupled-safeguard} and~\ref{lem:coupled-residual-recursion} are given in Appendices~\ref{app:coupled-safeguard-proof} and~\ref{app:coupled-residual-proof}, respectively.

\subsection{Main Convergence Results}
\label{subsec:coupled-horizon-rates}
\label{subsec:coupled-finite-time}

Combining Lemmas~\ref{lem:coupled-safeguard} and~\ref{lem:coupled-residual-recursion} with the objective-descent and weighted momentum-tracking bounds in Appendices~\ref{app:coupled-objective-descent-proof}--\ref{app:coupled-tracking-accumulation} yields the following finite-time guarantee. Let \(f_\varepsilon^{\inf}:=\min_{\Xbf\in\mathcal S_\varepsilon}f(\Xbf)\) and \(\nu_{\varepsilon,B}:=\sigma\sqrt{n(1+\varepsilon)/B}\). Define
\(W_T:=\sum_{t=0}^{T-1}\tau_t\), \(Q_T:=\sum_{t=0}^{T-1}\tau_t^2\), \(\Delta_f:=f(\Xbf_0)-f_\varepsilon^{\inf}\), and
\begin{equation}
\label{eq:coupled-rate-quantities}
\begin{aligned}
\mathcal R_T^{\mathrm{feas}}
&:=
\frac{d_0+b_\varepsilon Q_T}{a_\varepsilon W_T},
\\
\mathcal R_T^{\mathrm{track}}
&:=
\nu_{\varepsilon,B}
\Bsbra{
\frac{\sum_{t=0}^{T-1}\tau_t\beta^t}{W_T}
+
\sqrt{\frac{1-\beta}{1+\beta}}
}
+
L_{\Psi,\varepsilon}M_\varepsilon
\frac{\beta}{1-\beta}
\frac{Q_T}{W_T},
\\
\mathcal R_T^{\mathrm{stat}}
&:=
\frac{\Delta_f}{W_T}
+
2\mathcal R_T^{\mathrm{track}}
+
\lambda G^{\mathrm{sym}}_\varepsilon\mathcal R_T^{\mathrm{feas}}
+
\frac{L_*M_\varepsilon^2}{2}
\frac{Q_T}{W_T}.
\end{aligned}
\end{equation}

\begin{theorem}[General finite-time guarantee for \algname{}]
\label{thm:coupled-finite-time}
Suppose that Assumptions~\ref{ass:operator-nuclear-smoothness}
and~\ref{ass:stochastic-oracle} hold.
Let \(\Xbf_0\in\mathcal S_\varepsilon\), not necessarily feasible, and
let the deterministic target step sizes satisfy
\eqref{eq:coupled-target-step-condition}.
Let \(R\) be independent of the algorithmic randomness, with \(\mathbb P(R=t)=\tau_t/W_T\) for \(t=0,\ldots,T-1\), and let \(\mathbb E_R\) denote expectation over \(R\) conditional on the iterates.
Then, for every \(T\ge1\),

\begin{equation}
\label{eq:coupled-finite-time-feasibility}
\begin{aligned}
\mathbb E_R[d_R]
&=\frac{\sum_{t=0}^{T-1}\tau_t d_t}{W_T}
\le\mathcal R_T^{\mathrm{feas}}\quad\text{pathwise},\\
\mathbb E\|\Psibf_R\|_*
&=\frac{\sum_{t=0}^{T-1}\tau_t\mathbb E\|\Psibf_t\|_*}{W_T}
\le\mathcal R_T^{\mathrm{stat}}.
\end{aligned}
\end{equation}

\end{theorem}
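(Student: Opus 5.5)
The proof splits into a feasibility part and a stationarity part; only the second needs new work. By Lemma~\ref{lem:coupled-safeguard} and \eqref{eq:coupled-target-step-condition}, every iterate stays in \(\mathcal S_\varepsilon\) and \(\tau_t=\eta_t^{\mathrm{tar}}\) is deterministic. The two equalities in \eqref{eq:coupled-finite-time-feasibility} then follow from the law of \(R\), since \(R\) is independent of the iterates. The feasibility bound is exactly \eqref{eq:coupled-feasibility-telescoping} of Lemma~\ref{lem:coupled-residual-recursion}, divided by \(a_\varepsilon W_T\).

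\textbf{Per-step descent.} Apply Proposition~\ref{prop:shared-basic-estimates}\ref{prop:shared-basic-estimates-descent} with \(\Ybf=\Xbf_{t+1}=\Xbf_t-\tau_t\Dbf_t\), using \(\|\Dbf_t\|_{\mathrm{op}}\le M_\varepsilon\):
\[
f(\Xbf_{t+1})\le f(\Xbf_t)-\tau_t\langle\nabla f(\Xbf_t),\Omegabf_t\Xbf_t\rangle-\lambda\tau_t\langle\nabla f(\Xbf_t),\Xbf_t\Cbf_t\rangle+\tfrac{L_*M_\varepsilon^2}{2}\tau_t^2 .
\]
Since \(\Omegabf_t\) is skew, \(\langle\nabla f(\Xbf_t),\Omegabf_t\Xbf_t\rangle=\langle\nabla f(\Xbf_t)\Xbf_t^\top,\Omegabf_t\rangle=\langle\Psibf_t,\Omegabf_t\rangle\). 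By \eqref{eq:relative-gradient-polar-alignment} this is at least \(\|\Psibf_t\|_*-2\|\Hbf_t-\Psibf_t\|_*\). The normal term is controlled by Proposition~\ref{prop:shared-basic-estimates}\ref{prop:shared-basic-estimates-coupling}, which bounds it by \(\lambda\tau_tG^{\mathrm{sym}}_\varepsilon d_t\).

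\textbf{Summing.} Sum over \(t\), use \(f(\Xbf_T)\ge f^{\inf}_\varepsilon\), take expectations, and divide by \(W_T\). The normal term becomes \(\lambda G^{\mathrm{sym}}_\varepsilon\mathcal R_T^{\mathrm{feas}}\) by the pathwise feasibility bound. This gives
\[
\frac{\sum_t\tau_t\mathbb E\|\Psibf_t\|_*}{W_T}\le\frac{\Delta_f}{W_T}+2\,\frac{\sum_t\tau_t\mathbb E\|\Hbf_t-\Psibf_t\|_*}{W_T}+\lambda G^{\mathrm{sym}}_\varepsilon\mathcal R^{\mathrm{feas}}_T+\frac{L_*M_\varepsilon^2}{2}\frac{Q_T}{W_T}.
\]
It remains to show that the weighted tracking error is at most \(\mathcal R_T^{\mathrm{track}}\).

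\textbf{Tracking error (main obstacle).} Write \(\Ebf_t:=\Hbf_t-\Psibf_t\) and \(\Zbf_t:=\Psibf_t^{(B)}-\Psibf_t\). Unrolling the EMA gives
\[
\Ebf_t=\beta^t\Zbf_0+(1-\beta)\sum_{s=1}^t\beta^{t-s}\Zbf_s+\sum_{s=1}^{t}\beta^{t-s+1}(\Psibf_{s-1}-\Psibf_s).
\]
\emph{Drift term.} By Proposition~\ref{prop:shared-basic-estimates}\ref{prop:shared-basic-estimates-psi-lipschitz}, \(\|\Psibf_{s-1}-\Psibf_s\|_*\le L_{\Psi,\varepsilon}M_\varepsilon\tau_{s-1}\). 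Weighting by \(\tau_t\) and summing, I would swap the order of summation and bound \(\tau_t\tau_{s-1}\le\tfrac12(\tau_t^2+\tau_{s-1}^2)\). Each index then carries at most the geometric factor \(\sum_{k\ge1}\beta^k=\beta/(1-\beta)\), giving \(L_{\Psi,\varepsilon}M_\varepsilon\frac{\beta}{1-\beta}Q_T\).
\emph{Noise terms.} The \(\Zbf_s\) are conditionally mean zero given the past and are martingale differences, so the cross terms vanish. Working in Frobenius norm, Proposition~\ref{prop:shared-basic-estimates}\ref{prop:shared-basic-estimates-sampling} gives
\[
\mathbb E\Big\|(1-\beta)\sum_{s\ge1}\beta^{t-s}\Zbf_s\Big\|_F^2\le(1-\beta)^2\frac{1}{1-\beta^2}(1+\varepsilon)\frac{\sigma^2}{B}.
\]
Converting with \(\|\cdot\|_*\le\sqrt n\|\cdot\|_F\) and Jensen yields \(\nu_{\varepsilon,B}\sqrt{(1-\beta)/(1+\beta)}\). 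The initial term contributes \(\beta^t\nu_{\varepsilon,B}\), which after weighting gives \(\nu_{\varepsilon,B}\sum_t\tau_t\beta^t/W_T\).

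The delicate points are all in this last step. The \(\tau_t\)-weighted double sum must produce exactly \(Q_T\) with the factor \(\beta/(1-\beta)\), and I would check this by handling the two halves of the AM–GM split separately rather than assuming monotone step sizes. The martingale orthogonality also needs care, since \(\Xbf_s\) depends on earlier samples. It holds because \(\Zbf_s\) is computed from fresh samples at the \(\mathcal F_{s-1}\)-measurable point \(\Xbf_s\).
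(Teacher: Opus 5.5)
Your proposal is correct and follows essentially the same route as the paper. Both arguments use the same ingredients: the smoothness descent step with the polar alignment bound and the symmetric-coupling estimate; summation combined with the pathwise feasibility telescoping; and the tracking bound, which splits \(\Hbf_t-\Psibf_t\) into a martingale noise part and a drift part controlled by the \(\Psi\)-Lipschitz estimate and the bound \(\sum_{0\le j<t<T}\beta^{t-j}\tau_t\tau_j\le\frac{\beta}{1-\beta}Q_T\) obtained from \(2\tau_t\tau_j\le\tau_t^2+\tau_j^2\). The only difference is that your filtration index is shifted by one relative to the paper's \(\mathcal F_t\), which is immaterial.
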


The proof is given in Appendix~\ref{app:coupled-finite-time-proof}. Choosing a horizon-dependent constant step within each run yields the following rates.

\begin{corollary}[Stochastic-momentum rate]
\label{cor:coupled-stochastic-momentum}
Suppose Assumptions~\ref{ass:operator-nuclear-smoothness}
and~\ref{ass:stochastic-oracle} hold, the mini-batch size \(B\) is fixed independently of \(T\), and
$\Xbf_0\in\operatorname{St}(n,p)$. Choose
\(\eta_t^{\mathrm{tar}}\equiv\tau_T:=\eta_0T^{-3/4}\) and \(\beta=\beta_T:=1-\beta_0T^{-1/2}\),
where $\eta_0>0$ and $\beta_0\in(0,1]$. For all sufficiently large $T$,
$\tau_t=\tau_T$ throughout the run and
\begin{equation}
\frac1T\sum_{t=0}^{T-1}\mathbb E\|\Psibf_t\|_*
=\mathcal O(T^{-1/4}),
\qquad
\frac1T\sum_{t=0}^{T-1}d_t=\mathcal O(T^{-3/4})
\quad\text{pathwise}.
\label{eq:coupled-stochastic-rates}
\end{equation}
\end{corollary}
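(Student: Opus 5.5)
The plan is to specialize Theorem~\ref{thm:coupled-finite-time} to the constant step \(\tau_t\equiv\tau_T=\eta_0T^{-3/4}\) and momentum \(\beta_T=1-\beta_0T^{-1/2}\), and then read off the order of each term in \eqref{eq:coupled-rate-quantities}.

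\textbf{Step 1 (the safeguard is inactive).} Because \(\eta_\varepsilon^\star\) in \eqref{eq:coupled-uniform-safe-threshold} does not depend on \(T\), we have \(\tau_T\le\eta_\varepsilon^\star\) once \(T\ge(\eta_0/\eta_\varepsilon^\star)^{4/3}\). Then \eqref{eq:coupled-target-step-condition} holds, and Lemma~\ref{lem:coupled-safeguard} gives \(\tau_t=\tau_T\) pathwise. With constant steps the weights \(\tau_t/W_T\) equal \(1/T\), so the left-hand sides of \eqref{eq:coupled-finite-time-feasibility} are exactly the uniform averages in \eqref{eq:coupled-stochastic-rates}. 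We also have \(W_T=\eta_0T^{1/4}\), \(Q_T=\eta_0^2T^{-1/2}\), and \(Q_T/W_T=\tau_T=\eta_0T^{-3/4}\).

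\textbf{Step 2 (feasibility).} Since \(\Xbf_0\in\operatorname{St}(n,p)\), we have \(d_0=0\). Hence
\[
\mathcal R_T^{\mathrm{feas}}=\frac{b_\varepsilon Q_T}{a_\varepsilon W_T}=\Gamma_\varepsilon\eta_0T^{-3/4},
\]
which is the pathwise residual claim. Feasible initialization is essential here: a nonzero \(d_0\) would contribute \(d_0/(a_\varepsilon W_T)=\mathcal O(T^{-1/4})\) and spoil the \(T^{-3/4}\) rate.

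\textbf{Step 3 (stationarity).} We bound each term of \(\mathcal R_T^{\mathrm{stat}}\) in turn.
\begin{enumerate}[label=\textnormal{(\alph*)},leftmargin=*]
\item The objective term is \(\Delta_f/W_T=\Delta_f\eta_0^{-1}T^{-1/4}\).
\item The feasibility term is \(\lambda G^{\mathrm{sym}}_\varepsilon\mathcal R_T^{\mathrm{feas}}=\mathcal O(T^{-3/4})\).
\item The smoothness term is \(\tfrac{L_*M_\varepsilon^2}{2}\tau_T=\mathcal O(T^{-3/4})\).
\end{enumerate}
For the tracking term \(\mathcal R_T^{\mathrm{track}}\) there are three pieces.
\begin{enumerate}[label=\textnormal{(\alph*)},leftmargin=*,start=4]
\item The initialization-bias piece is bounded by
\[
\frac{\sum_t\tau_T\beta^t}{W_T}\le\frac{1}{T(1-\beta_T)}=\beta_0^{-1}T^{-1/2}.
\]
\item The noise piece satisfies \(\sqrt{(1-\beta_T)/(1+\beta_T)}\le\sqrt{1-\beta_T}=\beta_0^{1/2}T^{-1/4}\). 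Since \(B\) is fixed, \(\nu_{\varepsilon,B}\) is a constant, so this piece is \(\mathcal O(T^{-1/4})\).
\item The drift piece is bounded using \(\beta_T<1\):
\[
L_{\Psi,\varepsilon}M_\varepsilon\frac{\beta_T}{1-\beta_T}\tau_T\le L_{\Psi,\varepsilon}M_\varepsilon\,\beta_0^{-1}T^{1/2}\,\eta_0T^{-3/4}=\mathcal O(T^{-1/4}).
\]
\end{enumerate}
Summing all pieces gives \(\mathcal O(T^{-1/4})\). The stated \(\beta_0\in(0,1]\) ensures \(\beta_T\in[0,1)\) for all \(T\ge1\), so Theorem~\ref{thm:coupled-finite-time} applies.

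\textbf{Main obstacle.} The only delicate point is balancing the momentum parameters. The variance piece \(\sqrt{1-\beta}\) must be weighed against the drift piece \(\tau/(1-\beta)\) and the descent term \(1/(T\tau)\). The choices \(\tau\sim T^{-3/4}\) and \(1-\beta\sim T^{-1/2}\) make all three equal to \(T^{-1/4}\). One should also check that the hidden constants depend only on \(\varepsilon,\lambda,L_*,\sigma,n,B,\eta_0,\beta_0,\Delta_f\) and not on \(T\). This holds because the constants of Theorem~\ref{thm:coupled-finite-time} are \(T\)-independent once \(\beta\) enters only through the displayed expressions.
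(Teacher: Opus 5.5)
Your proposal is correct and follows essentially the same route as the paper's proof. You use the \(T\)-independent threshold \(\eta_\varepsilon^\star\) to make the safeguard inactive, so that \(W_T=T\tau_T\) and \(Q_T=T\tau_T^2\); you use \(d_0=0\) to get \(\mathcal R_T^{\mathrm{feas}}=\Gamma_\varepsilon\tau_T\); and you substitute \(\tau_T,\beta_T\) into \(\mathcal R_T^{\mathrm{stat}}\) with \(\sum_t\beta_T^t\le(1-\beta_T)^{-1}\), obtaining term-by-term orders (\(T^{-1/4}\) for the objective, noise, and drift pieces, \(T^{-1/2}\) for the initialization bias, \(T^{-3/4}\) for the rest) that match the paper's explicit bound.
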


The stochastic rate is proved in Appendix~\ref{app:coupled-stochastic-rate-proof}.

\begin{corollary}[Deterministic-momentum rate]
\label{cor:coupled-deterministic-no-momentum}
Suppose Assumption~\ref{ass:operator-nuclear-smoothness} holds and exact gradients are used. Let \(\Xbf_0\in\mathcal S_\varepsilon\), fix \(\beta\in[0,1)\) independently of \(T\), and choose \(\eta_t^{\mathrm{tar}}\equiv\tau_T:=\eta_0T^{-1/2}\), where \(\eta_0>0\) is fixed. For all sufficiently large \(T\), \(\tau_t=\tau_T\) throughout the run, and
\begin{equation}
\label{eq:coupled-deterministic-stationarity-explicit}
\begin{aligned}
\frac1T\sum_{t=0}^{T-1}\|\Psibf_t\|_*
\le{}&
\frac{\Delta_f+\lambda G_\varepsilon^{\mathrm{sym}}d_0/a_\varepsilon}
{\eta_0\sqrt T} +\frac{\eta_0}{\sqrt T}
\left(
2L_{\Psi,\varepsilon}M_\varepsilon\frac{\beta}{1-\beta}
+\lambda G_\varepsilon^{\mathrm{sym}}\Gamma_\varepsilon
+\frac{L_*M_\varepsilon^2}{2}
\right).
\end{aligned}
\end{equation}
and
\begin{equation}
\label{eq:coupled-deterministic-feasibility-explicit}
\frac1T\sum_{t=0}^{T-1}d_t
\le
\frac1{\sqrt T}
\left(\frac{d_0}{a_\varepsilon\eta_0}+\Gamma_\varepsilon\eta_0\right).
\end{equation}
Thus,
\begin{equation}
\label{eq:coupled-deterministic-rates}
\frac1T\sum_{t=0}^{T-1}\|\Psibf_t\|_*=\mathcal O(T^{-1/2}),\qquad
\frac1T\sum_{t=0}^{T-1}d_t=\mathcal O(T^{-1/2}).
\end{equation}
In particular, setting \(\beta=0\) recovers the deterministic no-momentum case.
\end{corollary}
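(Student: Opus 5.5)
The plan is to specialize Theorem~\ref{thm:coupled-finite-time} to exact gradients (\(\sigma=0\), so \(\nu_{\varepsilon,B}=0\)) and a constant step \(\tau_t\equiv\tau_T=\eta_0T^{-1/2}\).

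\textbf{Step 1: the safeguard is inactive.} Since \(\eta_\varepsilon^\star>0\) depends only on \(\varepsilon\) and \(\lambda\), for all \(T\ge(\eta_0/\eta_\varepsilon^\star)^2\) we have \(\tau_T\le\eta_\varepsilon^\star\). Then \eqref{eq:coupled-target-step-condition} holds, and Lemma~\ref{lem:coupled-safeguard} gives \(\tau_t=\tau_T\) and \(\Xbf_t\in\mathcal S_\varepsilon\) throughout the run. With constant steps the sampling weights are uniform, so \(W_T=T\tau_T=\eta_0\sqrt T\) and \(Q_T/W_T=\tau_T=\eta_0/\sqrt T\).

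\textbf{Step 2: feasibility bound.} Lemma~\ref{lem:coupled-residual-recursion} gives
\[
\frac1T\sum_t d_t\le\mathcal R_T^{\mathrm{feas}}=\frac{d_0}{a_\varepsilon\eta_0\sqrt T}+\Gamma_\varepsilon\frac{\eta_0}{\sqrt T},
\]
which is exactly \eqref{eq:coupled-deterministic-feasibility-explicit}.

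\textbf{Step 3: stationarity bound.} Substituting into \(\mathcal R_T^{\mathrm{stat}}\):
\begin{itemize}
\item \(\Delta_f/W_T\) gives \(\Delta_f/(\eta_0\sqrt T)\).
\item The feasibility term \(\lambda G^{\mathrm{sym}}_\varepsilon\mathcal R_T^{\mathrm{feas}}\) gives the \(\lambda G^{\mathrm{sym}}_\varepsilon d_0/(a_\varepsilon\eta_0\sqrt T)\) contribution and the \(\lambda G^{\mathrm{sym}}_\varepsilon\Gamma_\varepsilon\eta_0/\sqrt T\) contribution.
\item The curvature term gives \(\tfrac{L_*M_\varepsilon^2}{2}\eta_0/\sqrt T\).
\item The tracking term \(2\mathcal R_T^{\mathrm{track}}\) reduces, with \(\nu=0\), to \(2L_{\Psi,\varepsilon}M_\varepsilon\frac{\beta}{1-\beta}\eta_0/\sqrt T\).
\end{itemize}
Collecting these terms yields \eqref{eq:coupled-deterministic-stationarity-explicit}. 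Since the expectation is trivial in the deterministic case, \(\mathbb E\|\Psibf_R\|_*\) is just the uniform average. The \(\mathcal O(T^{-1/2})\) rates \eqref{eq:coupled-deterministic-rates} follow because \(\beta\) and \(\eta_0\) are fixed independently of \(T\). For \(\beta=0\), the tracking term vanishes and \(\Hbf_t=\Psibf_t\).

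\textbf{Main obstacle.} The one point to check is that Theorem~\ref{thm:coupled-finite-time} applies when \(\sigma=0\) and exact gradients are used, since it was stated under Assumption~\ref{ass:stochastic-oracle}. Exact gradients satisfy that assumption with \(\sigma=0\), taking a degenerate oracle \(\nabla f(\Xbf;\xi)=\nabla f(\Xbf)\), so \(\nu_{\varepsilon,B}=0\) regardless of \(B\). The bias term \(\sum_t\tau_t\beta^t/W_T\) also carries the factor \(\nu_{\varepsilon,B}\), so it drops as well. If the tracking bound in the appendix had an additional \(\sigma\)-independent initialization term, I would instead use \(\Hbf_0=\Psibf_0\) exactly, which makes the initial tracking error zero.
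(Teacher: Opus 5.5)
Your proposal is correct. It uses the same ingredients as the paper, with identical constants: safeguard inactivity once $T\ge(\eta_0/\eta_\varepsilon^\star)^2$, the telescoped residual recursion of Lemma~\ref{lem:coupled-residual-recursion}, the summed descent inequality \eqref{eq:coupled-objective-descent}, and the momentum-lag bound. The only difference is packaging: instead of invoking Theorem~\ref{thm:coupled-finite-time} with a degenerate $\sigma=0$ oracle, the paper re-derives the deterministic case directly from $\Hbf_0=\Psibf_0$ and the per-iterate bound $\|\Hbf_t-\Psibf_t\|_*\le L_{\Psi,\varepsilon}M_\varepsilon\frac{\beta}{1-\beta}\tau_T$, which is exactly the fallback you describe in your last paragraph.
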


The proof is given in Appendix~\ref{app:coupled-deterministic-rate-proof}. Fixed momentum changes the stationarity constant through the tracking error, while the residual bound is unchanged.

\section{\algplus{}: Two-Timescale Step Sizes}
\label{sec:separate-steps-convergence}

To improve feasibility control, we introduce \algplus{}, which retains a fixed normal step while allowing the tangential step to decrease. The analysis uses the assumptions and notation of Section~\ref{sec:preliminaries}.

\paragraph{The \algplus{} update.}

\algplus{}, summarized in Algorithm~\ref{alg:landom-plus}, coincides with Algorithm~\ref{alg:landom} except for the step: it replaces the coupled direction \(\Dbf_t\) and step \(\tau_t\) by a separate tangential step \(\gamma_t\) and a fixed normal step \(\rho\):
\begin{equation}
\Xbf_{t+1}=\Xbf_t-\gamma_t\Omegabf_t\Xbf_t-\rho \Xbf_t\Cbf_t.
\label{eq:separate-steps-update}
\end{equation}

Choose $0<\rho<\bar\rho_\varepsilon$ independently of $T$, with the
admissible range given in \eqref{eq:separate-steps-rho-admissible}, and set
\begin{equation}
\gamma_t=\min\big\{\gamma_t^{\mathrm{tar}},
\gamma_{\varepsilon,\rho}^{\mathrm{safe}}(d_t)\big\},
\label{eq:separate-steps-safeguarded-step}
\end{equation}
where $\gamma_t^{\mathrm{tar}}>0$ is prescribed and the safeguard is
defined in \eqref{eq:separate-steps-safe-map}.

\begin{algorithm}[!hbp]

\caption{\algplus{} with two-timescale step sizes}
\label{alg:landom-plus}
\begin{algorithmic}[1]
\Require $\Xbf_0\in\mathcal S_\varepsilon$, horizon $T$, batch size $B$,
momentum $\beta\in[0,1)$, fixed normal step
$0<\rho<\bar\rho_\varepsilon$, and target step sizes
$\{\gamma_t^{\mathrm{tar}}\}$
\For{$t=0,\ldots,T-1$}
    \State Compute $\Omegabf_t$, $\Cbf_t$, and $d_t$ as in lines~2--\ref{line:coupled-residual} of Algorithm~\ref{alg:landom}
    \State $\gamma_t\gets\min\{\gamma_t^{\mathrm{tar}},
    \gamma_{\varepsilon,\rho}^{\mathrm{safe}}(d_t)\}$
    \State $\Xbf_{t+1}\gets \Xbf_t-\gamma_t\Omegabf_t\Xbf_t-\rho \Xbf_t\Cbf_t$
\EndFor
\State \Return $\Xbf_T$
\end{algorithmic}
\end{algorithm}

\paragraph{Orthogonality-residual dynamics.}

For the update \eqref{eq:separate-steps-update}, define
\begin{equation}
\Ebf_t
:=
\gamma_t\Omegabf_t\Xbf_t+\rho \Xbf_t\Cbf_t,
\qquad
\Xbf_{t+1}=\Xbf_t-\Ebf_t.
\label{eq:separate-steps-displacement}
\end{equation}
Since $\Omegabf_t\in\operatorname{Skew}(n)$, the tangential first-order term cancels,
giving the pathwise identity
\begin{equation}
\Cbf_{t+1}
=
\Cbf_t-2\rho(\Cbf_t+\Cbf_t^2)+\Ebf_t^\top \Ebf_t.
\label{eq:separate-steps-gram-identity}
\end{equation}
In this identity, the coefficient of the first-order normal term depends on \(\rho\), while the tangential step enters only through the quadratic term \(\Ebf_t^\top \Ebf_t\). The following subsection bounds this quadratic term to establish residual contraction.

\subsection{Safeguard and Uniform Residual Contraction}
\label{subsec:separate-steps-feasibility}

Set \(\kappa_\varepsilon:=1+\varepsilon\) and \(\Delta_{\varepsilon,\rho}:=2(1-\varepsilon)-\rho\varepsilon(1+\varepsilon)\), and assume
\begin{equation}
\label{eq:separate-steps-rho-admissible}
0<\rho<\bar\rho_\varepsilon:=\min\left\{\frac12,\frac{2(1-\varepsilon)}{\varepsilon(1+\varepsilon)}\right\}.
\end{equation}
Then \(2\rho<1\) and \(\Delta_{\varepsilon,\rho}>0\). For \(d\in[0,\varepsilon]\), define
\begin{equation}
\label{eq:separate-steps-safe-map}
\gamma_{\varepsilon,\rho}^{\mathrm{safe}}(d):=
\left[\sqrt{\frac{\varepsilon-d+2\rho d(1-d)}{1+d}}-\rho d\right]_+.
\end{equation}
Define the step-size thresholds \(\gamma_\varepsilon^{\mathrm{unif}}(\rho):=\sqrt{2\rho\varepsilon(1-\varepsilon)/(1+\varepsilon)}-\rho\varepsilon\), \(\gamma_{\varepsilon,\rho}^{\mathrm{ctr}}:=\Delta_{\varepsilon,\rho}/(4\kappa_\varepsilon)\), and \(\gamma_{\varepsilon,\rho}^{\mathrm{adm}}:=\min\{\gamma_\varepsilon^{\mathrm{unif}}(\rho),\gamma_{\varepsilon,\rho}^{\mathrm{ctr}}\}\). The admissible cap combines the uniform safeguard threshold with the contraction requirement.

\begin{lemma}[Safeguard and safe-region invariance]
\label{lem:separate-steps-safeguard}
Let \(0<\rho<\bar\rho_\varepsilon\), \(\Xbf\in\mathcal S_\varepsilon\), and \(\Omegabf\in\operatorname{Skew}(n)\) with \(\|\Omegabf\|_{\mathrm{op}}\le1\). Set \(\Cbf:=\Xbf^\top\Xbf-\Ibf_p\) and \(d:=\|\Cbf\|_{\mathrm{op}}\). Then, for every \(0\le\gamma\le\gamma_{\varepsilon,\rho}^{\mathrm{safe}}(d)\),
\[
\Xbf-\gamma\Omegabf\Xbf-\rho\Xbf\Cbf\in\mathcal S_\varepsilon.
\]
Moreover, \(\gamma_{\varepsilon,\rho}^{\mathrm{safe}}(d)\ge\gamma_\varepsilon^{\mathrm{unif}}(\rho)>0\) for all \(d\in[0,\varepsilon]\). Since the compact polar direction satisfies the stated conditions, Algorithm~\ref{alg:landom-plus} initialized at \(\Xbf_0\in\mathcal S_\varepsilon\) remains in \(\mathcal S_\varepsilon\) pathwise.
\end{lemma}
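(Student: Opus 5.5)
The plan is to work directly from the pathwise Gram identity \eqref{eq:separate-steps-gram-identity}, written for a single step. Let \(\Xbf^+:=\Xbf-\Ebf\) with \(\Ebf:=\gamma\Omegabf\Xbf+\rho\Xbf\Cbf\). Expanding the product and using \(\Omegabf^\top=-\Omegabf\) gives
\[
\Cbf^+:=(\Xbf^+)^\top\Xbf^+-\Ibf_p=g(\Cbf)+\Ebf^\top\Ebf,\qquad g(c):=(1-2\rho)c-2\rho c^2 .
\]
The first-order tangential terms cancel here because \(\Xbf^\top\Omegabf\Xbf\) is skew-symmetric. Since \(\Cbf\) is symmetric with spectrum in \([-d,d]\), the matrix \(g(\Cbf)\) has eigenvalues \(g(c_i)\) with \(c_i\in[-d,d]\). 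Also \(\Ebf^\top\Ebf\succeq 0\).

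To control \(\Ebf\), I will use \(\|\Xbf\|_{\mathrm{op}}\le\sqrt{1+d}\) from Lemma~\ref{lem:safe-region-bounds}, applied with \(d\) in place of \(\varepsilon\). Together with \(\|\Omegabf\|_{\mathrm{op}}\le1\) and \(\|\Xbf\Cbf\|_{\mathrm{op}}\le\|\Xbf\|_{\mathrm{op}}d\), this gives
\[
0\preceq\Ebf^\top\Ebf\preceq (1+d)(\gamma+\rho d)^2\,\Ibf_p .
\]
Because \(\Cbf^+\) is symmetric, it then suffices to prove \(\lambda_{\max}(\Cbf^+)\le\varepsilon\) and \(\lambda_{\min}(\Cbf^+)\ge-\varepsilon\).

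\emph{Lower bound.} Here \(\lambda_{\min}(\Cbf^+)\ge\min_{|c|\le d}g(c)\), since \(\Ebf^\top\Ebf\succeq0\). The function \(g\) is concave, so its minimum over \([-d,d]\) is attained at an endpoint.
\begin{itemize}
\item At the left endpoint, \(g(-d)=-d+2\rho d(1-d)\ge-d\ge-\varepsilon\).
\item At the right endpoint, \(g(d)=d(1-2\rho-2\rho d)\ge-d\), because \(2-2\rho-2\rho d\ge0\) when \(\rho<1/2\) and \(d<1\).
\end{itemize}
Hence \(\lambda_{\min}(\Cbf^+)\ge-\varepsilon\).

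\emph{Upper bound.} I will show that \(\max_{|c|\le d}g(c)\le d-2\rho d(1-d)\) by splitting on the sign of \(c\).
\begin{itemize}
\item For \(c\le0\), we have \(g(c)\le0\), and the target \(d-2\rho d(1-d)=d(1-2\rho+2\rho d)\) is nonnegative since \(\rho<1/2\).
\item For \(c\in[0,d]\), we have \(g(c)\le(1-2\rho)c\le d-2\rho d+2\rho d^2\).
\end{itemize}
It follows that
\[
\lambda_{\max}(\Cbf^+)\le d-2\rho d(1-d)+(1+d)(\gamma+\rho d)^2 .
\]
This is at most \(\varepsilon\) exactly when \(\gamma+\rho d\le\sqrt{(\varepsilon-d+2\rho d(1-d))/(1+d)}\), which is the condition \(\gamma\le\gamma^{\mathrm{safe}}_{\varepsilon,\rho}(d)\). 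The radicand is nonnegative by the same sign analysis. This proves the first claim.

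\emph{Uniform lower bound on the safeguard.} Let \(\psi(d):=\varepsilon-d+2\rho d(1-d)\), which is concave, and let \(\varphi(d):=\psi(d)/(1+d)\). The endpoint values are \(\psi(0)=\varepsilon\) and \(\psi(\varepsilon)=2\rho\varepsilon(1-\varepsilon)\). Since \(2\rho(1-\varepsilon)<1\), we have \(\psi(0)\ge\psi(\varepsilon)\). By concavity, \(\psi\) lies above the chord on \([0,\varepsilon]\), so \(\psi(d)\ge\psi(\varepsilon)\) there. Combined with \(1/(1+d)\ge1/(1+\varepsilon)\), this gives \(\varphi(d)\ge\varphi(\varepsilon)\). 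Moreover \(\rho d\le\rho\varepsilon\), so
\[
\sqrt{\varphi(d)}-\rho d\ge\gamma^{\mathrm{unif}}_\varepsilon(\rho).
\]
It remains to check that \(\gamma^{\mathrm{unif}}_\varepsilon(\rho)>0\). Squaring, this is \(2\rho\varepsilon(1-\varepsilon)/(1+\varepsilon)>\rho^2\varepsilon^2\), which is equivalent to \(\Delta_{\varepsilon,\rho}>0\) and is guaranteed by \eqref{eq:separate-steps-rho-admissible}. In particular the positive part in \eqref{eq:separate-steps-safe-map} is inactive.

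\emph{Invariance along the algorithm.} By Lemma~\ref{lem:matrix-sign-properties}, \(\Omegabf_t=\cpolar(\Hbf_t)\) is skew-symmetric with \(\|\Omegabf_t\|_{\mathrm{op}}\le1\). The rule \eqref{eq:separate-steps-safeguarded-step} gives \(\gamma_t\le\gamma^{\mathrm{safe}}_{\varepsilon,\rho}(d_t)\). Induction on \(t\) then shows \(\Xbf_t\in\mathcal S_\varepsilon\) for all \(t\), pathwise.

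The main obstacle I expect is the upper eigenvalue bound. One has to see that the non-monotone concave map \(g\) is dominated by \(d-2\rho d(1-d)\), even though \(g'\) may turn negative on \([0,d]\), so that the bound matches the safeguard formula exactly. Routing the positive range through the linear bound \((1-2\rho)c\) is the step that avoids a case analysis on the location of the vertex of \(g\).
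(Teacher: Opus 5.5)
Your proof is correct, but it reaches both the one-step inequality and the uniform bound by different routes than the paper.

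\textbf{One-step estimate.} The paper does not split the spectrum. It applies the triangle inequality directly to the Gram identity, $\|\Cbf^+\|_{\mathrm{op}}\le(1-2\rho)\|\Cbf\|_{\mathrm{op}}+2\rho\|\Cbf^2\|_{\mathrm{op}}+\|\Ebf\|_{\mathrm{op}}^2$ (using $2\rho<1$), and obtains $d^+\le d-2\rho d(1-d)+(1+d)(\gamma+\rho d)^2$ in one line. So the obstacle you anticipated, the non-monotonicity of $g$, never arises there: the bound $|g(c)|\le(1-2\rho)|c|+2\rho c^2$ already suffices. Your one-sided argument, via Weyl's inequality and $\Ebf^\top\Ebf\succeq0$, is longer but tells you more. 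It shows that the lower end of the spectrum stays above $-\varepsilon$ for every $\gamma$. It also gives $\lambda_{\max}(g(\Cbf))\le(1-2\rho)d$, so the slightly larger safeguard $\sqrt{(\varepsilon-(1-2\rho)d)/(1+d)}-\rho d$ would also be valid. Relaxing to $d-2\rho d(1-d)$ only serves to match the stated formula.

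\textbf{Uniform lower bound.} The paper differentiates your $\varphi$ and finds $\varphi'(d)=-(2\rho d^2+4\rho d+1+\varepsilon-2\rho)/(1+d)^2<0$. It concludes that $\sqrt{\varphi(d)}-\rho d$ is decreasing on $[0,\varepsilon]$. Your argument (concavity of $\psi$, the endpoint comparison $\psi(0)\ge\psi(\varepsilon)$, and monotonicity of $1/(1+d)$) avoids calculus. It yields only $\varphi(d)\ge\varphi(\varepsilon)$ rather than monotonicity, but that is all the lemma needs.

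\textbf{Ordering.} Your step ``$\lambda_{\max}(\Cbf^+)\le\varepsilon$ exactly when $\gamma\le\gamma^{\mathrm{safe}}_{\varepsilon,\rho}(d)$'' tacitly assumes that the positive part in the safeguard is inactive. Otherwise $\gamma^{\mathrm{safe}}_{\varepsilon,\rho}(d)=0$ would not certify $\gamma=0$. This is harmless, because you later prove $\sqrt{\varphi(d)}-\rho d\ge\gamma^{\mathrm{unif}}_\varepsilon(\rho)>0$. However, that uniform bound should logically come first, or be cited at that step.
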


If \(0<\gamma_t^{\mathrm{tar}}\le\gamma_{\varepsilon,\rho}^{\mathrm{adm}}\), then \(\gamma_t=\gamma_t^{\mathrm{tar}}\) pathwise. Define \(a_{\varepsilon,\rho}:=\rho\Delta_{\varepsilon,\rho}/2\) and \(q_{\varepsilon,\rho}:=1-a_{\varepsilon,\rho}\in(0,1)\).

\begin{lemma}[Pathwise residual contraction]
\label{lem:separate-steps-residual-recursion}
Suppose that \(\Xbf_0\in\mathcal S_\varepsilon\), \(0<\rho<\bar\rho_\varepsilon\), and the deterministic target step sizes satisfy \(0<\gamma_t^{\mathrm{tar}}\le\gamma_{\varepsilon,\rho}^{\mathrm{adm}}\). Then
\begin{equation}
\label{eq:separate-steps-residual-recursion}
d_{t+1}\le q_{\varepsilon,\rho}d_t+\kappa_\varepsilon\gamma_t^2\qquad\text{pathwise}.
\end{equation}

If \(\gamma_j\le\bar\gamma\) for all \(j<t\), then

\begin{equation}
\label{eq:separate-steps-residual-floor}
d_t\le q_{\varepsilon,\rho}^{\,t}d_0+c_{\varepsilon,\rho}^{\mathrm{res}}(1-q_{\varepsilon,\rho}^{\,t})\bar\gamma^2,\qquad
c_{\varepsilon,\rho}^{\mathrm{res}}:=\frac{\kappa_\varepsilon}{a_{\varepsilon,\rho}}=\frac{2(1+\varepsilon)}{\rho[2(1-\varepsilon)-\rho\varepsilon(1+\varepsilon)]}.
\end{equation}

\end{lemma}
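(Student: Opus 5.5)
The argument starts from the pathwise Gram identity \eqref{eq:separate-steps-gram-identity}, \(\Cbf_{t+1}=\Cbf_t-2\rho(\Cbf_t+\Cbf_t^2)+\Ebf_t^\top\Ebf_t\). By Lemma~\ref{lem:separate-steps-safeguard}, every iterate stays in \(\mathcal S_\varepsilon\), and the target cap gives \(\gamma_t=\gamma_t^{\mathrm{tar}}\). We then bound the two pieces separately in operator norm.

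\textbf{Linear part.} The matrix \((1-2\rho)\Cbf_t-2\rho\Cbf_t^2\) is a spectral function of \(\Cbf_t\) with eigenvalue map \(g(c)=(1-2\rho)c-2\rho c^2\) on \(c\in[-d_t,d_t]\). Since \(2\rho<1\), we have \(|g(c)|\le(1-2\rho)|c|+2\rho c^2\), which gives the crude bound \(\|(1-2\rho)\Cbf_t-2\rho\Cbf_t^2\|_{\mathrm{op}}\le(1-2\rho+2\rho\varepsilon)d_t\). This does not match the stated constant \(a_{\varepsilon,\rho}=\rho\Delta_{\varepsilon,\rho}/2\), so the quadratic term must supply a further correction.

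\textbf{Quadratic term.} Expand
\[
\Ebf_t^\top\Ebf_t=\gamma_t^2\Xbf_t^\top\Omegabf_t^\top\Omegabf_t\Xbf_t+\rho^2\Cbf_t\Xbf_t^\top\Xbf_t\Cbf_t+\gamma_t\rho\,(\text{cross terms}).
\]
The first term has operator norm at most \(\gamma_t^2\kappa_\varepsilon\). The second equals \(\rho^2\Cbf_t^2(\Ibf+\Cbf_t)\), since \(\Cbf_t\) commutes with \(\Xbf_t^\top\Xbf_t=\Ibf+\Cbf_t\), so its norm is at most \(\rho^2\varepsilon(1+\varepsilon)d_t\). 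The cross terms \(\gamma_t\rho(\Xbf_t^\top\Omegabf_t^\top\Xbf_t\Cbf_t+\Cbf_t\Xbf_t^\top\Omegabf_t\Xbf_t)\) are bounded by \(2\gamma_t\rho\kappa_\varepsilon d_t\). Because \(\Ebf_t^\top\Ebf_t\succeq0\), a sharper route is available: one-sided bounds on eigenvalues, \(\lambda_{\max}(\Cbf_{t+1})\le\lambda_{\max}(\text{linear part})+\|\Ebf_t\|^2_{\mathrm{op}}\) and \(\lambda_{\min}(\Cbf_{t+1})\ge\lambda_{\min}(\text{linear part})\), with \(\|\Ebf_t\|_{\mathrm{op}}\le\sqrt{\kappa_\varepsilon}(\gamma_t+\rho d_t)\).

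\textbf{Collecting terms.} Adding everything gives a coefficient on \(d_t\) of the form
\[
1-2\rho+2\rho\varepsilon+\rho^2\varepsilon\kappa_\varepsilon+2\gamma_t\rho\kappa_\varepsilon = 1-\rho\Delta_{\varepsilon,\rho}+2\gamma_t\rho\kappa_\varepsilon,
\]
using \(2-2\varepsilon-\rho\varepsilon\kappa_\varepsilon=\Delta_{\varepsilon,\rho}\). The contraction cap \(\gamma_t\le\Delta_{\varepsilon,\rho}/(4\kappa_\varepsilon)\) gives \(2\gamma_t\rho\kappa_\varepsilon\le\rho\Delta_{\varepsilon,\rho}/2\), so the coefficient is at most \(1-\rho\Delta_{\varepsilon,\rho}/2=q_{\varepsilon,\rho}\). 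The remaining term is \(\kappa_\varepsilon\gamma_t^2\), which yields \eqref{eq:separate-steps-residual-recursion}. This bookkeeping is the main obstacle: the \(d_t^2\)-order pieces are converted to \(d_t\) via \(d_t\le\varepsilon\), and the constants must land exactly on \(\Delta_{\varepsilon,\rho}\). We also check \(q_{\varepsilon,\rho}\in(0,1)\); this holds because \(\rho<1/2\) and \(\Delta_{\varepsilon,\rho}\le2\) give \(a_{\varepsilon,\rho}<1/2\).

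\textbf{Floor bound.} Unrolling the recursion with \(\gamma_j\le\bar\gamma\) gives
\[
d_t\le q^t d_0+\kappa_\varepsilon\bar\gamma^2\sum_{j<t}q^j=q^t d_0+\frac{\kappa_\varepsilon}{a_{\varepsilon,\rho}}(1-q^t)\bar\gamma^2,
\]
which is \eqref{eq:separate-steps-residual-floor}.
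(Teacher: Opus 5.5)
Your proof is correct and follows essentially the same route as the paper: you start from the Gram identity, bound the linear part by \((1-2\rho)d_t+2\rho d_t^2\), and bound \(\Ebf_t^\top\Ebf_t\) by \((1+d_t)(\gamma_t+\rho d_t)^2\). Your term-by-term expansion reproduces that last bound exactly. You then use \(d_t\le\varepsilon\) and the cap \(\gamma_t\le\gamma^{\mathrm{ctr}}_{\varepsilon,\rho}\) to reach \(q_{\varepsilon,\rho}\), and unroll geometrically, all as the paper does. Two remarks on presentation: the one-sided eigenvalue route is never used and can be dropped, and the quadratic term does not \emph{correct} the linear-part constant. It only adds the \(\rho^2\varepsilon\kappa_\varepsilon+2\gamma_t\rho\kappa_\varepsilon\) contributions, which the step-size cap then absorbs.
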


For constant \(\gamma_t=\gamma_T\), the residual-floor bound separates a geometric initialization transient from an \(\mathcal O(\gamma_T^2/\rho)\) residual floor, compared with \(\mathcal O(\tau_T)\) for \algname{}. The contraction factor \(q_{\varepsilon,\rho}\) is independent of the tangential step size. Summing the recursion also yields a pathwise bound on the full-run average residual; see Appendices~\ref{app:separate-steps-safeguard-proof} and~\ref{app:separate-steps-residual-proof} for details.

\subsection{Main Convergence Results}
\label{subsec:separate-steps-horizon-rates}
\label{subsec:separate-steps-finite-time}

Combining the feasibility bounds above with the objective-descent and weighted momentum-tracking estimates established in Appendices~\ref{app:separate-steps-objective-descent-proof}--\ref{app:separate-steps-tracking-accumulation} yields the following finite-time guarantee. Retain \(\Delta_f=f(\Xbf_0)-f_\varepsilon^{\inf}\) and \(\nu_{\varepsilon,B}\) from Section~\ref{subsec:coupled-horizon-rates}, and set \(A_{\varepsilon,\rho}:=\rho G_\varepsilon^{\mathrm{sym}}+L_*\kappa_\varepsilon\rho^2\varepsilon\). In this section, the step-size sums are
\(G_T:=\sum_{t=0}^{T-1}\gamma_t\), \(Q_T:=\sum_{t=0}^{T-1}\gamma_t^2\), and \(\bar\gamma_T:=\max_{0\le t<T}\gamma_t\); we further define
\begin{equation}
\begin{aligned}
\mathcal Q_T^{\mathrm{feas}}
&:=
\frac{d_0+\kappa_\varepsilon Q_T}{a_{\varepsilon,\rho}T},
\\
\mathcal Q_T^{\mathrm{track}}
&:=
\nu_{\varepsilon,B}
\Bsbra{
\frac{\sum_{t=0}^{T-1}\gamma_t\beta^t}{G_T}
+
\sqrt{\frac{1-\beta}{1+\beta}}
}
\\
&\quad+
L_{\Psi,\varepsilon}\sqrt{\kappa_\varepsilon}
\frac{\beta}{1-\beta}
\Bsbra{
\frac{Q_T}{G_T}
+
\frac{\rho\bar\gamma_T}{a_{\varepsilon,\rho}G_T}
(d_0+\kappa_\varepsilon Q_T)
},
\\
\mathcal Q_T^{\mathrm{stat}}
&:=
\frac{\Delta_f}{G_T}
+2\mathcal Q_T^{\mathrm{track}}
+
\frac{A_{\varepsilon,\rho}}{a_{\varepsilon,\rho}G_T}
(d_0+\kappa_\varepsilon Q_T)
+
L_*\kappa_\varepsilon\frac{Q_T}{G_T}.
\end{aligned}
\label{eq:separate-steps-rate-quantities}
\end{equation}

\begin{theorem}
\label{thm:separate-steps-finite-time}
Suppose Assumptions~\ref{ass:operator-nuclear-smoothness} and
\ref{ass:stochastic-oracle} hold,
$\Xbf_0\in\mathcal S_\varepsilon$,
$0<\rho<\bar\rho_\varepsilon$,
$\beta\in[0,1)$, and
the deterministic target step sizes satisfy $0<\gamma_t^{\mathrm{tar}}\le\gamma_{\varepsilon,\rho}^{\mathrm{adm}}$ for $t=0,\ldots,T-1$. Then

\begin{equation}
\label{eq:separate-steps-finite-time-feasibility}
\frac1T\sum_{t=0}^{T-1}d_t
\le\mathcal Q_T^{\mathrm{feas}}
\quad\text{pathwise},
\qquad
\frac{\sum_{t=0}^{T-1}\gamma_t\mathbb E\|\Psibf_t\|_*}{G_T}
\le\mathcal Q_T^{\mathrm{stat}}.
\end{equation}

\end{theorem}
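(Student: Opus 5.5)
The plan is to mirror the proof of Theorem~\ref{thm:coupled-finite-time}, with Lemmas~\ref{lem:separate-steps-safeguard} and~\ref{lem:separate-steps-residual-recursion} in place of their coupled counterparts. Since $\gamma_t^{\mathrm{tar}}\le\gamma_{\varepsilon,\rho}^{\mathrm{adm}}$, the safeguard is inactive, $\gamma_t=\gamma_t^{\mathrm{tar}}$ is deterministic, and all iterates stay in $\mathcal S_\varepsilon$ pathwise. The feasibility bound comes from summing \eqref{eq:separate-steps-residual-recursion} over $t=0,\dots,T-1$:
\[
(1-q_{\varepsilon,\rho})\sum_{t<T}d_t\le d_0-d_T+\kappa_\varepsilon Q_T .
\]
Dividing by $a_{\varepsilon,\rho}T$ gives $\mathcal Q_T^{\mathrm{feas}}$. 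The same sum also yields $\sum_t d_t\le(d_0+\kappa_\varepsilon Q_T)/a_{\varepsilon,\rho}$, and this unweighted residual sum will be used repeatedly below.

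For stationarity, I would apply Proposition~\ref{prop:shared-basic-estimates}\ref{prop:shared-basic-estimates-descent} with $\Ybf=\Xbf_{t+1}=\Xbf_t-\Ebf_t$.

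\textbf{Tangential term.} It equals $-\gamma_t\langle\nabla f(\Xbf_t),\Omegabf_t\Xbf_t\rangle=-\gamma_t\langle\Psibf_t,\Omegabf_t\rangle$, because $\Omegabf_t$ is skew. By \eqref{eq:relative-gradient-polar-alignment} this is at most $-\gamma_t\|\Psibf_t\|_*+2\gamma_t\|\Hbf_t-\Psibf_t\|_*$.

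\textbf{Normal term.} It is $-\rho\langle\nabla f(\Xbf_t),\Xbf_t\Cbf_t\rangle\le\rho G^{\mathrm{sym}}_\varepsilon d_t$ by part \ref{prop:shared-basic-estimates-coupling}.

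\textbf{Quadratic term.} Using $\|\Ebf_t\|_{\mathrm{op}}\le\sqrt{\kappa_\varepsilon}(\gamma_t+\rho d_t)$, $(x+y)^2\le2x^2+2y^2$ and $d_t^2\le\varepsilon d_t$, I bound $\tfrac{L_*}{2}\|\Ebf_t\|_{\mathrm{op}}^2\le L_*\kappa_\varepsilon(\gamma_t^2+\rho^2\varepsilon d_t)$.

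Summing over $t$ and telescoping $f$ down to $f_\varepsilon^{\inf}$, then inserting the residual sum, produces the terms $\Delta_f$, $A_{\varepsilon,\rho}(d_0+\kappa_\varepsilon Q_T)/a_{\varepsilon,\rho}$ and $L_*\kappa_\varepsilon Q_T$. I then take expectations and divide by $G_T$.

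The remaining and hardest step is the weighted tracking bound: I need $\sum_t\gamma_t\mathbb E\|\Hbf_t-\Psibf_t\|_*\le G_T\,\mathcal Q_T^{\mathrm{track}}$. I would unroll the EMA as
\[
\Hbf_t-\Psibf_t=\beta^t(\Psibf_0^{(B)}-\Psibf_0)+(1-\beta)\sum_{s\le t}\beta^{t-s}\bigl(\Psibf_s^{(B)}-\Psibf_s\bigr)+\sum_{s=1}^t\beta^{t-s+1}\bigl(\Psibf_{s-1}-\Psibf_s\bigr),
\]
splitting it into a noise part and a drift part. The noise part is a sum of martingale differences, since $\gamma$ is deterministic and the samples are fresh. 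Orthogonality of these differences in Frobenius norm, part \ref{prop:shared-basic-estimates-sampling}, and $\|\cdot\|_*\le\sqrt n\|\cdot\|_F$ give $\nu_{\varepsilon,B}\bigl[\beta^t+\sqrt{(1-\beta)/(1+\beta)}\bigr]$.

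For the drift part, part \ref{prop:shared-basic-estimates-psi-lipschitz} gives $\|\Psibf_{s-1}-\Psibf_s\|_*\le L_{\Psi,\varepsilon}\sqrt{\kappa_\varepsilon}(\gamma_{s-1}+\rho d_{s-1})$. The main obstacle is weighting this by $\gamma_t$ and swapping the order of summation. The $\gamma_{s-1}$ piece needs $\gamma_t\gamma_{s-1}\le(\gamma_t^2+\gamma_{s-1}^2)/2$, which gives $\tfrac{\beta}{1-\beta}Q_T$. The $\rho d_{s-1}$ piece should be bounded by $\gamma_t\le\bar\gamma_T$, followed by the pathwise residual-sum bound from the first step; this gives $\tfrac{\beta}{1-\beta}\rho\bar\gamma_T(d_0+\kappa_\varepsilon Q_T)/a_{\varepsilon,\rho}$. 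The two pieces together match the bracket in $\mathcal Q_T^{\mathrm{track}}$. Substituting this tracking bound into the descent sum, with factor $2$, completes \eqref{eq:separate-steps-finite-time-feasibility}.
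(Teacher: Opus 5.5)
Your proposal is correct and follows the paper's proof essentially step for step: the feasibility bound comes from summing the residual recursion \eqref{eq:separate-steps-residual-recursion}, the descent step is the inequality of Proposition~\ref{prop:separate-steps-objective-descent} with the same split of $(\gamma_t+\rho d_t)^2$, and the weighted tracking uses $2\gamma_t\gamma_j\le\gamma_t^2+\gamma_j^2$ for the tangential convolution and $\gamma_t\le\bar\gamma_T$ together with the pathwise residual-sum bound for the normal convolution. The only difference is that you re-derive the EMA noise/drift decomposition inline instead of citing Proposition~\ref{prop:generic-momentum-tracking}; in that decomposition the averaged noise sum should run over $1\le s\le t$, because you have already written the $s=0$ term separately.
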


The proof is given in Appendix~\ref{app:separate-steps-finite-time-proof}. Choosing a horizon-dependent constant tangential step within each run yields the following rates.
Let \(R\sim\operatorname{Unif}\{0,\ldots,T-1\}\) be independent of the algorithmic randomness, and let \(\mathbb E_R\) denote expectation over \(R\) conditional on the iterates.

\begin{corollary}[Stochastic-momentum rate]
\label{cor:separate-steps-stochastic-momentum}
Suppose Assumptions~\ref{ass:operator-nuclear-smoothness} and~\ref{ass:stochastic-oracle} hold and \(\Xbf_0\in\mathcal S_\varepsilon\). Fix \(0<\rho<\bar\rho_\varepsilon\) and the mini-batch size \(B\) independently of \(T\). Choose
$\gamma_t^{\mathrm{tar}}\equiv\gamma_T:=\gamma_0T^{-3/4}$, $\beta=\beta_T:=1-\beta_0T^{-1/2}$,
where $\gamma_0>0$ and $\beta_0\in(0,1]$.
Then, for all sufficiently large $T$,

\begin{equation}
\label{eq:separate-steps-stochastic-rates}
\mathbb E\|\Psibf_R\|_*=\mathcal O(T^{-1/4}),\qquad
\mathbb E_R[d_R]=\mathcal O(d_0/T+T^{-3/2})\quad\text{pathwise}.
\end{equation}

If $d_0=0$, the stronger pathwise feasibility guarantees hold:

\begin{equation}
\label{eq:separate-steps-stochastic-pathwise-feasibility}
\max_{0\le t\le T}d_t=\mathcal O(T^{-3/2}).
\end{equation}

\end{corollary}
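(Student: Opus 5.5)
The plan is to specialize Theorem~\ref{thm:separate-steps-finite-time} to the constant schedule \(\gamma_t\equiv\gamma_T=\gamma_0T^{-3/4}\), \(\beta=\beta_T=1-\beta_0T^{-1/2}\), and read off the orders of each term in \eqref{eq:separate-steps-rate-quantities}.

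\textbf{Safeguard inactive.} Since \(\gamma_{\varepsilon,\rho}^{\mathrm{adm}}\) depends only on \(\varepsilon,\rho\), there is a \(T_0\) with \(\gamma_T\le\gamma_{\varepsilon,\rho}^{\mathrm{adm}}\) for all \(T\ge T_0\). For such \(T\), Lemma~\ref{lem:separate-steps-safeguard} and the remark after it give \(\gamma_t=\gamma_T\) pathwise, and the hypotheses of Theorem~\ref{thm:separate-steps-finite-time} hold. The step-size sums are then
\[
G_T=\gamma_0T^{1/4},\qquad Q_T=\gamma_0^2T^{-1/2},\qquad \bar\gamma_T=\gamma_T.
\]

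\textbf{Stationarity.} Because \(\gamma_t\) is constant, the \(\gamma_t\)-weighted average in \eqref{eq:separate-steps-finite-time-feasibility} equals \(\mathbb E\|\Psibf_R\|_*\) for uniform \(R\). I will bound \(\mathcal Q_T^{\mathrm{stat}}\) term by term.
\begin{itemize}[leftmargin=*]
\item \(\Delta_f/G_T=\mathcal O(T^{-1/4})\).
\item \(A_{\varepsilon,\rho}(d_0+\kappa_\varepsilon Q_T)/(a_{\varepsilon,\rho}G_T)=\mathcal O(T^{-1/4})\), using \(d_0\le\varepsilon\).
\item \(L_*\kappa_\varepsilon Q_T/G_T=\mathcal O(T^{-3/4})\).
\end{itemize}
For \(\mathcal Q_T^{\mathrm{track}}\):
\begin{itemize}[leftmargin=*]
\item The initialization term is \(\sum_t\gamma_T\beta^t/G_T\le 1/((1-\beta)T)=\beta_0^{-1}T^{-1/2}\).
\item The variance term is \(\sqrt{(1-\beta)/(1+\beta)}\le\sqrt{\beta_0}\,T^{-1/4}\).
\item The drift term carries the factor \(\beta/(1-\beta)\le\beta_0^{-1}T^{1/2}\). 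It multiplies \(Q_T/G_T=\gamma_0T^{-3/4}\), giving \(\mathcal O(T^{-1/4})\). It also multiplies \(\rho\gamma_T(d_0+\kappa_\varepsilon Q_T)/(a_{\varepsilon,\rho}G_T)=\mathcal O(T^{-3/4}\cdot T^{-1/4})=\mathcal O(T^{-1})\), giving \(\mathcal O(T^{-1/2})\).
\end{itemize}
Summing, \(\mathbb E\|\Psibf_R\|_*=\mathcal O(T^{-1/4})\).

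\textbf{Feasibility, average.} Pathwise, \(\mathbb E_R[d_R]=\frac1T\sum_t d_t\le\mathcal Q_T^{\mathrm{feas}}=(d_0+\kappa_\varepsilon\gamma_0^2T^{-1/2})/(a_{\varepsilon,\rho}T)=\mathcal O(d_0/T+T^{-3/2})\).

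\textbf{Feasibility, maximum.} When \(d_0=0\), apply the floor bound \eqref{eq:separate-steps-residual-floor} with \(\bar\gamma=\gamma_T\). This gives \(d_t\le c^{\mathrm{res}}_{\varepsilon,\rho}\gamma_T^2=\mathcal O(T^{-3/2})\) for every \(t\le T\), including \(t=T\), since the bound for \(d_T\) only uses \(\gamma_j\) with \(j<T\).

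\textbf{Main obstacle.} The delicate step is the \(\beta/(1-\beta)\sim T^{1/2}\) blow-up in the drift part of the tracking error. The bound works only because this factor multiplies quantities that are \(\mathcal O(T^{-3/4})\) or smaller. I need to confirm this for the extra normal-step contribution \(\rho\bar\gamma_T(d_0+\kappa_\varepsilon Q_T)/(a_{\varepsilon,\rho}G_T)\), which carries \(d_0\) and is not small in \(d_0\). It is still harmless: the factor \(\bar\gamma_T/G_T=T^{-1}\) makes it \(\mathcal O(T^{-1})\) before the blow-up, hence \(\mathcal O(T^{-1/2})\) after. All other steps are routine order counting.
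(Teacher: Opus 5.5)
Your proposal is correct and follows the paper's route. You specialize Theorem~\ref{thm:separate-steps-finite-time} to the constant schedule: the safeguard is inactive for large \(T\), and \(G_T=T\gamma_T\), \(Q_T=T\gamma_T^2\), \(\bar\gamma_T=\gamma_T\). You then count orders term by term in \(\mathcal Q_T^{\mathrm{stat}}\) and \(\mathcal Q_T^{\mathrm{feas}}\), including the \(\beta_T/(1-\beta_T)\) drift factor against \(\gamma_T\) and against \(\rho\bar\gamma_T(d_0+\kappa_\varepsilon Q_T)/(a_{\varepsilon,\rho}G_T)\), and your counts match the paper's explicit bound \eqref{eq:separate-steps-stochastic-stationarity-explicit}; for \(d_0=0\) you apply the residual-floor estimate \eqref{eq:separate-steps-residual-floor} with \(\bar\gamma=\gamma_T\), as the paper does.
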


The stochastic rate is proved in Appendix~\ref{app:separate-steps-stochastic-rate-proof}.

\begin{corollary}[Deterministic-momentum rate]
\label{cor:separate-steps-deterministic-no-momentum}
Suppose Assumption~\ref{ass:operator-nuclear-smoothness} holds and exact gradients are used. Let \(\Xbf_0\in\mathcal S_\varepsilon\), fix \(0<\rho<\bar\rho_\varepsilon\) and \(\beta\in[0,1)\) independently of \(T\), and choose \(\gamma_t^{\mathrm{tar}}\equiv\gamma_T:=\gamma_0T^{-1/2}\), where \(\gamma_0>0\) is fixed. For all sufficiently large \(T\), \(\gamma_t=\gamma_T\) throughout the run, and
\begin{equation}
\label{eq:separate-steps-deterministic-stationarity-explicit}
\begin{aligned}
\mathbb E_R\|\Psibf_R\|_*
\le{}&
\frac{\Delta_f+A_{\varepsilon,\rho}d_0/a_{\varepsilon,\rho}}
{\gamma_0\sqrt T}
+\frac{\gamma_0}{\sqrt T}
\left(
2L_{\Psi,\varepsilon}\sqrt{\kappa_\varepsilon}\frac{\beta}{1-\beta}
+\frac{A_{\varepsilon,\rho}\kappa_\varepsilon}{a_{\varepsilon,\rho}}
+L_*\kappa_\varepsilon
\right)\\
&+\frac{2L_{\Psi,\varepsilon}\sqrt{\kappa_\varepsilon}\beta\rho(d_0+\kappa_\varepsilon\gamma_0^2)}
{(1-\beta)a_{\varepsilon,\rho}T}.
\end{aligned}
\end{equation}
and
\begin{equation}
\label{eq:separate-steps-deterministic-feasibility-explicit}
\frac1T\sum_{t=0}^{T-1}d_t\le\frac{d_0+\kappa_\varepsilon\gamma_0^2}{a_{\varepsilon,\rho}T}.
\end{equation}
Consequently,
\begin{equation}
\label{eq:separate-steps-deterministic-rates}
\mathbb E_R\|\Psibf_R\|_*=\mathcal O(T^{-1/2}),\qquad
\mathbb E_R[d_R]=\mathcal O(T^{-1})\quad\text{pathwise}.
\end{equation}
If \(d_0=0\), then
\begin{equation}
\label{eq:separate-steps-deterministic-pathwise-feasibility}
\max_{0\le t\le T}d_t
\le\frac{c_{\varepsilon,\rho}^{\mathrm{res}}\gamma_0^2}{T}.
\end{equation}
In particular, setting \(\beta=0\) recovers the deterministic no-momentum case.
\end{corollary}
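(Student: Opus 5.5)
The plan is to specialize Theorem~\ref{thm:separate-steps-finite-time} and Lemma~\ref{lem:separate-steps-residual-recursion} to the constant schedule \(\gamma_t^{\mathrm{tar}}\equiv\gamma_T=\gamma_0T^{-1/2}\) with \(\beta\) fixed and \(\sigma=0\).

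\textbf{Step 1 (safeguard inactive).} Since \(\gamma_{\varepsilon,\rho}^{\mathrm{adm}}>0\) does not depend on \(T\), we have \(\gamma_0T^{-1/2}\le\gamma_{\varepsilon,\rho}^{\mathrm{adm}}\) for all \(T\ge(\gamma_0/\gamma_{\varepsilon,\rho}^{\mathrm{adm}})^2\). Then the remark preceding Lemma~\ref{lem:separate-steps-residual-recursion} gives \(\gamma_t=\gamma_T\) pathwise, and the hypotheses of Theorem~\ref{thm:separate-steps-finite-time} hold. Moreover,
\[
G_T=T\gamma_T=\gamma_0\sqrt T,\qquad Q_T=T\gamma_T^2=\gamma_0^2,\qquad \bar\gamma_T=\gamma_T .
\]

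\textbf{Step 2 (feasibility).} Substituting into \(\mathcal Q_T^{\mathrm{feas}}\) gives \((d_0+\kappa_\varepsilon\gamma_0^2)/(a_{\varepsilon,\rho}T)\), which is \eqref{eq:separate-steps-deterministic-feasibility-explicit}. Because \(R\) is uniform, \(\mathbb E_R[d_R]\) equals this average, so we get \(\mathcal O(T^{-1})\).

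For \(d_0=0\), apply \eqref{eq:separate-steps-residual-floor} with \(\bar\gamma=\gamma_T\). This yields
\[
d_t\le c^{\mathrm{res}}_{\varepsilon,\rho}(1-q^t)\gamma_T^2\le c^{\mathrm{res}}_{\varepsilon,\rho}\gamma_0^2/T
\]
for all \(t\le T\), which is \eqref{eq:separate-steps-deterministic-pathwise-feasibility}. The case \(t=T\) is covered because the recursion holds at step \(T-1\).

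\textbf{Step 3 (stationarity).} With exact gradients, \(\nu_{\varepsilon,B}=0\). I will justify this by noting that Proposition~\ref{prop:shared-basic-estimates}\ref{prop:shared-basic-estimates-sampling} holds with \(\sigma=0\), since exact gradients form a degenerate oracle satisfying Assumption~\ref{ass:stochastic-oracle}. Then
\[
\mathcal Q_T^{\mathrm{track}}=L_{\Psi,\varepsilon}\sqrt{\kappa_\varepsilon}\frac{\beta}{1-\beta}\Big[\frac{\gamma_0}{\sqrt T}+\frac{\rho(d_0+\kappa_\varepsilon\gamma_0^2)}{a_{\varepsilon,\rho}T}\Big],
\]
using \(\rho\bar\gamma_T/G_T=\rho/T\).

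The remaining terms of \(\mathcal Q_T^{\mathrm{stat}}\) are:
\[
\frac{\Delta_f}{\gamma_0\sqrt T},\qquad
\frac{A_{\varepsilon,\rho}(d_0+\kappa_\varepsilon\gamma_0^2)}{a_{\varepsilon,\rho}\gamma_0\sqrt T},\qquad
\frac{L_*\kappa_\varepsilon\gamma_0}{\sqrt T}.
\]
Collecting the \(1/(\gamma_0\sqrt T)\) terms, the \(\gamma_0/\sqrt T\) terms, and the \(1/T\) term reproduces \eqref{eq:separate-steps-deterministic-stationarity-explicit} exactly. The left side \(\sum_t\gamma_t\|\Psibf_t\|_*/G_T\) equals \(\mathbb E_R\|\Psibf_R\|_*\) since the steps are constant. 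This gives the \(\mathcal O(T^{-1/2})\) rate, and \(\beta=0\) kills the momentum terms.

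\textbf{Main obstacle.} The only nontrivial point is legitimately setting \(\nu_{\varepsilon,B}=0\), that is, checking that the tracking accumulation bound underlying Theorem~\ref{thm:separate-steps-finite-time} degenerates cleanly when there is no sampling noise. The rest is bookkeeping of constants. I would check that the initial term \(\sum_t\gamma_t\beta^t/G_T\) is multiplied by \(\nu_{\varepsilon,B}\), so no residual \(\|\Hbf_0-\Psibf_0\|_*\) term survives; this holds because \(\Hbf_0=\Psibf_0\) exactly in the deterministic case.
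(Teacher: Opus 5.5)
Your proposal is correct, and the constants you collect match \eqref{eq:separate-steps-deterministic-stationarity-explicit} and \eqref{eq:separate-steps-deterministic-feasibility-explicit} term by term. The paper's proof differs only in packaging: it does not invoke Theorem~\ref{thm:separate-steps-finite-time} with a degenerate $\sigma=0$ oracle, and instead re-derives the noise-free tracking bound directly from $\Zbf_0=0$, the Lipschitz estimate for $\Psi$, the displacement bound \eqref{eq:separate-steps-displacement-bound}, and the telescoped residual sum, which yields exactly your $\nu_{\varepsilon,B}=0$ specialization of $\mathcal Q_T^{\mathrm{track}}$.
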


The proof is given in Appendix~\ref{app:separate-steps-deterministic-rate-proof}. Fixed momentum preserves the stationarity and feasibility orders; its additional contribution appears only in the stationarity bound.

\begin{remark}[Burn-in period]
\label{rem:burn-in-period}
Starting from an infeasible point, an initial burn-in period may be needed for the residual to reach \(O(\tau_T)\) for \algname{} and \(O(\gamma_T^2/\rho)\) for \algplus{}. The sufficient burn-in lengths are \(O(T^{3/4}\log T)\) and \(O(T^{1/2}\log T)\) for \algname{} in the stochastic and deterministic settings, respectively, but only \(O(\log T)\) for \algplus{} in both settings. See Appendices~\ref{app:coupled-tail-guarantees-proof} and~\ref{app:separate-steps-tail-guarantees-proof} for details.
\end{remark}

\section{Experiments}
\label{sec:experiments}

We evaluate the proposed methods through a series of experiments on synthetic benchmarks and orthogonality-constrained problems arising in machine learning and deep learning.
The proposed methods permit intermediate iterates outside the Stiefel
manifold, for which raw objective values may be affected by constraint
violation. 
Whenever a reference value is available, we evaluate objective quality using the objective gap after projection,
\(g_{\mathrm{proj}}(\Xbf)=f(\Pi_{\mathrm{St}}(\Xbf))-f_{\mathrm{ref}}\), where \(\Pi_{\mathrm{St}}(\Xbf)=\Xbf(\Xbf^\top\Xbf)^{-1/2}\).
Here \(f_{\mathrm{ref}}\) denotes the optimal value for linear Stiefel optimization and PCA, and the empirical reference for ICA. Projection is used only for objective evaluation and does not alter the iterates. Feasibility is measured by \(d(\Xbf)\), except for the neural-network residual defined below, and stationarity by \(\|\Psi(\Xbf)\|_*\) (Section~\ref{sec:preliminaries}).
Unless a comparison requires matched hyperparameters, each method is
tuned independently under the same tuning budget, and the
best-performing configuration within its search range is used.

\subsection{Linear Optimization over the Stiefel Manifold}
\label{subsec:exp1-linear-stiefel}

We first consider the deterministic linear optimization problem over the Stiefel manifold
\(\min_{\Xbf\in\operatorname{St}(n,p)}f(\Xbf)=-\operatorname{tr}(\Mbf^\top\Xbf)\).
If \(\Mbf=\Ubf\Sigmabf\Vbf^\top\) is a thin SVD of \(\Mbf\), then \(\Xbf^\star=\Ubf\Vbf^\top\) is an optimal feasible solution, with \(f_{\mathrm{ref}}=f^\star=-\|\Mbf\|_*\). We set \((n,p)=(200,50)\) and consider three
singular-value profiles: easy,
\(\sigma_i=10^{-(i-1)/(p-1)}\); ill-conditioned,
\(\sigma_i=10^{-4(i-1)/(p-1)}\); and block, with \(\sigma_i=1\) for
\(i\le p/2\) and \(\sigma_i=10^{-3}\) for \(i>p/2\). The closed-form
optimum enables direct evaluation of objective quality, feasibility,
and stationarity, as well as the residual-scaling behavior across
different spectral profiles.

\begin{figure}[tb]
\centering
\includegraphics[width=0.96\linewidth]{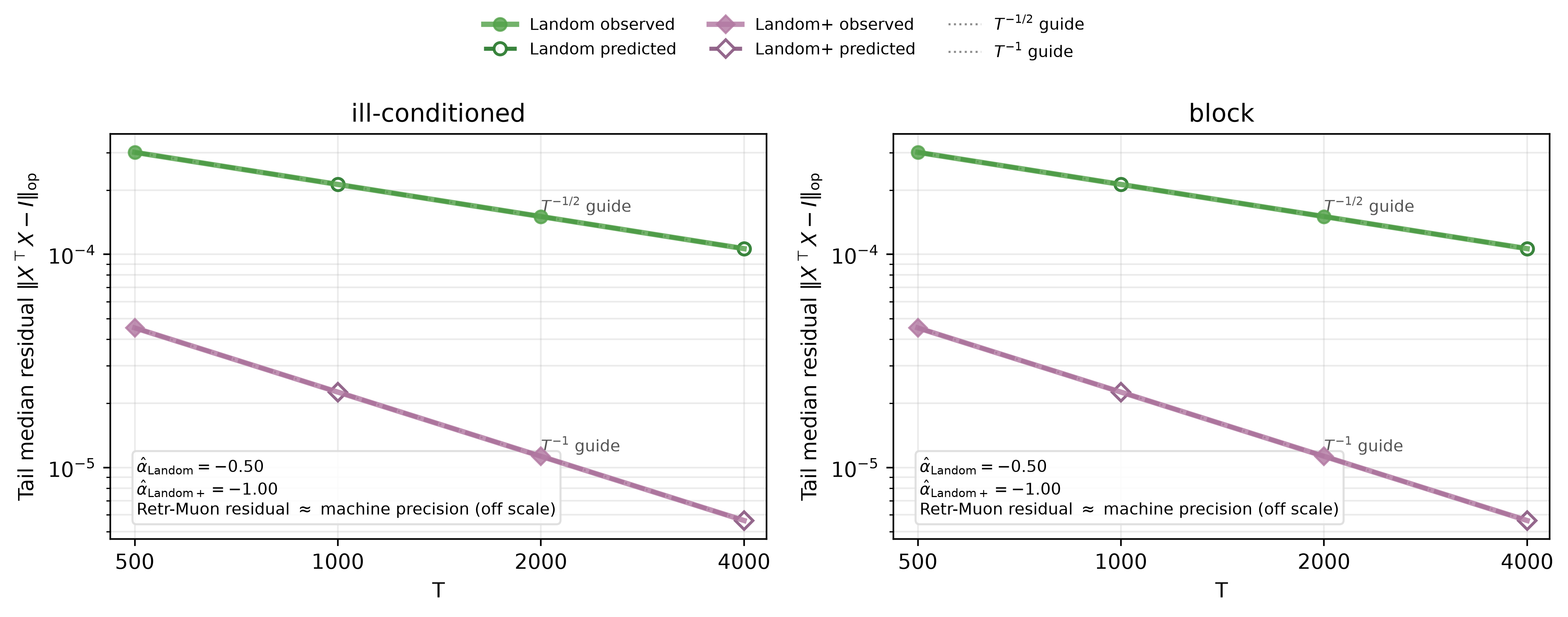}
\par\smallskip
\includegraphics[width=0.96\linewidth]{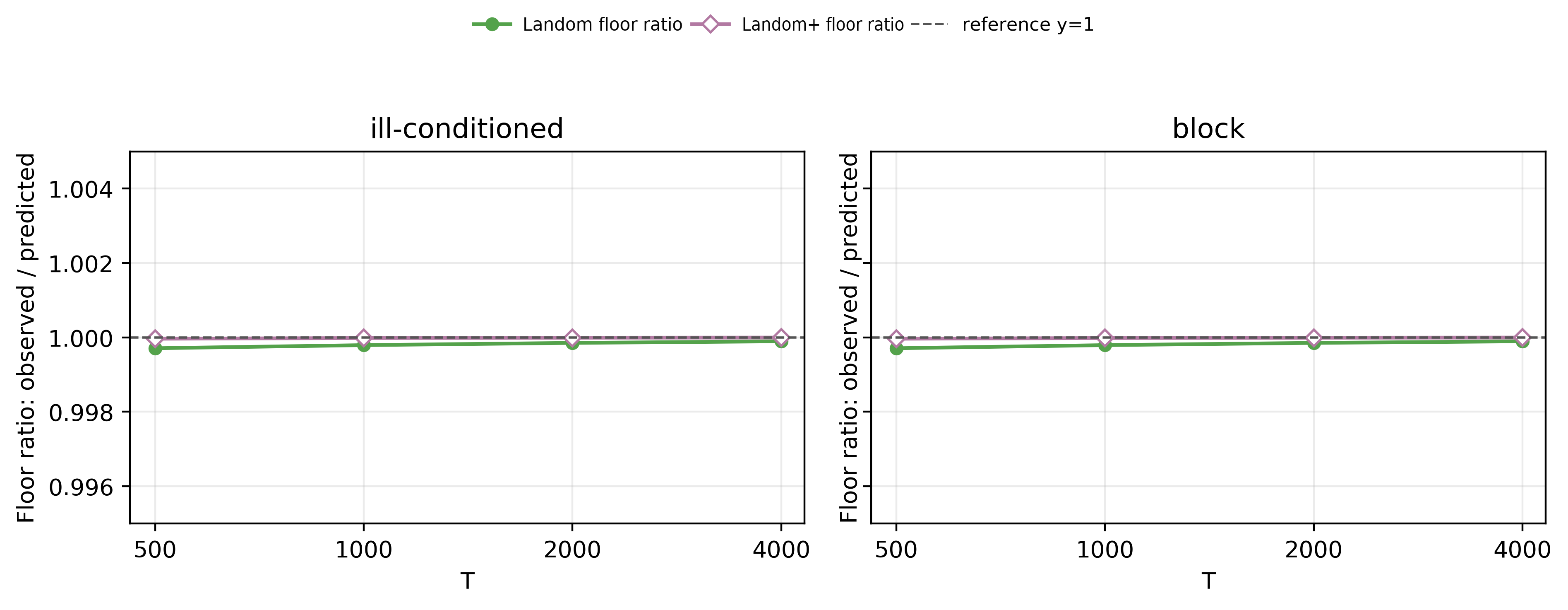}
\caption{\textbf{Residual scaling.} Top: tail median of
\(\|\Xbf_t^\top \Xbf_t-\Ibf_p\|_{\mathrm{op}}\) versus \(T\). Bottom:
observed-to-predicted residual-floor ratio.}
\label{fig:exp1-residual-scaling}
\end{figure}

We compare Landing-GD, a retraction-based polar-factor method (Retr-Muon in the figures), \algname{}, and \algplus{}. We use two step-size
settings. To examine the predicted residual scaling, for each
\(T\in\{500,1000,2000,4000\}\), the horizon experiment uses the constant
target tangential step size \(0.1344T^{-1/2}\) for all three polar-factor methods throughout the run,
while \algplus\ uses the fixed normal step size \(\rho=0.4\).
For the practical optimization comparison, we instead use the
diminishing target tangential schedule
\(1.3389\times10^{-3}(1+t/750)^{-0.75}\) for all three polar-factor methods, while \algplus\ uses \(\rho=0.498\).
Thus, their differences are not attributable to different tangential
step sizes. For both \algname{} and \algplus{}, we set the
safe-region parameter to \(\varepsilon=0.5\).
Figures~\ref{fig:exp1-optimization-dynamics}--%
\ref{fig:exp1-budget-sweep} use the same five seeds.
Figure~\ref{fig:exp1-optimization-dynamics} shows the mean trajectories
with one-standard-deviation bands, whereas
Figure~\ref{fig:exp1-budget-sweep} first takes the median over the final
\(20\%\) of the iterates for each seed and then aggregates across seeds.

\begin{figure}[tb]
\centering
\includegraphics[width=0.93\linewidth]{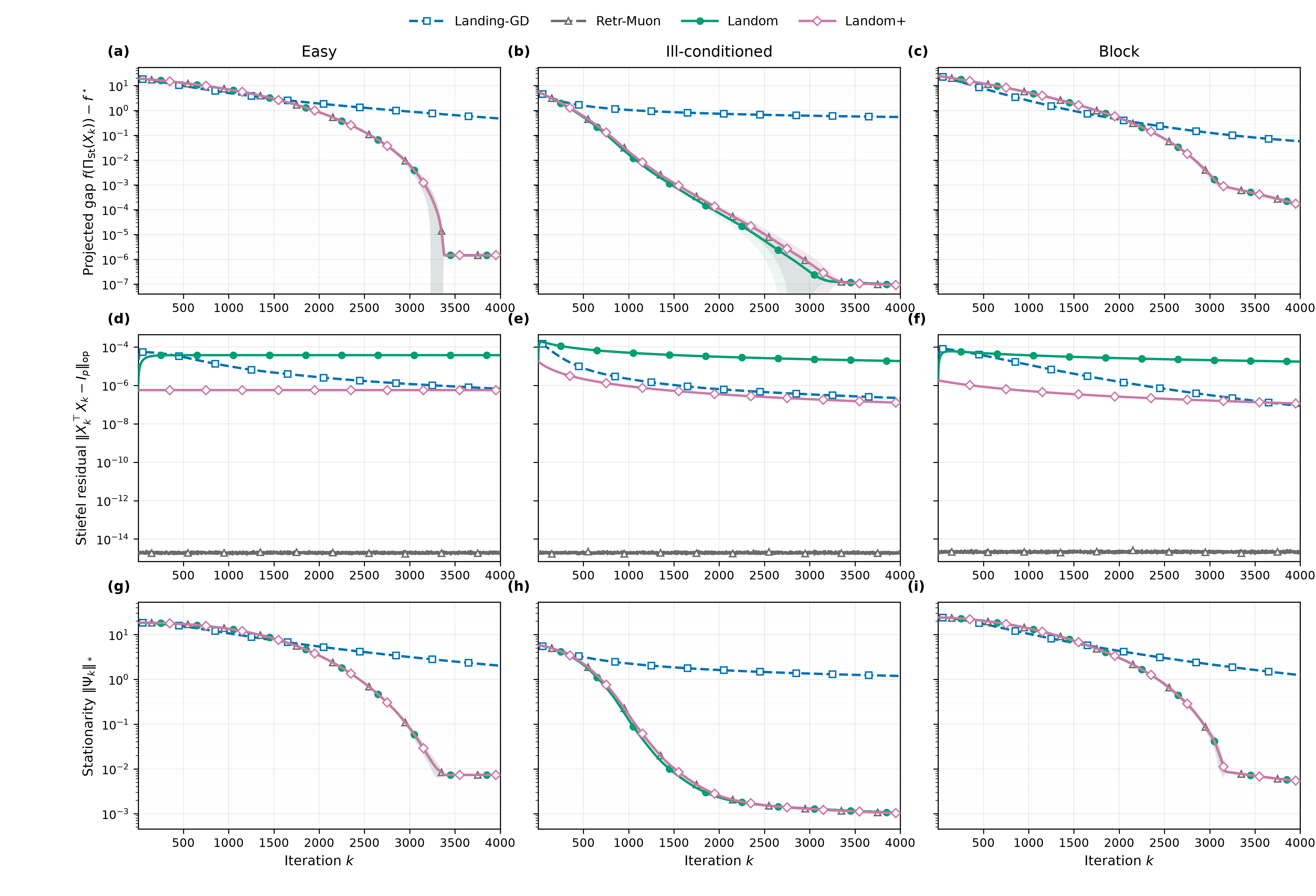}
\caption{\textbf{Optimization dynamics.} Objective gap after projection,
orthogonality residual, and nuclear-norm stationarity at \(T=4000\) for the
three spectral profiles. Curves show means over five seeds and shaded
regions indicate one standard deviation.}
\label{fig:exp1-optimization-dynamics}
\end{figure}

\paragraph{Residual scaling.}
For the horizon experiment, let
\(\mathcal I_T=\{\lceil0.8T\rceil,\ldots,T\}\) and measure the tail
residual by \(\operatorname{median}_{t\in\mathcal I_T}d(\Xbf_t)\).
Because the common target tangential step size scales as \(T^{-1/2}\),
Equation~\eqref{eq:coupled-residual-recursion} predicts an
\(O(T^{-1/2})\) residual floor for \algname{}, whereas
Equation~\eqref{eq:separate-steps-residual-floor} predicts an
\(O(T^{-1})\) residual floor for \algplus{} with fixed \(\rho\).
Figure~\ref{fig:exp1-residual-scaling} gives fitted log-log slopes of
approximately \(-0.50\) and \(-1.00\), respectively, in agreement
with these predictions, while the observed residuals remain close to
the corresponding predicted floors. The precise fitted slopes and
safeguard statistics are given in
Appendix~\ref{app:exp1-residual-scaling}.

\paragraph{Optimization performance.}
Figure~\ref{fig:exp1-optimization-dynamics} shows the optimization
trajectories at \(T=4000\). Across all three spectral profiles, the
polar-factor methods achieve substantially smaller objective gaps after
projection and nuclear-norm stationarity measures than the Landing gradient
baseline, demonstrating the advantage of the polar-factor directions
in objective optimization and stationarity. The retraction-based polar-factor baseline, \algname, and \algplus\ exhibit nearly
identical objective-gap and stationarity trajectories, while
\algplus\ attains a substantially smaller Stiefel residual
than \algname. The retraction-based method remains essentially
feasible.

Figure~\ref{fig:exp1-budget-sweep} shows the same overall behavior across
iteration budgets: the advantage of the polar-factor methods in the
objective gap after projection and in stationarity becomes clearer as
the budget increases,
while \algplus\ preserves this optimization behavior and improves
feasibility relative to \algname.

\begin{figure}[tb]
\centering
\includegraphics[width=0.93\linewidth]{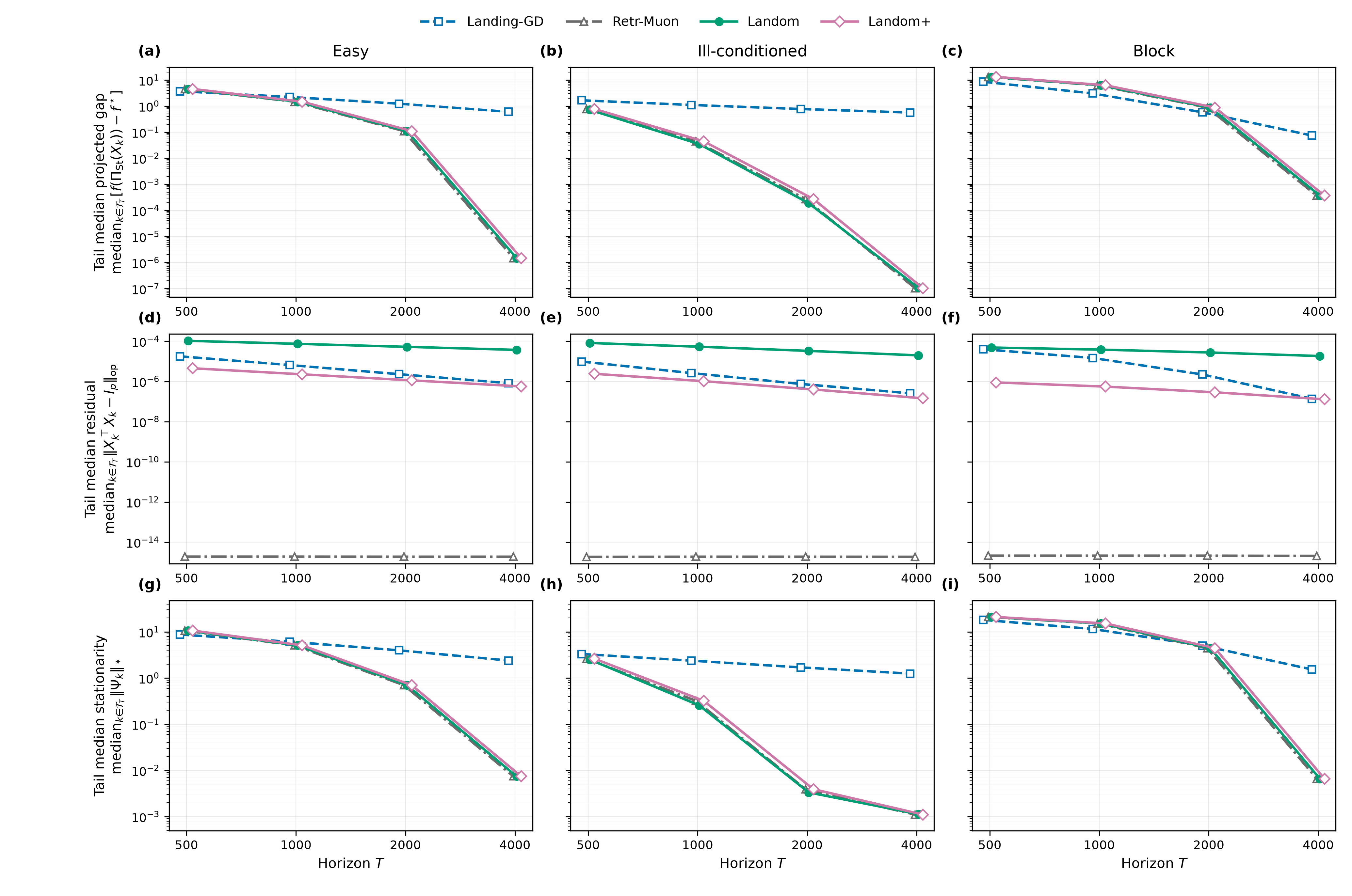}
\caption{\textbf{Budget dependence.} Tail median of the objective gap
after projection, orthogonality residual, and nuclear-norm stationarity for
\(T\in\{500,1000,2000,4000\}\) on the three spectral profiles. Markers
show means over five seeds and bands indicate one standard deviation.}
\label{fig:exp1-budget-sweep}
\end{figure}

\subsection{Stochastic Online PCA}
\label{subsec:exp2-online-pca}

We next consider stochastic online PCA, a standard benchmark for
optimization under orthogonality constraints \citep{ablin2024infeasible}.
Given a data matrix \(\Abf\in\mathbb{R}^{N\times n}\) and
\(\widehat{\Sigmabf}=N^{-1}\Abf^\top \Abf\), we solve
\(\min_{\Xbf\in\operatorname{St}(n,p)}f(\Xbf)=-\frac12\operatorname{tr}(\Xbf^\top\widehat{\Sigmabf}\Xbf)=-\frac1{2N}\|\Abf\Xbf\|_F^2\).
The optimal column space is the leading \(p\)-dimensional eigenspace of \(\widehat{\Sigmabf}\); any orthonormal basis provides an optimal feasible reference point attaining the same value \(f_{\mathrm{ref}}=f^\star=-\frac12\sum_{i=1}^p\lambda_i(\widehat{\Sigmabf})\). We use a
controlled-spectrum synthetic instance with \(N=15000\), \(n=2000\),
and \(p=100\), focusing on a top-\(p\) ill-conditioned spectrum whose
leading components have strongly nonuniform scales while retaining a
separated \(p\)-th eigengap. Stochastic gradients are computed with
mini-batches of size \(B=128\).

We compare Landing-SGD, Riemannian stochastic gradient descent with QR retractions (RSGD-QR), \algname{}, and \algplus{}. The two proposed methods use the same polar-factor construction but differ in whether the normal step is coupled to the tangential step. Since their per-iteration costs differ, we use a common 8-second clean GPU-time budget, covering mini-batch gradient evaluation and optimizer updates. All methods follow the same hardware and timing convention; projected-objective evaluation and diagnostics are excluded from timing. Results are aggregated over five random seeds, with pointwise means and interquartile-range bands.

\begin{figure}[!htbp]
\centering
\includegraphics[width=0.98\linewidth]{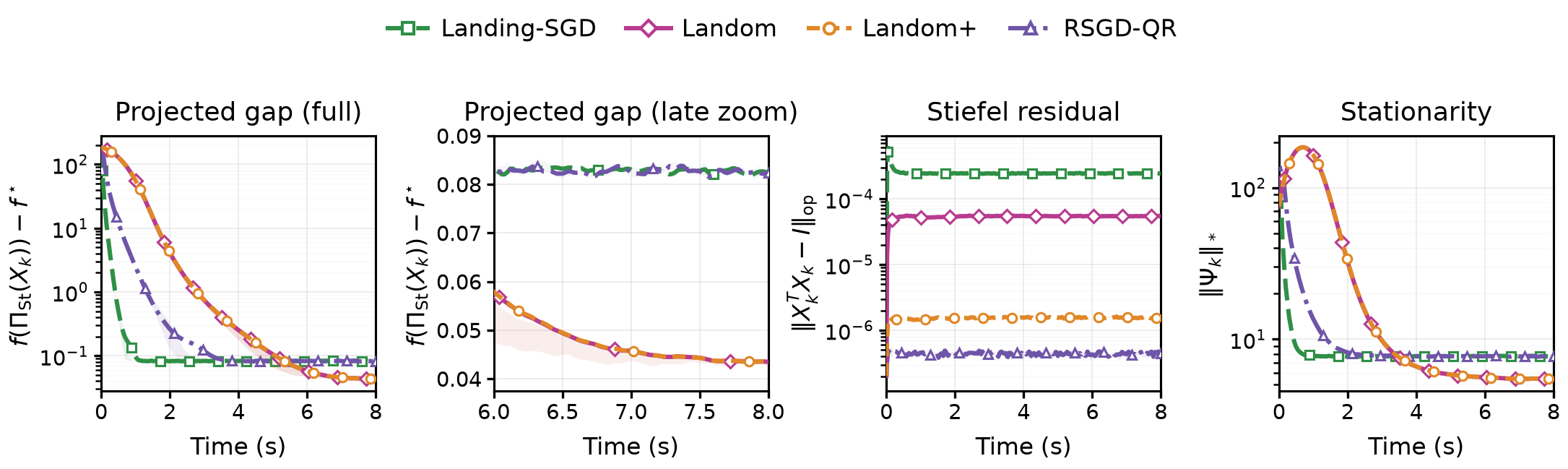}
\caption{\textbf{Equal-time online PCA.} Objective gap after projection (full and
late-time), Stiefel residual, and nuclear-norm stationarity on the
top-\(p\) ill-conditioned spectrum.}
\label{fig:exp2-top-p-main}
\end{figure}

Figure~\ref{fig:exp2-top-p-main} shows that, by the end of the common 8-second clean GPU-time budget, both \algname{} and \algplus{} attain smaller projected objective gaps and nuclear-norm stationarity measures than Landing-SGD and RSGD-QR. Their objective-gap and stationarity trajectories are nearly identical, whereas \algplus{} yields a substantially smaller Stiefel residual than \algname{}. RSGD-QR remains nearly feasible due to the QR retraction. These results support the benefit of polar-factor directions for objective optimization and of decoupled step sizes for feasibility; the latter is consistent with Equation~\eqref{eq:separate-steps-residual-recursion}. Additional results for the easy spectrum and the sensitivity to the normal step size are given in Appendix~\ref{app:exp2-online-pca}.

\FloatBarrier
\subsection{Real-Data Independent Component Analysis}
\label{subsec:exp3-eeg-ica}

We consider independent component analysis (ICA) on real
electroencephalography (EEG) data following the experimental setting of
\citet{xiong2026second}. Given a whitened data matrix
\(\Wbf\in\mathbb{R}^{N\times d}\), we solve
\(\min_{\Xbf\in\operatorname{St}(d,d)}f(\Xbf)=-\frac1N\sum_{i=1}^N\sum_{j=1}^d\log\cosh((\Wbf\Xbf)_{ij})\).
We use the MNE-Python sample dataset~\citep{gramfort2013mne}\footnote{\url{https://mne.tools/dev/documentation/datasets.html\#sample}} and follow the preprocessing of
\citet{xiong2026second}: the first \(60\) seconds of EEG signals are
high-pass filtered at \(1\) Hz, resampled to \(100\) Hz, centered, and
whitened using truncated SVD. This gives \(N=6000\), \(d=60\), and
\(\Xbf\in\operatorname{St}(60,60)\).

The main experiment compares Landing-GD, \algname{}, and \algplus{} using full-batch gradients over \(T=4000\) iterations. We report pointwise medians and interquartile ranges over six seeds. Since the global optimum is unknown, we use the shared empirical reference \(f_{\mathrm{ref}}:=\min_{\Xbf\in\mathcal R_{\mathrm{fb}}}f(\Pi_{\mathrm{St}}(\Xbf))\), where \(\mathcal R_{\mathrm{fb}}\) contains recorded iterates from the formal full-batch reference pool for Figure~\ref{fig:exp3-ica-fullbatch-main}. This is a feasible reference value, not a certified global optimum. All methods use preselected diminishing step sizes and the same initialization within each seed.

\begin{figure}[!htbp]
\centering
\includegraphics[width=0.98\linewidth]{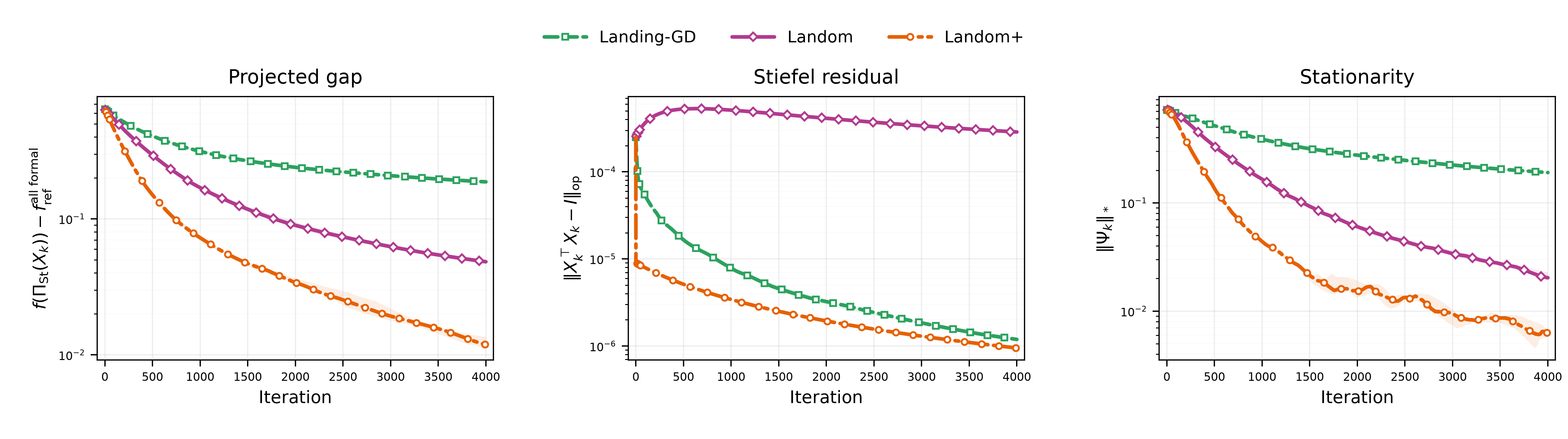}
\caption{\textbf{Real-data ICA.} Objective gap after projection, Stiefel
residual, and nuclear-norm stationarity in the full-batch setting.
Curves show pointwise medians over six seeds and shaded regions indicate
interquartile ranges.}
\label{fig:exp3-ica-fullbatch-main}
\end{figure}

Figure~\ref{fig:exp3-ica-fullbatch-main} shows that both \algname{} and \algplus{} attain smaller projected objective gaps and nuclear-norm stationarity measures than Landing-GD on this real-data, nonquadratic problem. At \(T=4000\), \algplus{} further improves both metrics over \algname{} and reduces its Stiefel residual by more than two orders of magnitude. These results support the effectiveness of polar-factor directions for objective optimization and of decoupled step sizes for feasibility control.

\subsection{Neural Networks with Orthogonality Constraints}
\label{subsec:exp4-neural-network}

Finally, we consider neural-network training with orthogonality
constraints on CIFAR-10~\citep{krizhevsky2009learning}. We use a three-layer bias-free multilayer perceptron (MLP) with dimensions \(3072\!\to\!128\!\to\!64\!\to\!10\) and weight matrices constrained to the corresponding Stiefel manifolds. The network is
trained for \(5\) epochs with batch size \(1024\), and the results are
aggregated over ten random seeds. For the three constrained layers, let \(\Xbf_\ell\in\mathbb R^{n_\ell\times p_\ell}\) denote the transpose of the stored weight matrix, with dimensions \(3072\times128\), \(128\times64\), and \(64\times10\), respectively. We measure orthogonality by the aggregate squared Frobenius residual \(D_{\mathrm{NN}}:=\sum_{\ell=1}^{3}\|\Xbf_\ell^\top\Xbf_\ell-\Ibf_{p_\ell}\|_F^2\).

We compare Landing-SGD and \algplus{} with Manifold Muon \citep{bernstein2025manifolds} and its ADMM variant (Muon-ADMM) \citep{buchanan2025mmuonadmm}. Manifold Muon uses an iterative dual solver for the tangent-space-constrained spectral steepest-descent subproblem, whereas Muon-ADMM employs ADMM for the corresponding inner problem. In contrast, \algplus{} requires no such inner solve and approximates the compact polar factor using Polar Express \citep{amsel2026polar}, avoiding an exact SVD at each update.

Figure~\ref{fig:exp4-nn} shows that \algplus{} achieves training loss and test accuracy comparable to Manifold Muon and Muon-ADMM, with a smaller aggregate orthogonality residual, while improving both metrics over Landing-SGD. Table~\ref{tab:exp4-training-time} reports steady-state train-only epoch times relative to Landing-SGD: \(1.13\times\) for \algplus{}, \(2.99\times\) for Manifold Muon, and \(4.77\times\) for Muon-ADMM.

\begin{figure}[ht]
\centering
\includegraphics[width=0.98\linewidth]{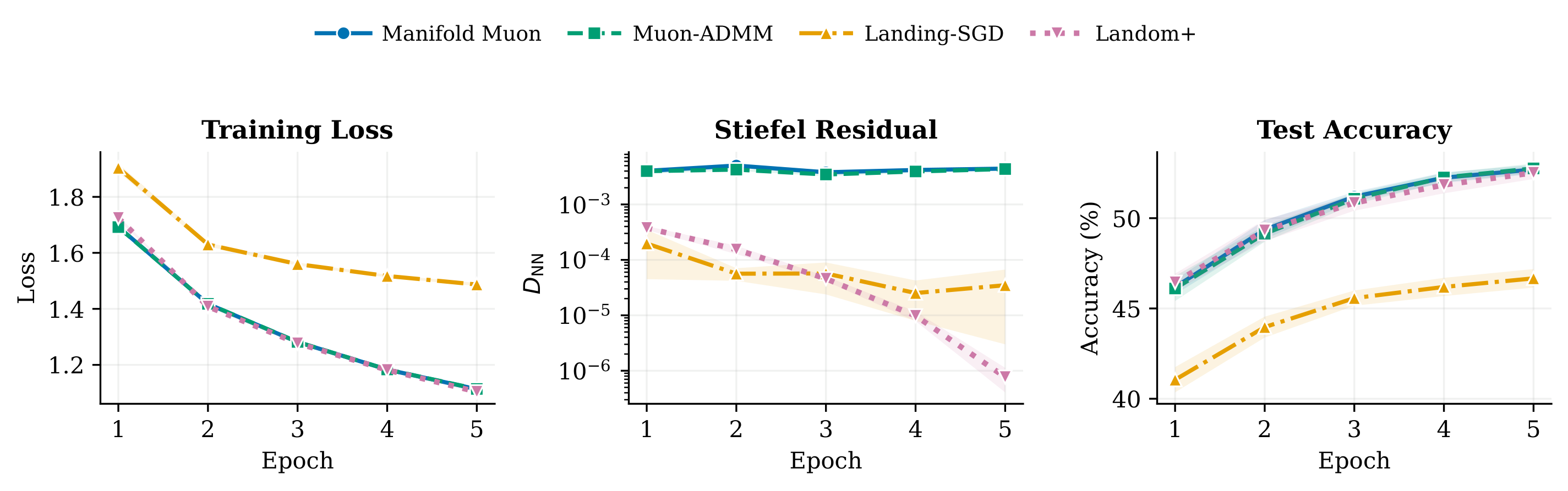}
\caption{\textbf{Orthogonally constrained neural network.} Training loss, aggregate squared Frobenius orthogonality residual \(D_{\mathrm{NN}}\), and test accuracy on CIFAR-10.}
\label{fig:exp4-nn}
\end{figure}

\begin{table}[ht]
\centering
\small
\setlength{\tabcolsep}{6pt}
\begin{tabular}{@{}llcc@{}}
\hline
Method & Inner solve & Time / epoch (s) & Relative time \\
\hline
Landing-SGD & None & \(5.25\pm0.14\) & \(1.00\times\) \\
\algplus & None & \(5.96\pm0.02\) & \(1.13\times\) \\
Manifold Muon & Dual ascent & \(15.72\pm0.19\) & \(2.99\times\) \\
Muon-ADMM & ADMM & \(25.08\pm0.10\) & \(4.77\times\) \\
\hline
\end{tabular}
\caption{\textbf{Training time.} Steady-state train-only wall-clock
time per epoch.}
\label{tab:exp4-training-time}
\vspace{1mm}
\parbox{0.96\linewidth}{\footnotesize Mean \(\pm\) standard deviation
over three timing repeats on a single NVIDIA GeForce RTX 4090. Epoch 1
is excluded as a cold start; evaluation and diagnostic computations are
excluded from the timing.}
\end{table}

\FloatBarrier
\section{Conclusion}
\label{sec:conclusion}

We proposed \algname{} and \algplus{} for retraction-free optimization over the Stiefel manifold. The methods attain \(O(T^{-1/2})\) and \(O(T^{-1/4})\) averaged nuclear-norm stationarity with exact gradients and stochastic momentum, respectively, with the stochastic bound holding in expectation. By separating the tangential and normal step sizes, \algplus{} improves the residual-floor bound from \(O(\tau_T)\) to \(O(\gamma_T^2/\rho)\) and reduces the sufficient burn-in to \(O(\log T)\), while preserving the stationarity rates. Experiments support the predicted feasibility behavior and demonstrate competitive optimization performance across several applications.

Future work includes extending this retraction-free design to other matrix manifolds and structured matrix constraints, with update directions and feasibility-restoration mechanisms adapted to their geometry. Another direction is to scale \algname{} and \algplus{} to large language model training and investigate efficient implementations and empirical behavior on large Transformer matrix parameters.

%% file: appendix.tex
\appendix
\section{Shared Geometric and Stochastic Estimates}
\label{app:shared-proofs}

This appendix proves the geometric identities and basic estimates from Section~\ref{sec:preliminaries} and the polar-alignment and momentum-tracking bounds for \algname{} and \algplus{}.

\subsection{Proof of Lemma~\ref{lem:safe-region-bounds}}
\label{app:shared-safe-region}

For any \(\Xbf\in\mathcal S_\varepsilon\), the condition \(\|\Xbf^\top\Xbf-\Ibf_p\|_{\mathrm{op}}\le\varepsilon\) yields \((1-\varepsilon)\Ibf_p\preceq\Xbf^\top\Xbf\preceq(1+\varepsilon)\Ibf_p\), which implies the stated singular-value and norm bounds, ensures full column rank since \(\varepsilon<1\), and establishes compactness of \(\mathcal S_\varepsilon\) by closedness and boundedness.

\subsection{Normal Space and Relative-Gradient Identities}
\label{app:shared-level-set-identities}

Fix \(\Xbf\in\mathcal S_\varepsilon\) and define \(F_{\Xbf}(\Ybf):=\Ybf^\top\Ybf-\Xbf^\top\Xbf\). Its derivative and adjoint satisfy \(\Df F_{\Xbf}(\Xbf)[\Zbf]=\Xbf^\top\Zbf+\Zbf^\top\Xbf\) and \(\Df F_{\Xbf}(\Xbf)^*[\Sbf]=2\Xbf\Sbf\) for every \(\Sbf\in\mathbb S^p\). Since \(\Pbf:=\Xbf^\top\Xbf\succ0\), choosing \(\Zbf=\frac12\Xbf\Pbf^{-1}\Sbf\) gives \(\Df F_{\Xbf}(\Xbf)[\Zbf]=\Sbf\). Hence, \(\Df F_{\Xbf}(\Xbf)\) is surjective and
\[
N_{\Xbf}\mathcal M_{\Xbf}=\operatorname{range}\Df F_{\Xbf}(\Xbf)^*=\{\Xbf\Sbf:\Sbf\in\mathbb S^p\}.
\]
Since \(C(\Xbf)\in\mathbb S^p\), the normalizing direction \(\Xbf C(\Xbf)\) is orthogonal to \(\Abf\Xbf\) for every \(\Abf\in\operatorname{Skew}(n)\). Moreover, trace cyclicity and the orthogonality of symmetric and skew-symmetric matrices give
\[
\Df f(\Xbf)[\Abf\Xbf]=\langle\nabla f(\Xbf)\Xbf^\top,\Abf\rangle=\langle\Psi(\Xbf),\Abf\rangle.
\]
Now let \(\Xbf\in\operatorname{St}(n,p)\), so that \(\grad f(\Xbf)=\Psi(\Xbf)\Xbf\). The implication \(\Psi(\Xbf)=0\Rightarrow\grad f(\Xbf)=0\) is immediate. Conversely, if \(\Psi(\Xbf)\Xbf=0\) and \(\Gbf:=\nabla f(\Xbf)\), then \(\Gbf=\Xbf\Gbf^\top\Xbf\). Setting \(\Kbf:=\Gbf^\top\Xbf\) gives \(\Gbf=\Xbf\Kbf\) and \(\Kbf^\top=\Xbf^\top\Gbf=\Kbf\). Consequently, \(\Psi(\Xbf)=\frac12\Xbf(\Kbf-\Kbf^\top)\Xbf^\top=0\). Therefore,
\[
\Psi(\Xbf)=0\iff\grad f(\Xbf)=0,\qquad\|\grad f(\Xbf)\|_F=\|\Psi(\Xbf)\Xbf\|_F\le\|\Psi(\Xbf)\|_F\le\|\Psi(\Xbf)\|_*.
\]

\subsection{Proof of Lemma~\ref{lem:matrix-sign-properties}}
\label{app:shared-msign}

The case \(\Hbf=0\) is immediate. For \(\Hbf\ne0\), let \(\Hbf=\Ubf_r\Sigmabf_r\Vbf_r^\top\) be a compact SVD and set \(\Omegabf:=\Ubf_r\Vbf_r^\top\). Then \(\|\Omegabf\|_{\mathrm{op}}=1\) and \(\langle\Hbf,\Omegabf\rangle=\operatorname{tr}(\Sigmabf_r)=\|\Hbf\|_*\), so nuclear--operator duality establishes the maximization property.

For a positive semidefinite matrix \(\Abf\), write \(\Abf^{\dagger/2}:=(\Abf^\dagger)^{1/2}\), where \(\Abf^\dagger\) denotes its Moore--Penrose pseudoinverse. If \(\Hbf\in\operatorname{Skew}(n)\), then \(\Omegabf=\Hbf(\Hbf^\top\Hbf)^{\dagger/2}\). Since \(\Hbf^\top\Hbf=-\Hbf^2\), the matrix \(\Hbf\) commutes with the symmetric spectral function \((\Hbf^\top\Hbf)^{\dagger/2}\), and hence \(\Omegabf^\top=-\Omegabf\). Thus, \(\Omegabf\) also maximizes the inner product over the skew-symmetric operator-norm unit ball.

\subsection{Proof of Proposition~\ref{prop:shared-basic-estimates}}
\label{app:shared-basic-estimates-proof}

Let \(\Xbf,\Ybf\in\mathcal S_\varepsilon\) and \(\Dbf:=\Ybf-\Xbf\).
For part~\ref{prop:shared-basic-estimates-descent}, convexity of
\(\mathcal U_\varepsilon\) ensures
\(\Xbf+s\Dbf\in\mathcal U_\varepsilon\) for \(s\in[0,1]\).
Hence, the fundamental theorem of calculus,
Assumption~\ref{ass:operator-nuclear-smoothness}, and
nuclear--operator duality give
\[
\begin{aligned}
f(\Ybf)-f(\Xbf)
={}&
\langle\nabla f(\Xbf),\Dbf\rangle
+
\int_0^1
\langle\nabla f(\Xbf+s\Dbf)-\nabla f(\Xbf),\Dbf\rangle\,\textrm{d}s
\\
\le{}&
\langle\nabla f(\Xbf),\Dbf\rangle
+
\frac{L_*}{2}\|\Dbf\|_{\mathrm{op}}^2.
\end{aligned}
\]

For part~\ref{prop:shared-basic-estimates-psi-lipschitz}, linearity and contractivity of
the \(\skewop\) operator, together with
\(\|\Abf \Bbf\|_*\le\|\Abf\|_*\|\Bbf\|_{\mathrm{op}}\), yield
\[
\begin{aligned}
\|\Psi(\Xbf)-\Psi(\Ybf)\|_*
&\le
\|\nabla f(\Xbf)-\nabla f(\Ybf)\|_*\|\Xbf\|_{\mathrm{op}}
+
\|\nabla f(\Ybf)\|_*\|\Xbf-\Ybf\|_{\mathrm{op}}
\\
&\le
\brbra{L_*\sqrt{1+\varepsilon}+G_\varepsilon}
\|\Xbf-\Ybf\|_{\mathrm{op}}.
\end{aligned}
\]

For part~\ref{prop:shared-basic-estimates-sampling}, set
\(\Deltabf_i:=
\nabla f(\Xbf;\xi_i)-\nabla f(\Xbf)\).
The oracle assumption gives
\(\mathbb E[\Gbf^{(B)}]=\nabla f(\Xbf)\) and, since \(\Xbf\) is fixed,
\(\mathbb E[\Psibf^{(B)}]=\Psi(\Xbf)\).
The samples are iid and the errors have zero mean, so the cross terms vanish and
\(\mathbb E\|\Gbf^{(B)}-\nabla f(\Xbf)\|_F^2=B^{-2}\sum_{i=1}^B\mathbb E\|\Deltabf_i\|_F^2\le\sigma^2/B\).
Moreover,
\(\Psibf^{(B)}-\Psi(\Xbf)=
\skewop\!\brbra{(\Gbf^{(B)}-\nabla f(\Xbf))\Xbf^\top}\),
and therefore
\(\mathbb E\|\Psibf^{(B)}-\Psi(\Xbf)\|_F^2\le\|\Xbf\|_{\mathrm{op}}^2\mathbb E\|\Gbf^{(B)}-\nabla f(\Xbf)\|_F^2\le(1+\varepsilon)\sigma^2/B\) and \(\mathbb E\|\Psibf^{(B)}-\Psi(\Xbf)\|_*\le\sigma\sqrt{n(1+\varepsilon)/B}\),
where the last inequality follows from
\(\|\Abf\|_*\le\sqrt n\,\|\Abf\|_F\) and Cauchy--Schwarz.

For part~\ref{prop:shared-basic-estimates-coupling}, let
\(\Xbf\in\mathcal S_\varepsilon\).
Since \(C(\Xbf)\in\mathbb S^p\), only the symmetric part of
\(\Xbf^\top\nabla f(\Xbf)\) contributes to the inner product below.
Nuclear--operator duality and \(d(\Xbf)=\|C(\Xbf)\|_{\mathrm{op}}\)
then give
\[
\left|\langle\nabla f(\Xbf),\Xbf C(\Xbf)\rangle\right|
=
\left|
\left\langle
\symop(\Xbf^\top\nabla f(\Xbf)),C(\Xbf)
\right\rangle
\right|
\le
G^{\mathrm{sym}}_\varepsilon d(\Xbf).
\]

\subsection{Polar alignment}
\label{app:momentum-alignment-proof}

\begin{lemma}[Alignment bound under inexact momentum]
\label{lem:momentum-alignment-bound}
For \(\Psibf,\Hbf\in\operatorname{Skew}(n)\), let
\(\Omegabf:=\cpolar(\Hbf)\). Then
\begin{equation}
\label{eq:momentum-alignment-bound}
\langle\Psibf,\Omegabf\rangle
\ge
\|\Psibf\|_*
-
2\|\Hbf-\Psibf\|_*.
\end{equation}
\end{lemma}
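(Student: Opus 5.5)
The plan is to split \(\Psibf\) as \(\Hbf+(\Psibf-\Hbf)\) and treat the two pieces separately. The first piece uses the exact alignment of the polar factor with \(\Hbf\). The second piece is an error term controlled by nuclear--operator duality.

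By bilinearity,
\[
\langle\Psibf,\Omegabf\rangle=\langle\Hbf,\Omegabf\rangle+\langle\Psibf-\Hbf,\Omegabf\rangle .
\]
Lemma~\ref{lem:matrix-sign-properties} gives \(\langle\Hbf,\Omegabf\rangle=\|\Hbf\|_*\) and \(\|\Omegabf\|_{\mathrm{op}}\le1\). Hölder's inequality for the nuclear/operator pair then yields
\[
|\langle\Psibf-\Hbf,\Omegabf\rangle|\le\|\Psibf-\Hbf\|_*\|\Omegabf\|_{\mathrm{op}}\le\|\Hbf-\Psibf\|_*.
\]

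Next, the reverse triangle inequality for the nuclear norm gives \(\|\Hbf\|_*\ge\|\Psibf\|_*-\|\Hbf-\Psibf\|_*\). Combining the pieces,
\[
\langle\Psibf,\Omegabf\rangle\ge\|\Psibf\|_*-\|\Hbf-\Psibf\|_*-\|\Hbf-\Psibf\|_*=\|\Psibf\|_*-2\|\Hbf-\Psibf\|_*,
\]
which is exactly \eqref{eq:momentum-alignment-bound}.

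Skew-symmetry of \(\Psibf\) and \(\Hbf\) is not needed for the inequality itself; it only guarantees that \(\Omegabf\) is skew, as used elsewhere. The degenerate case \(\Hbf=0\) is covered automatically: then \(\Omegabf=0\), the left side is \(0\), and the right side is \(-\|\Psibf\|_*\le0\). There is no real obstacle here. The only point to check is that the alignment identity \(\langle\Hbf,\cpolar(\Hbf)\rangle=\|\Hbf\|_*\) holds for the compact polar factor, and this is already recorded in Lemma~\ref{lem:matrix-sign-properties}.
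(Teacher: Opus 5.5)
Your proposal is correct and follows the paper's own proof: split \(\langle\Psibf,\Omegabf\rangle=\|\Hbf\|_*+\langle\Psibf-\Hbf,\Omegabf\rangle\), bound the error term by nuclear--operator duality using \(\|\Omegabf\|_{\mathrm{op}}\le1\), and finish with the reverse triangle inequality. Your handling of \(\Hbf=0\) also matches the paper's remark that the same argument covers that case.
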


Let \(\Omegabf:=\cpolar(\Hbf)\).
Lemma~\ref{lem:matrix-sign-properties} and
nuclear--operator duality imply
\(\langle\Psibf,\Omegabf\rangle=\|\Hbf\|_*+\langle\Psibf-\Hbf,\Omegabf\rangle\ge\|\Hbf\|_*-\|\Psibf-\Hbf\|_*\ge\|\Psibf\|_*-2\|\Hbf-\Psibf\|_*\),
where the last inequality is the reverse triangle inequality.
The same argument covers \(\Hbf=0\).

\subsection{Momentum tracking}
\label{app:generic-momentum-tracking}
\label{subsec:generic-momentum-tracking}

Fix \(\beta\in[0,1)\), let \(\Hbf_0:=\Psibf_0^{(B)}\), and define \(\Hbf_s:=\beta\Hbf_{s-1}+(1-\beta)\Psibf_s^{(B)}\) for \(s\ge1\).

For the stochastic analysis, let
\(\mathcal F_t:=
\sigma(\Xbf_0,\{\xi_{s,i}:0\le s<t,\ 1\le i\le B\})\) and
\(\mathbb E_t[\cdot]:=\mathbb E[\cdot\mid\mathcal F_t]\).
Then \(\Xbf_t\) is \(\mathcal F_t\)-measurable,
as is \(\Hbf_{t-1}\) for \(t\ge1\).
By fresh iid sampling at each iteration, Proposition~\ref{prop:shared-basic-estimates}\ref{prop:shared-basic-estimates-sampling} applies conditionally on \(\mathcal F_t\) at \(\Xbf=\Xbf_t\).

\begin{proposition}
\label{prop:generic-momentum-tracking}
Suppose Assumptions~\ref{ass:operator-nuclear-smoothness}
and~\ref{ass:stochastic-oracle} hold, and
\(\Xbf_0,\ldots,\Xbf_t\in\mathcal S_\varepsilon\) almost surely.
Let \(\nu_{\varepsilon,B}:=
\sigma\sqrt{n(1+\varepsilon)/B}\).
Then
\begin{equation}
\label{eq:generic-momentum-tracking}
\mathbb E\|\Hbf_t-\Psibf_t\|_*
\le
\nu_{\varepsilon,B}
\Bsbra{
\beta^t+
\sqrt{
\frac{1-\beta}{1+\beta}
\brbra{1-\beta^{2t}}
}
}
+
L_{\Psi,\varepsilon}
\sum_{j=0}^{t-1}
\beta^{t-j}
\mathbb E\|\Xbf_{j+1}-\Xbf_j\|_{\mathrm{op}}.
\end{equation}
\end{proposition}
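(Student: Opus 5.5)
The plan is to unroll the EMA recursion and split the tracking error into a stochastic part and a drift part. Write \(\Ebf_s:=\Psibf_s^{(B)}-\Psibf_s\) for the sampling error at iteration \(s\). Unrolling \(\Hbf_s=\beta\Hbf_{s-1}+(1-\beta)\Psibf_s^{(B)}\) with \(\Hbf_0=\Psibf_0^{(B)}\) gives
\[
\Hbf_t=\beta^t\Psibf_0^{(B)}+(1-\beta)\sum_{s=1}^{t}\beta^{t-s}\Psibf_s^{(B)}.
\]
The weights \(\beta^t\) and \((1-\beta)\beta^{t-s}\), \(s=1,\ldots,t\), sum to one. Hence
\[
\Hbf_t-\Psibf_t=\Big[\beta^t\Ebf_0+(1-\beta)\sum_{s=1}^t\beta^{t-s}\Ebf_s\Big]+\Big[\beta^t(\Psibf_0-\Psibf_t)+(1-\beta)\sum_{s=1}^t\beta^{t-s}(\Psibf_s-\Psibf_t)\Big].
\]
I will bound the two brackets separately in \(\mathbb E\|\cdot\|_*\).

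For the drift bracket, I telescope \(\Psibf_s-\Psibf_t=-\sum_{j=s}^{t-1}(\Psibf_{j+1}-\Psibf_j)\) and exchange the order of summation. The coefficient of \(\Psibf_{j+1}-\Psibf_j\) is then
\[
\beta^t+(1-\beta)\sum_{s=1}^{j}\beta^{t-s}=\beta^{t-j}.
\]
Equivalently, the drift bracket equals \(-\sum_{j=0}^{t-1}\beta^{t-j}(\Psibf_{j+1}-\Psibf_j)\). The triangle inequality together with Proposition~\ref{prop:shared-basic-estimates}\ref{prop:shared-basic-estimates-psi-lipschitz} applies, since all iterates lie in \(\mathcal S_\varepsilon\) almost surely. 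After taking expectations this yields exactly the term \(L_{\Psi,\varepsilon}\sum_j\beta^{t-j}\mathbb E\|\Xbf_{j+1}-\Xbf_j\|_{\mathrm{op}}\).

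For the noise bracket, I separate \(\beta^t\Ebf_0\) from the rest. Proposition~\ref{prop:shared-basic-estimates}\ref{prop:shared-basic-estimates-sampling} gives \(\mathbb E\|\Ebf_0\|_*\le\nu_{\varepsilon,B}\), which produces the \(\nu_{\varepsilon,B}\beta^t\) term. For \(\Sbf:=(1-\beta)\sum_{s=1}^t\beta^{t-s}\Ebf_s\) I use \(\|\Sbf\|_*\le\sqrt n\|\Sbf\|_F\) and Jensen to get \(\mathbb E\|\Sbf\|_*\le\sqrt n(\mathbb E\|\Sbf\|_F^2)^{1/2}\).

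The key step, and the main subtlety, is that the \(\Ebf_s\) form a martingale difference sequence. Specifically, \(\Ebf_s\) is \(\mathcal F_{s+1}\)-measurable and \(\mathbb E_s[\Ebf_s]=0\), because \(\Xbf_s\) is \(\mathcal F_s\)-measurable and the samples are fresh; this follows from the conditional version of part~\ref{prop:shared-basic-estimates-sampling}. Conditioning on \(\mathcal F_{\max(r,s)}\) kills the cross terms \(\mathbb E\langle\Ebf_r,\Ebf_s\rangle\) for \(r\ne s\). One must take care that the weights are deterministic, which they are, even though \(\Xbf_s\) is random. Consequently
\[
\mathbb E\|\Sbf\|_F^2=(1-\beta)^2\sum_{s=1}^t\beta^{2(t-s)}\,\mathbb E\|\Ebf_s\|_F^2\le(1-\beta)^2\frac{1-\beta^{2t}}{1-\beta^2}(1+\varepsilon)\frac{\sigma^2}{B},
\]
and \((1-\beta)^2/(1-\beta^2)=(1-\beta)/(1+\beta)\). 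Multiplying by \(\sqrt n\) and taking the square root gives \(\nu_{\varepsilon,B}\sqrt{\tfrac{1-\beta}{1+\beta}(1-\beta^{2t})}\). Adding the three contributions yields \eqref{eq:generic-momentum-tracking}. The case \(t=0\) is trivial, since then the sums are empty and the bound reduces to \(\mathbb E\|\Ebf_0\|_*\le\nu_{\varepsilon,B}\).
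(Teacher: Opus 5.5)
Your proof is correct and follows essentially the same route as the paper. Your noise bracket is the paper's \(\Hbf_t-\bar\Hbf_t\), the deviation from the auxiliary full-gradient momentum, and it is bounded the same way: conditional orthogonality of the mini-batch errors, \(\|\cdot\|_*\le\sqrt n\|\cdot\|_F\), and Jensen/Cauchy--Schwarz. Your drift bracket is \(\bar\Hbf_t-\Psibf_t=-\sum_{j=0}^{t-1}\beta^{t-j}(\Psibf_{j+1}-\Psibf_j)\), bounded by the Lipschitz estimate in Proposition~\ref{prop:shared-basic-estimates}; the only cosmetic difference is that you derive the \(\beta^{t-j}\) coefficients by exchanging sums, whereas the paper states this identity directly.
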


The bound separates a transient contribution from the noisy
initialization, the stationary fluctuation of the EMA estimator,
and a trajectory-variation term caused by tracking the moving
target \(\Psibf_t\).
The proof below combines conditional orthogonality of the mini-batch noise with the Lipschitz continuity of \(\Psibf\).

Define the auxiliary full-gradient momentum by
\(\bar \Hbf_0:=\Psibf_0\) and
\(\bar \Hbf_t:=\beta\bar \Hbf_{t-1}+(1-\beta)\Psibf_t\) for
\(t\ge1\).
The decomposition
\(\Hbf_t-\Psibf_t=(\Hbf_t-\bar \Hbf_t)+(\bar \Hbf_t-\Psibf_t)\)
separates the sampling error from the moving-target lag.

Set \(\zeta_t:=\Psibf_t^{(B)}-\Psibf_t\).
By Proposition~\ref{prop:shared-basic-estimates},
\[
\mathbb E_t[\zeta_t]=0,
\qquad
\mathbb E_t\|\zeta_t\|_F^2
\le
\frac{(1+\varepsilon)\sigma^2}{B},
\qquad
\mathbb E\|\zeta_t\|_*
\le
\nu_{\varepsilon,B}.
\]
Expanding the stochastic and auxiliary momentum recursions gives \(\Hbf_t-\bar\Hbf_t=\beta^t\zeta_0+(1-\beta)\sum_{j=1}^t\beta^{t-j}\zeta_j\).
For \(i<j\), \(\zeta_i\) is \(\mathcal F_j\)-measurable and
\(\mathbb E_j[\zeta_j]=0\); hence
\(\mathbb E\langle\zeta_i,\zeta_j\rangle=0\).
Conditional orthogonality,
\(\|\Abf\|_*\le\sqrt n\|\Abf\|_F\), and Cauchy--Schwarz yield
\[
\begin{aligned}
\mathbb E\|\Hbf_t-\bar \Hbf_t\|_*
&\le
\beta^t\mathbb E\|\zeta_0\|_*
+
(1-\beta)
\sqrt{
n\sum_{j=1}^t
\beta^{2(t-j)}
\mathbb E\|\zeta_j\|_F^2
}
\\
&\le
\nu_{\varepsilon,B}
\Bsbra{
\beta^t+
\sqrt{
\frac{1-\beta}{1+\beta}
\brbra{1-\beta^{2t}}
}
}.
\end{aligned}
\]
For the moving-target component, \(\bar\Hbf_t-\Psibf_t=-\sum_{j=0}^{t-1}\beta^{t-j}(\Psibf_{j+1}-\Psibf_j)\).
The Lipschitz estimate in
Proposition~\ref{prop:shared-basic-estimates} gives
\[
\mathbb E\|\bar \Hbf_t-\Psibf_t\|_*
\le
L_{\Psi,\varepsilon}
\sum_{j=0}^{t-1}
\beta^{t-j}
\mathbb E\|\Xbf_{j+1}-\Xbf_j\|_{\mathrm{op}}.
\]
Combining the two bounds with
\(\Hbf_t-\Psibf_t=(\Hbf_t-\bar \Hbf_t)+(\bar \Hbf_t-\Psibf_t)\)
proves the result.
For \(t=0\), the convolution is empty and the conclusion follows
from \(\Hbf_0-\Psibf_0=\zeta_0\).

\section{Proofs for \algname{}}
\label{app:coupled-proofs}

Following Section~\ref{sec:coupled-convergence}, we prove the safeguards, residual recursion, objective descent, and weighted momentum tracking, then derive the finite-time guarantee and horizon-dependent rates.

\subsection{Proof of Lemma~\ref{lem:coupled-safeguard}}
\label{app:coupled-safeguard-proof}
\label{app:coupled-uniform-safeguard-proof}

\begin{proof}
Write \(\Cbf:=\Xbf^\top\Xbf-\Ibf_p\), \(\Dbf:=\Omegabf\Xbf+\lambda\Xbf\Cbf\), \(d:=\|\Cbf\|_{\mathrm{op}}\), and \(m:=\|\Dbf\|_{\mathrm{op}}\). For \(\Xbf^+:=\Xbf-\tau\Dbf\), the skew-symmetry of \(\Omegabf\) gives
\[
\Cbf^+=\Cbf-2\lambda\tau(\Cbf+\Cbf^2)+\tau^2\Dbf^\top\Dbf.
\]
For \(q:=2\lambda\tau\in[0,1]\), the triangle inequality and \(\|\Cbf^2\|_{\mathrm{op}}\le d^2\) yield
\[
\|\Cbf^+\|_{\mathrm{op}}\le(1-q)d+qd^2+m^2\tau^2=d-2\lambda\tau d(1-d)+m^2\tau^2.
\]
For \(m>0\), requiring the right-hand side to be at most \(\varepsilon\) gives \(m^2\tau^2-2\lambda d(1-d)\tau+d-\varepsilon\le0\). Its nonnegative root, together with \(\tau\le1/(2\lambda)\), is the safeguard threshold in \eqref{eq:coupled-safeguard}. If \(m=0\), then \(\Dbf=0\) and \(\Xbf^+=\Xbf\).

For \(\Xbf_t\in\mathcal S_\varepsilon\), Lemmas~\ref{lem:safe-region-bounds} and~\ref{lem:matrix-sign-properties} give \(m_t\le\sqrt{1+\varepsilon}(1+\lambda\varepsilon)=M_\varepsilon\). For \(m_t>0\), let \(r_\varepsilon(d,m;\lambda)\) denote the nonnegative quadratic root in \eqref{eq:coupled-safeguard}. If \(0\le d\le\varepsilon/2\), then \(r_\varepsilon(d,m;\lambda)\ge\sqrt{\varepsilon/2}/M_\varepsilon\); if \(\varepsilon/2\le d\le\varepsilon\), then \(r_\varepsilon(d,m;\lambda)\ge\lambda\varepsilon(1-\varepsilon)/M_\varepsilon^2\). For \(m_t=0\), the safeguard equals \(1/(2\lambda)\). Hence \eqref{eq:coupled-uniform-safe-threshold} implies \(\eta_t^{\mathrm{safe}}\ge\eta_\varepsilon^\star\). The local safeguard and induction from \(\Xbf_0\in\mathcal S_\varepsilon\) establish pathwise invariance.
\end{proof}

\subsection{Proof of Lemma~\ref{lem:coupled-residual-recursion}}
\label{app:coupled-residual-proof}

By Lemma~\ref{lem:coupled-safeguard}, \(d_t\le\varepsilon\) and \(m_t\le M_\varepsilon\) pathwise. The residual estimate established in its proof yields
\[
d_{t+1}\le d_t-2\lambda\tau_td_t(1-d_t)+m_t^2\tau_t^2\le(1-a_\varepsilon\tau_t)d_t+b_\varepsilon\tau_t^2,
\]
where \(a_\varepsilon=2\lambda(1-\varepsilon)\) and \(b_\varepsilon=M_\varepsilon^2\), proving \eqref{eq:coupled-residual-recursion}. Under \eqref{eq:coupled-target-step-condition}, \(\tau_t\le1/(2\lambda)\) ensures \(1-a_\varepsilon\tau_t\ge\varepsilon>0\). Rearranging the recursion and summing over \(t=0,\ldots,T-1\) gives
\[
a_\varepsilon\sum_{t=0}^{T-1}\tau_td_t\le d_0-d_T+b_\varepsilon\sum_{t=0}^{T-1}\tau_t^2\le d_0+b_\varepsilon\sum_{t=0}^{T-1}\tau_t^2,
\]
which proves \eqref{eq:coupled-feasibility-telescoping}.

\subsection{\texorpdfstring{Objective Descent for \algname{}}{Objective Descent for Landom}}
\label{app:coupled-objective-descent-proof}

\label{subsec:coupled-descent-tracking}

Combining Lemma~\ref{lem:momentum-alignment-bound} with the estimates
in Section~\ref{sec:preliminaries} and
Lemma~\ref{lem:coupled-safeguard} yields the following pathwise
approximate descent property.
\begin{proposition}
\label{prop:coupled-objective-descent}
Suppose that Assumption~\ref{ass:operator-nuclear-smoothness} holds,
\(\Xbf_0\in\mathcal S_\varepsilon\), and the deterministic target
step sizes satisfy \eqref{eq:coupled-target-step-condition}.
Then, for every \(t=0,\ldots,T-1\),
\begin{equation}
\label{eq:coupled-objective-descent}
f(\Xbf_{t+1})
\le
f(\Xbf_t)
-
\tau_t\|\Psibf_t\|_*
+
2\tau_t\|\Hbf_t-\Psibf_t\|_*
+
\lambda G^{\mathrm{sym}}_\varepsilon\tau_td_t
+
\frac{L_*M_\varepsilon^2}{2}\tau_t^2.
\end{equation}
\end{proposition}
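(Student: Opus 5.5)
The bound follows from one application of the descent inequality in Proposition~\ref{prop:shared-basic-estimates}\ref{prop:shared-basic-estimates-descent} with \(\Xbf=\Xbf_t\) and \(\Ybf=\Xbf_{t+1}=\Xbf_t-\tau_t\Dbf_t\). The inequality needs both points in \(\mathcal S_\varepsilon\). This is supplied by Lemma~\ref{lem:coupled-safeguard}: starting from \(\Xbf_0\in\mathcal S_\varepsilon\), all iterates stay in \(\mathcal S_\varepsilon\) pathwise. By \eqref{eq:coupled-target-step-condition} the safeguard is inactive, so \(\tau_t=\eta_t^{\mathrm{tar}}\) is deterministic. The descent inequality then gives
\[
f(\Xbf_{t+1})\le f(\Xbf_t)-\tau_t\langle\nabla f(\Xbf_t),\Omegabf_t\Xbf_t\rangle-\lambda\tau_t\langle\nabla f(\Xbf_t),\Xbf_t\Cbf_t\rangle+\frac{L_*}{2}\tau_t^2\|\Dbf_t\|_{\mathrm{op}}^2 .
\]

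I would treat the three remaining terms separately.

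\emph{Tangential term.} By the identity \(\Df f(\Xbf)[\Abf\Xbf]=\langle\Psi(\Xbf),\Abf\rangle\) in Appendix~\ref{app:shared-level-set-identities}, which uses \(\Omegabf_t\in\operatorname{Skew}(n)\) from Lemma~\ref{lem:matrix-sign-properties}, we have \(\langle\nabla f(\Xbf_t),\Omegabf_t\Xbf_t\rangle=\langle\Psibf_t,\Omegabf_t\rangle\). Lemma~\ref{lem:momentum-alignment-bound}, applied with \(\Psibf=\Psibf_t\) and \(\Hbf=\Hbf_t\) (both skew), bounds this from below by \(\|\Psibf_t\|_*-2\|\Hbf_t-\Psibf_t\|_*\). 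This produces the terms \(-\tau_t\|\Psibf_t\|_*+2\tau_t\|\Hbf_t-\Psibf_t\|_*\).

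\emph{Normal term.} Proposition~\ref{prop:shared-basic-estimates}\ref{prop:shared-basic-estimates-coupling} gives \(|\langle\nabla f(\Xbf_t),\Xbf_t\Cbf_t\rangle|\le G^{\mathrm{sym}}_\varepsilon d_t\). Hence this term contributes at most \(\lambda G^{\mathrm{sym}}_\varepsilon\tau_t d_t\).

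\emph{Quadratic term.} Lemma~\ref{lem:coupled-safeguard} gives \(m_t=\|\Dbf_t\|_{\mathrm{op}}\le M_\varepsilon\), so this term is at most \(\frac{L_*M_\varepsilon^2}{2}\tau_t^2\).

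Adding the three bounds gives \eqref{eq:coupled-objective-descent}. The argument is pathwise and does not depend on how \(\Hbf_t\) was formed; it uses only that \(\Hbf_t\) is skew.

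There is no serious obstacle. The one point to check is the tangential inner product: the descent inequality involves \(\nabla f\), not \(\Psi\), and converting \(\langle\nabla f,\Omegabf\Xbf\rangle\) to \(\langle\Psi,\Omegabf\rangle\) requires \(\Omegabf_t\) to be skew-symmetric, so that the symmetric part of \(\nabla f(\Xbf_t)\Xbf_t^\top\) drops out.
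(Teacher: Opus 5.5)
Your proposal is correct and follows the paper's proof essentially step for step. You apply the operator--nuclear descent inequality to $\Xbf_t,\Xbf_{t+1}\in\mathcal S_\varepsilon$, use the relative-gradient identity together with the polar-alignment bound for the tangential part, the symmetric coupling estimate for the normal part, and $\|\Dbf_t\|_{\mathrm{op}}\le M_\varepsilon$ for the quadratic term.
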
% <261013 B3-proof: retained original>

By Lemma~\ref{lem:coupled-safeguard}, \(\Xbf_t,\Xbf_{t+1}\in\mathcal S_\varepsilon\). The operator--nuclear smoothness estimate in Appendix~\ref{app:shared-basic-estimates-proof} gives
\[
f(\Xbf_{t+1})\le f(\Xbf_t)-\tau_t\langle\nabla f(\Xbf_t),\Dbf_t\rangle+(L_*/2)\tau_t^2\|\Dbf_t\|_{\mathrm{op}}^2.
\]
By the relative-gradient identity, Lemma~\ref{lem:momentum-alignment-bound}, and the normal-coupling estimate in Appendix~\ref{app:shared-basic-estimates-proof},
\[
\langle\nabla f(\Xbf_t),\Dbf_t\rangle\ge\|\Psibf_t\|_*-2\|\Hbf_t-\Psibf_t\|_*-\lambda G_\varepsilon^{\mathrm{sym}}d_t.
\]
Substitution with \(\|\Dbf_t\|_{\mathrm{op}}\le M_\varepsilon\) proves \eqref{eq:coupled-objective-descent}.

\subsection{Weighted Tracking Accumulation}
\label{app:coupled-tracking-accumulation}

By Algorithm~\ref{alg:landom} and Lemma~\ref{lem:coupled-safeguard}, \(\|\Xbf_{j+1}-\Xbf_j\|_{\mathrm{op}}=\tau_j\|\Dbf_j\|_{\mathrm{op}}\le M_\varepsilon\tau_j\). Applying Proposition~\ref{prop:generic-momentum-tracking} gives
\begin{equation}
\label{eq:coupled-tracking-bound}
\mathbb E\|\Hbf_t-\Psibf_t\|_*\le\nu_{\varepsilon,B}\left[\beta^t+\sqrt{\frac{1-\beta}{1+\beta}(1-\beta^{2t})}\right]+L_{\Psi,\varepsilon}M_\varepsilon\sum_{j=0}^{t-1}\beta^{t-j}\tau_j.
\end{equation}
For constant \(\tau_j\equiv\tau\), the convolution sum is bounded by \(\beta\tau/(1-\beta)\). To obtain the weighted estimate, note that \(2\tau_t\tau_j\le\tau_t^2+\tau_j^2\). Summing the resulting geometric series gives
\[
\sum_{0\le j<t<T}\beta^{t-j}\tau_t\tau_j\le\frac{\beta}{1-\beta}Q_T.
\]
Multiplying \eqref{eq:coupled-tracking-bound} by \(\tau_t\), summing over \(t=0,\ldots,T-1\), and using \(\sqrt{1-\beta^{2t}}\le1\), we obtain
\[
\sum_{t=0}^{T-1}\tau_t\mathbb E\|\Hbf_t-\Psibf_t\|_*\le\nu_{\varepsilon,B}\left[\sum_{t=0}^{T-1}\tau_t\beta^t+\sqrt{\frac{1-\beta}{1+\beta}}W_T\right]+L_{\Psi,\varepsilon}M_\varepsilon\frac{\beta}{1-\beta}Q_T.
\]
Since \(W_T>0\), division by \(W_T\) gives \(\mathcal R_T^{\mathrm{track}}\) in \eqref{eq:coupled-rate-quantities}.

\subsection{Proof of Theorem~\ref{thm:coupled-finite-time}}
\label{app:coupled-finite-time-proof}

\begin{proof}
The pathwise feasibility estimate \eqref{eq:coupled-feasibility-telescoping} gives \(\sum_{t=0}^{T-1}\tau_td_t/W_T\le(d_0+b_\varepsilon Q_T)/(a_\varepsilon W_T)=\mathcal R_T^{\mathrm{feas}}\). Since the steps are deterministic, the corresponding bound also holds after taking expectations.

Summing \eqref{eq:coupled-objective-descent}, using \(f(\Xbf_T)\ge f_\varepsilon^{\inf}\), and taking expectations yield
\[
\frac{\sum_{t=0}^{T-1}\tau_t\mathbb E\|\Psibf_t\|_*}{W_T}\le\frac{\Delta_f}{W_T}+\frac2{W_T}\sum_{t=0}^{T-1}\tau_t\mathbb E\|\Hbf_t-\Psibf_t\|_*+\frac{\lambda G_\varepsilon^{\mathrm{sym}}}{W_T}\sum_{t=0}^{T-1}\tau_t\mathbb E d_t+\frac{L_*M_\varepsilon^2}{2}\frac{Q_T}{W_T}.
\]
Applying the weighted tracking bound in Appendix~\ref{app:coupled-tracking-accumulation} and the preceding feasibility estimate gives \(\mathcal R_T^{\mathrm{stat}}\) in \eqref{eq:coupled-rate-quantities}, proving \eqref{eq:coupled-finite-time-feasibility}.
The random-output identities follow by conditioning on the iterates and using the independence of \(R\).
\end{proof}

\begin{remark}[Safeguard inactivity]
\label{rem:coupled-safeguard-inactivity}
For a horizon-dependent target \(\tau_T=\eta_0T^{-\alpha}\), \(\alpha\in\{1/2,3/4\}\), Lemma~\ref{lem:coupled-safeguard} guarantees \(\tau_t=\tau_T\) pathwise throughout the run whenever \(T\ge\max\{1,(\eta_0/\eta_\varepsilon^\star)^{1/\alpha}\}\); hence the safeguard is inactive for all sufficiently large horizons.
\end{remark}

\subsection{Proof of Corollary~\ref{cor:coupled-stochastic-momentum}}
\label{app:coupled-stochastic-rate-proof}

Since \(d_0=0\) and \(\tau_T=\eta_0T^{-3/4}\), Remark~\ref{rem:coupled-safeguard-inactivity} gives \(\tau_t=\tau_T\) for all sufficiently large \(T\). Hence \(W_T=T\tau_T\), \(Q_T=T\tau_T^2\), and \(\mathcal R_T^{\mathrm{feas}}=\Gamma_\varepsilon\tau_T\). Theorem~\ref{thm:coupled-finite-time} therefore yields \(T^{-1}\sum_{t=0}^{T-1}d_t\le\Gamma_\varepsilon\eta_0T^{-3/4}\) pathwise.

For stationarity, substituting \(\tau_T=\eta_0T^{-3/4}\) and \(\beta_T=1-\beta_0T^{-1/2}\) into \eqref{eq:coupled-rate-quantities}, with \(\sum_{t=0}^{T-1}\beta_T^t\le(1-\beta_T)^{-1}\), gives
\[
\begin{aligned}
\frac1T\sum_{t=0}^{T-1}\mathbb E\|\Psibf_t\|_*\le{}&\left(\frac{\Delta_f}{\eta_0}+2\nu_{\varepsilon,B}\sqrt{\beta_0}+\frac{2L_{\Psi,\varepsilon}M_\varepsilon\eta_0}{\beta_0}\right)T^{-1/4}\\
&+\frac{2\nu_{\varepsilon,B}}{\beta_0}T^{-1/2}+\eta_0\left(\lambda G_\varepsilon^{\mathrm{sym}}\Gamma_\varepsilon+\frac{L_*M_\varepsilon^2}{2}\right)T^{-3/4}.
\end{aligned}
\]
Since \(B\) is fixed independently of \(T\), the coefficients do not depend on \(T\). This proves the stated \(\mathcal O(T^{-1/4})\) stationarity rate.

\subsection{Proof of Corollary~\ref{cor:coupled-deterministic-no-momentum}}
\label{app:coupled-deterministic-rate-proof}

\begin{proof}
For \(\tau_T=\eta_0T^{-1/2}\), Remark~\ref{rem:coupled-safeguard-inactivity} gives \(\tau_t=\tau_T\) throughout the run for all sufficiently large \(T\). Since exact gradients are used, \(\Hbf_0=\Psibf_0\). Setting \(\Zbf_t:=\Hbf_t-\Psibf_t\), the momentum recursion gives \(\Zbf_0=0\) and \(\Zbf_t=-\sum_{j=0}^{t-1}\beta^{t-j}(\Psibf_{j+1}-\Psibf_j)\) for \(t\ge1\). By the relative-gradient Lipschitz estimate proved in Appendix~\ref{app:shared-basic-estimates-proof} and \(\|\Xbf_{j+1}-\Xbf_j\|_{\mathrm{op}}\le M_\varepsilon\tau_T\),
\begin{equation}
\label{eq:coupled-deterministic-tracking}
\|\Zbf_t\|_*\le L_{\Psi,\varepsilon}M_\varepsilon\tau_T\sum_{k=1}^{t}\beta^k\le L_{\Psi,\varepsilon}M_\varepsilon\frac{\beta}{1-\beta}\tau_T.
\end{equation}
This bound also holds at \(t=0\). Thus, exact gradients eliminate sampling noise but not the momentum lag for \(\beta>0\). Lemma~\ref{lem:coupled-residual-recursion} gives \(T^{-1}\sum_{t=0}^{T-1}d_t\le d_0/(a_\varepsilon T\tau_T)+\Gamma_\varepsilon\tau_T\). Summing \eqref{eq:coupled-objective-descent}, using \(f(\Xbf_T)\ge f_\varepsilon^{\inf}\), and applying the tracking and feasibility estimates yield
\[
\frac1T\sum_{t=0}^{T-1}\|\Psibf_t\|_*\le\frac{\Delta_f+\lambda G_\varepsilon^{\mathrm{sym}}d_0/a_\varepsilon}{T\tau_T}+\left(2L_{\Psi,\varepsilon}M_\varepsilon\frac{\beta}{1-\beta}+\lambda G_\varepsilon^{\mathrm{sym}}\Gamma_\varepsilon+\frac{L_*M_\varepsilon^2}{2}\right)\tau_T.
\]
Substituting \(\tau_T=\eta_0T^{-1/2}\) establishes the existing explicit stationarity and feasibility bounds in Corollary~\ref{cor:coupled-deterministic-no-momentum}. Fixed \(\beta<1\) yields the stated \(\mathcal O(T^{-1/2})\) rates; for \(\beta=0\), the momentum lag vanishes.
\end{proof}

\section{Proofs for \algplus{}}
\label{app:separate-steps-proofs}

We prove the safeguards, residual contraction, descent, tracking, and convergence rates for \algplus{}; shared estimates and tail guarantees appear in Appendices~\ref{app:shared-proofs} and~\ref{app:tail-output}.

\subsection{Proof of Lemma~\ref{lem:separate-steps-safeguard}}
\label{app:separate-steps-safeguard-proof}

\begin{proof}
Let \(\Xbf\in\mathcal S_\varepsilon\), \(\Omegabf\in\operatorname{Skew}(n)\) with \(\|\Omegabf\|_{\mathrm{op}}\le1\), and write \(\Cbf:=\Xbf^\top\Xbf-\Ibf_p\), \(d:=\|\Cbf\|_{\mathrm{op}}\), and \(\Ebf:=\gamma\Omegabf\Xbf+\rho\Xbf\Cbf\). Expanding the right-Gram matrix under the update \eqref{eq:separate-steps-displacement} and using \(\|\Ebf\|_{\mathrm{op}}\le\sqrt{1+d}(\gamma+\rho d)\),
\[
d^+\le d-2\rho d(1-d)+(1+d)(\gamma+\rho d)^2.
\]
Requiring the right-hand side to be at most \(\varepsilon\) and solving for \(\gamma\ge0\) gives the safeguard map \eqref{eq:separate-steps-safe-map}.

For the uniform lower bound, define \(\chi(d):=[\varepsilon-d+2\rho d(1-d)]/(1+d)\). Differentiation gives \(\chi'(d)=-(2\rho d^2+4\rho d+1+\varepsilon-2\rho)/(1+d)^2<0\) since \(2\rho<1\). Hence \(h(d):=\sqrt{\chi(d)}-\rho d\) is decreasing on \([0,\varepsilon]\), and
\[
\gamma_{\varepsilon,\rho}^{\mathrm{safe}}(d)\ge h(\varepsilon)=\sqrt{2\rho\varepsilon(1-\varepsilon)/(1+\varepsilon)}-\rho\varepsilon=\gamma_\varepsilon^{\mathrm{unif}}(\rho)>0.
\]
The strict positivity follows from \(\rho\varepsilon(1+\varepsilon)<2(1-\varepsilon)\). For Algorithm~\ref{alg:landom-plus}, \(\gamma_t\le\gamma_{\varepsilon,\rho}^{\mathrm{safe}}(d_t)\) and induction from \(\Xbf_0\in\mathcal S_\varepsilon\) establishes pathwise invariance. If the targets satisfy \(0<\gamma_t^{\mathrm{tar}}\le\gamma_{\varepsilon,\rho}^{\mathrm{adm}}\), then \(\gamma_t^{\mathrm{tar}}\le\gamma_\varepsilon^{\mathrm{unif}}(\rho)\le\gamma_{\varepsilon,\rho}^{\mathrm{safe}}(d_t)\), so \(\gamma_t=\gamma_t^{\mathrm{tar}}\) pathwise. The local argument uses only skew-symmetry and the operator-norm bound, which hold for the compact polar direction by Lemma~\ref{lem:matrix-sign-properties}.
\end{proof}

\subsection{Proof of Lemma~\ref{lem:separate-steps-residual-recursion}}
\label{app:separate-steps-residual-proof}

By the scalar estimate in Appendix~\ref{app:separate-steps-safeguard-proof}, \(d_t\le\varepsilon\), and \(\kappa_\varepsilon=1+\varepsilon\), we obtain
\[
d_{t+1}\le[1-\rho(\Delta_{\varepsilon,\rho}-2\kappa_\varepsilon\gamma_t)]d_t+\kappa_\varepsilon\gamma_t^2.
\]
Since \(\gamma_t\le\gamma_{\varepsilon,\rho}^{\mathrm{ctr}}=\Delta_{\varepsilon,\rho}/(4\kappa_\varepsilon)\), the coefficient of \(d_t\) is at most \(q_{\varepsilon,\rho}=1-\rho\Delta_{\varepsilon,\rho}/2\), proving \eqref{eq:separate-steps-residual-recursion}. Rearranging and summing gives
\begin{equation}
\label{eq:separate-steps-feasibility-telescoping}
a_{\varepsilon,\rho}\sum_{t=0}^{T-1}d_t\le d_0+\kappa_\varepsilon\sum_{t=0}^{T-1}\gamma_t^2.
\end{equation}
Iterating the recursion gives \(d_t\le q_{\varepsilon,\rho}^{\,t}d_0+\kappa_\varepsilon\sum_{j=0}^{t-1}q_{\varepsilon,\rho}^{\,t-1-j}\gamma_j^2\). If \(\gamma_j\le\bar\gamma\), the geometric-series bound and \(1-q_{\varepsilon,\rho}=a_{\varepsilon,\rho}\) yield the residual-floor estimate \eqref{eq:separate-steps-residual-floor}.

\subsection{\texorpdfstring{Objective Descent for \algplus{}}{Objective Descent for Landom+}}
\label{app:separate-steps-objective-descent-proof}

\label{subsec:separate-steps-descent-tracking}

The fixed normal step adds a displacement relative to \algname{}. From \eqref{eq:separate-steps-displacement}, the bound is
\begin{equation}
\|\Xbf_{t+1}-\Xbf_t\|_{\mathrm{op}}
\le
\sqrt{\kappa_\varepsilon}
\brbra{\gamma_t+\rho d_t}.
\label{eq:separate-steps-displacement-bound}
\end{equation}

\begin{proposition}
\label{prop:separate-steps-objective-descent}
Suppose Assumption~\ref{ass:operator-nuclear-smoothness} holds, \(\Xbf_0\in\mathcal S_\varepsilon\), \(0<\rho<\bar\rho_\varepsilon\), and the deterministic target step sizes satisfy \(0<\gamma_t^{\mathrm{tar}}\le\gamma_{\varepsilon,\rho}^{\mathrm{adm}}\). Then
\begin{equation}
f(\Xbf_{t+1})
\le
f(\Xbf_t)
-\gamma_t\|\Psibf_t\|_*
+2\gamma_t\|\Hbf_t-\Psibf_t\|_*
+A_{\varepsilon,\rho}d_t
+L_*\kappa_\varepsilon\gamma_t^2,
\label{eq:separate-steps-objective-descent}
\end{equation}
where $A_{\varepsilon,\rho}
:= \rho G^{\mathrm{sym}}_\varepsilon +
L_*\kappa_\varepsilon\rho^2\varepsilon$.
\end{proposition}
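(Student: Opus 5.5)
The plan follows the proof of Proposition~\ref{prop:coupled-objective-descent}, with the tangential and normal steps handled separately. By Lemma~\ref{lem:separate-steps-safeguard} and the admissibility condition \(\gamma_t^{\mathrm{tar}}\le\gamma_{\varepsilon,\rho}^{\mathrm{adm}}\), both \(\Xbf_t\) and \(\Xbf_{t+1}\) lie in \(\mathcal S_\varepsilon\), and \(\gamma_t=\gamma_t^{\mathrm{tar}}\) pathwise. We may therefore apply the smoothness inequality of Proposition~\ref{prop:shared-basic-estimates}\ref{prop:shared-basic-estimates-descent} with \(\Ybf-\Xbf=-\Ebf_t\), which gives
\[
f(\Xbf_{t+1})\le f(\Xbf_t)-\gamma_t\langle\nabla f(\Xbf_t),\Omegabf_t\Xbf_t\rangle-\rho\langle\nabla f(\Xbf_t),\Xbf_t\Cbf_t\rangle+\tfrac{L_*}{2}\|\Ebf_t\|_{\mathrm{op}}^2 .
\]

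The two linear terms are handled with earlier results. For the tangential term, the relative-gradient identity of Appendix~\ref{app:shared-level-set-identities} gives \(\langle\nabla f(\Xbf_t),\Omegabf_t\Xbf_t\rangle=\langle\Psibf_t,\Omegabf_t\rangle\). Lemma~\ref{lem:momentum-alignment-bound} then bounds this from below by \(\|\Psibf_t\|_*-2\|\Hbf_t-\Psibf_t\|_*\), which produces the terms \(-\gamma_t\|\Psibf_t\|_*+2\gamma_t\|\Hbf_t-\Psibf_t\|_*\). For the normal term, Proposition~\ref{prop:shared-basic-estimates}\ref{prop:shared-basic-estimates-coupling} gives \(|\rho\langle\nabla f(\Xbf_t),\Xbf_t\Cbf_t\rangle|\le\rho G^{\mathrm{sym}}_\varepsilon d_t\), which is the first part of \(A_{\varepsilon,\rho}d_t\).

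The quadratic term is the step that needs care. From \eqref{eq:separate-steps-displacement-bound}, \(\|\Ebf_t\|_{\mathrm{op}}^2\le\kappa_\varepsilon(\gamma_t+\rho d_t)^2\le 2\kappa_\varepsilon(\gamma_t^2+\rho^2d_t^2)\) by \((a+b)^2\le2a^2+2b^2\). Multiplying by \(L_*/2\) gives \(L_*\kappa_\varepsilon\gamma_t^2+L_*\kappa_\varepsilon\rho^2 d_t^2\). Since \(d_t\le\varepsilon\) by safe-region invariance, \(d_t^2\le\varepsilon d_t\), so the second piece is at most \(L_*\kappa_\varepsilon\rho^2\varepsilon\,d_t\). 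This is exactly the second part of \(A_{\varepsilon,\rho}\), and it explains the coefficient \(L_*\kappa_\varepsilon\) (rather than \(L_*\kappa_\varepsilon/2\)) on \(\gamma_t^2\).

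The main obstacle is this quadratic term. Unlike in \algname{}, the normal step \(\rho\) does not shrink with \(t\), so the cross term \(\gamma_t\rho d_t\) and the term \(\rho^2d_t^2\) cannot be absorbed into an \(O(\gamma_t^2)\) remainder. The split above instead charges them to the residual \(d_t\), which Lemma~\ref{lem:separate-steps-residual-recursion} later controls through the summable bound \eqref{eq:separate-steps-feasibility-telescoping}. It remains to check that \eqref{eq:separate-steps-displacement-bound} itself holds. This follows from \(\|\Omegabf_t\|_{\mathrm{op}}\le1\), \(\|\Xbf_t\|_{\mathrm{op}}\le\sqrt{1+d_t}\le\sqrt{\kappa_\varepsilon}\), and \(\|\Cbf_t\|_{\mathrm{op}}=d_t\). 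Collecting all the terms gives \eqref{eq:separate-steps-objective-descent}.
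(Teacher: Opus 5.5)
Your proposal is correct and follows the paper's proof essentially step for step. Both use the smoothness inequality along $-\Ebf_t$, the relative-gradient identity with the polar-alignment bound for the tangential term, and the $G^{\mathrm{sym}}_\varepsilon$ coupling estimate for the normal term; the quadratic term is then split via $(\gamma_t+\rho d_t)^2\le2\gamma_t^2+2\rho^2d_t^2$ and $d_t^2\le\varepsilon d_t$, and your check of the displacement bound from $\|\Omegabf_t\|_{\mathrm{op}}\le1$ and $\|\Xbf_t\|_{\mathrm{op}}\le\sqrt{\kappa_\varepsilon}$ just spells out a detail the paper leaves implicit.
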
% <261013 C3-proof: retained original>

Unlike the \algname{} descent inequality, the residual term is not weighted by the tangential step size; it is controlled by Lemma~\ref{lem:separate-steps-residual-recursion}. Applying the operator--nuclear smoothness estimate established in Appendix~\ref{app:shared-basic-estimates-proof}, the relative-gradient identity, the polar-alignment bound, and the normal-coupling estimate gives
\[
f(\Xbf_{t+1})\le f(\Xbf_t)-\gamma_t\|\Psibf_t\|_*+2\gamma_t\|\Hbf_t-\Psibf_t\|_*+\rho G_\varepsilon^{\mathrm{sym}}d_t+\frac{L_*\kappa_\varepsilon}{2}(\gamma_t+\rho d_t)^2.
\]
Since \((\gamma_t+\rho d_t)^2\le2\gamma_t^2+2\rho^2d_t^2\) and \(d_t^2\le\varepsilon d_t\), the quadratic term is bounded by \(L_*\kappa_\varepsilon\gamma_t^2+L_*\kappa_\varepsilon\rho^2\varepsilon d_t\), proving \eqref{eq:separate-steps-objective-descent}.

\subsection{\algplus{} Tracking and Weighted Accumulation}
\label{app:separate-steps-tracking-accumulation}

Applying Proposition~\ref{prop:generic-momentum-tracking} with the displacement bound \eqref{eq:separate-steps-displacement-bound} yields
\begin{equation}
\label{eq:separate-steps-tracking-bound}
\mathbb E\|\Hbf_t-\Psibf_t\|_*\le\nu_{\varepsilon,B}\left[\beta^t+\sqrt{\frac{1-\beta}{1+\beta}(1-\beta^{2t})}\right]+L_{\Psi,\varepsilon}\sqrt{\kappa_\varepsilon}\sum_{j=0}^{t-1}\beta^{t-j}(\gamma_j+\rho\mathbb E d_j).
\end{equation}
The extra \(\rho\mathbb E d_j\) term comes from momentum lag under the fixed normal step. Multiplying \eqref{eq:separate-steps-tracking-bound} by \(\gamma_t\) and summing over \(t\), the tangential convolution is bounded as in Appendix~\ref{app:coupled-tracking-accumulation}, with \(\gamma_t\) in place of \(\tau_t\). For the normal contribution, \(\gamma_t\le\bar\gamma_T\) and \eqref{eq:separate-steps-feasibility-telescoping} imply
\[
\sum_{0\le j<t<T}\beta^{t-j}\gamma_t\mathbb E d_j\le\frac{\beta\bar\gamma_T}{(1-\beta)a_{\varepsilon,\rho}}(d_0+\kappa_\varepsilon Q_T).
\]
Combining the geometric-series bounds for the initialization and noise terms and dividing by \(G_T>0\) yields \(\mathcal Q_T^{\mathrm{track}}\) in \eqref{eq:separate-steps-rate-quantities}.

\subsection{Proof of Theorem~\ref{thm:separate-steps-finite-time}}
\label{app:separate-steps-finite-time-proof}

\begin{proof}
The pathwise feasibility assertion in \eqref{eq:separate-steps-finite-time-feasibility} follows from \eqref{eq:separate-steps-feasibility-telescoping}. Summing the objective-descent inequality \eqref{eq:separate-steps-objective-descent}, taking expectations, and using \(f(\Xbf_T)\ge f_\varepsilon^{\inf}\) gives
\[
\sum_{t=0}^{T-1}\gamma_t\mathbb E\|\Psibf_t\|_*\le\Delta_f+2\sum_{t=0}^{T-1}\gamma_t\mathbb E\|\Hbf_t-\Psibf_t\|_*+A_{\varepsilon,\rho}\sum_{t=0}^{T-1}\mathbb E d_t+L_*\kappa_\varepsilon Q_T.
\]
Applying \(\mathcal Q_T^{\mathrm{track}}\) in \eqref{eq:separate-steps-rate-quantities} and the residual-sum bound \eqref{eq:separate-steps-feasibility-telescoping}, then dividing by \(G_T>0\), proves the stationarity assertion in \eqref{eq:separate-steps-finite-time-feasibility}.
\end{proof}

\begin{remark}[Horizon-dependent steps and safeguard inactivity]
\label{rem:separate-steps-safeguard-inactivity}
In both schedules above, \(\gamma_T/\rho=(\gamma_0/\rho)T^{-\alpha}\), with \(\alpha\in\{1/2,3/4\}\), while the target \(\gamma_T\) remains constant within each run. The two timescales therefore refer to the relative sizes of the tangential and normal steps as the horizon grows. Lemma~\ref{lem:separate-steps-safeguard} provides a positive admissible cap independent of \(T\), so for all sufficiently large \(T\), the safeguard leaves the target unchanged and \(\gamma_t=\gamma_T\) pathwise throughout the run.
\end{remark}

\subsection{Proof of Corollary~\ref{cor:separate-steps-stochastic-momentum}}
\label{app:separate-steps-stochastic-rate-proof}
Set \(\gamma_T=\gamma_0T^{-3/4}\) and \(\beta_T=1-\beta_0T^{-1/2}\). For all sufficiently large \(T\), Lemma~\ref{lem:separate-steps-safeguard} gives \(\gamma_t=\gamma_T\) pathwise, so \(G_T=T\gamma_T\), \(Q_T=T\gamma_T^2\), and \(\bar\gamma_T=\gamma_T\). Substituting these identities into \eqref{eq:separate-steps-rate-quantities} and \eqref{eq:separate-steps-finite-time-feasibility}, with \(\sum_{t=0}^{T-1}\beta_T^t\le(1-\beta_T)^{-1}\), yields
\begin{equation}
\begin{aligned}
\frac1T\sum_{t=0}^{T-1}\mathbb E\|\Psibf_t\|_*
\le{}&
\frac{\Delta_f}{T\gamma_T}
+
2\nu_{\varepsilon,B}
\Bsbra{
\frac{1}{T(1-\beta_T)}
+
\sqrt{\frac{1-\beta_T}{1+\beta_T}}
}
\\
&+
2L_{\Psi,\varepsilon}\sqrt{\kappa_\varepsilon}
\frac{\beta_T}{1-\beta_T}
\Bsbra{
\gamma_T
+
\frac{\rho d_0}{a_{\varepsilon,\rho}T}
+
\frac{\rho\kappa_\varepsilon}{a_{\varepsilon,\rho}}\gamma_T^2
}
\\
&+
\frac{A_{\varepsilon,\rho}}{a_{\varepsilon,\rho}}
\Bsbra{
\frac{d_0}{T\gamma_T}
+
\kappa_\varepsilon\gamma_T
}
+
L_*\kappa_\varepsilon\gamma_T
=\mathcal O(T^{-1/4}).
\end{aligned}
\label{eq:separate-steps-stochastic-stationarity-explicit}
\end{equation}% <261013 C6-after: retained original>

The prescribed schedules give \((T\gamma_T)^{-1}=\gamma_0^{-1}T^{-1/4}\), \([T(1-\beta_T)]^{-1}=\beta_0^{-1}T^{-1/2}\), and \(\sqrt{(1-\beta_T)/(1+\beta_T)}\le\sqrt{\beta_0}T^{-1/4}\). Moreover, \(\beta_T\gamma_T/(1-\beta_T)=\mathcal O(T^{-1/4})\), while the remaining terms in the displayed bound have the same or smaller order. As \(B\) and \(d_0\) are fixed independently of \(T\), this proves the stationarity rate. For feasibility, \eqref{eq:separate-steps-feasibility-telescoping} gives \(T^{-1}\sum_{t=0}^{T-1}d_t\le d_0/(a_{\varepsilon,\rho}T)+\kappa_\varepsilon\gamma_0^2/(a_{\varepsilon,\rho}T^{3/2})\) pathwise. Since \(R\) is uniform and independent of the algorithmic randomness, \(\mathbb E\|\Psibf_R\|_*=T^{-1}\sum_{t=0}^{T-1}\mathbb E\|\Psibf_t\|_*\) and \(\mathbb E_R[d_R]=T^{-1}\sum_{t=0}^{T-1}d_t\) pathwise; hence \eqref{eq:separate-steps-stochastic-rates} follows. If \(d_0=0\), \eqref{eq:separate-steps-residual-floor} gives \(d_t\le c_{\varepsilon,\rho}^{\mathrm{res}}\gamma_0^2T^{-3/2}\) for every \(0\le t\le T\), proving \eqref{eq:separate-steps-stochastic-pathwise-feasibility}. Finally, \(N(\Xbf_t)\le(p/4)d_t^2\) gives \(\max_{0\le t\le T}N(\Xbf_t)=\mathcal O(pT^{-3})\).

\subsection{Proof of Corollary~\ref{cor:separate-steps-deterministic-no-momentum}}
\label{app:separate-steps-deterministic-rate-proof}

\begin{proof}
By Remark~\ref{rem:separate-steps-safeguard-inactivity}, \(\gamma_t=\gamma_T=\gamma_0T^{-1/2}\) throughout the run for all sufficiently large \(T\). Set \(\Zbf_t:=\Hbf_t-\Psibf_t\). Since exact gradients are used, \(\Zbf_0=0\), and the momentum recursion gives \(\Zbf_t=-\sum_{j=0}^{t-1}\beta^{t-j}(\Psibf_{j+1}-\Psibf_j)\). The relative-gradient Lipschitz estimate established in Appendix~\ref{app:shared-basic-estimates-proof}, together with \eqref{eq:separate-steps-displacement-bound}, yields \(\|\Zbf_t\|_*\le L_{\Psi,\varepsilon}\sqrt{\kappa_\varepsilon}\sum_{j=0}^{t-1}\beta^{t-j}(\gamma_T+\rho d_j)\). Summing over \(t\), interchanging the finite sums, and applying \eqref{eq:separate-steps-feasibility-telescoping}, we obtain
\begin{equation}
\label{eq:separate-steps-deterministic-tracking}
\frac1T\sum_{t=0}^{T-1}\|\Zbf_t\|_*\le L_{\Psi,\varepsilon}\sqrt{\kappa_\varepsilon}\frac{\beta}{1-\beta}\left[\gamma_T+\frac{\rho}{a_{\varepsilon,\rho}}\left(\frac{d_0}{T}+\kappa_\varepsilon\gamma_T^2\right)\right].
\end{equation}
Summing \eqref{eq:separate-steps-objective-descent}, using \(f(\Xbf_T)\ge f_\varepsilon^{\inf}\), and applying \eqref{eq:separate-steps-feasibility-telescoping} and \eqref{eq:separate-steps-deterministic-tracking} give the explicit stationarity bound \eqref{eq:separate-steps-deterministic-stationarity-explicit} after setting \(\gamma_T=\gamma_0T^{-1/2}\) and using \(\mathbb E_R\|\Psibf_R\|_*=T^{-1}\sum_{t=0}^{T-1}\|\Psibf_t\|_*\). The same substitution in \eqref{eq:separate-steps-feasibility-telescoping} gives \eqref{eq:separate-steps-deterministic-feasibility-explicit}, and the uniform-output identities for stationarity and \(\mathbb E_R[d_R]=T^{-1}\sum_{t=0}^{T-1}d_t\) imply \eqref{eq:separate-steps-deterministic-rates}. If \(d_0=0\), \eqref{eq:separate-steps-residual-floor} gives \(d_t\le c_{\varepsilon,\rho}^{\mathrm{res}}\gamma_0^2/T\) for all \(0\le t\le T\), proving \eqref{eq:separate-steps-deterministic-pathwise-feasibility}. Moreover, \(N(\Xbf_t)\le(p/4)d_t^2\) gives \(\max_{0\le t\le T}N(\Xbf_t)\le p(c_{\varepsilon,\rho}^{\mathrm{res}})^2\gamma_0^4/(4T^2)\). When \(\beta=0\), the tracking error vanishes.
\end{proof}

\section{Tail Guarantees from Infeasible Initialization}
\label{app:tail-output}

\subsection{A Shared Affine-Recursion Interface}
\label{app:shared-affine-tail-recursion}

For both algorithms, we first establish an affine-recursion bound and sufficient burn-in indices after which the decaying initialization term is at most the residual floor \(\bar d\).

\begin{lemma}
\label{lem:affine-residual-tail}
Let \(T\ge1\) and let \(\{d_t\}_{t=0}^T\) be a nonnegative sequence satisfying \(d_{t+1}\le qd_t+r\) pathwise for \(t=0,\ldots,T-1\), where \(q\in(0,1)\) and \(r>0\) are deterministic constants. Define \(\bar d:=r/(1-q)\). Then, for every \(t=0,\ldots,T\),
\begin{equation}
\label{eq:affine-residual-bound}
d_t
\le
q^td_0
+
\bar d
\brbra{
1-q^t
}.
\end{equation}

Moreover, define
\(K^{\mathrm{exact}}:=0\) if \(d_0\le\bar d\) and \(K^{\mathrm{exact}}:=\lceil\log(d_0/\bar d)/(-\log q)\rceil\) otherwise.
Then
\begin{equation}
\label{eq:affine-exact-tail-bound}
d_t
\le
2\bar d,
\qquad
K^{\mathrm{exact}}\le t\le T.
\end{equation}

\end{lemma}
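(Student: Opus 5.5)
The proof has two parts. The first is a standard unrolling of the affine recursion, which gives \eqref{eq:affine-residual-bound}. The second is a short check that the transient term \(q^td_0\) has dropped below \(\bar d\) once \(t\ge K^{\mathrm{exact}}\), which gives \eqref{eq:affine-exact-tail-bound}.

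For \eqref{eq:affine-residual-bound}, I will use induction on \(t\). At \(t=0\) the right-hand side equals \(d_0\), so the claim holds. Assume it holds at some \(t\le T-1\). Since \(q>0\), multiplying the inductive hypothesis by \(q\) preserves the inequality. Combined with \(d_{t+1}\le qd_t+r\), this gives
\(d_{t+1}\le q^{t+1}d_0+q\bar d(1-q^t)+r\).
Using \(r=\bar d(1-q)\), the last two terms equal \(\bar d(q-q^{t+1}+1-q)=\bar d(1-q^{t+1})\), which is the claim at \(t+1\). Equivalently, one can unroll the recursion to \(d_t\le q^td_0+r\sum_{j=0}^{t-1}q^j\) and sum the geometric series. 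This argument is identical to the one used for \eqref{eq:separate-steps-residual-floor}. Only nonnegativity of \(q\) and the pathwise validity of the recursion are needed, so randomness plays no role.

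For \eqref{eq:affine-exact-tail-bound}, I will drop the factor \(1-q^t\le1\) and use \(d_t\le q^td_0+\bar d\). It then suffices to show \(q^td_0\le\bar d\) for \(t\ge K^{\mathrm{exact}}\). If \(d_0\le\bar d\), this holds for every \(t\ge0\) because \(q^t\le1\). Otherwise \(\log(d_0/\bar d)>0\), and \(-\log q>0\) because \(q\in(0,1)\). The choice \(t\ge K^{\mathrm{exact}}\ge\log(d_0/\bar d)/(-\log q)\) gives \(t\log q\le-\log(d_0/\bar d)\), hence \(q^t\le\bar d/d_0\), and therefore \(q^td_0\le\bar d\). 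In both cases \(d_t\le2\bar d\) for \(K^{\mathrm{exact}}\le t\le T\). If \(K^{\mathrm{exact}}>T\), the range of \(t\) is empty and there is nothing to prove.

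I do not expect any real obstacle. The only step needing care is the direction of the inequality when dividing by \(\log q<0\), together with the case split \(d_0\le\bar d\) that keeps the logarithm positive and \(K^{\mathrm{exact}}\) well defined. Note that \(\bar d>0\) because \(r>0\), so the ratio \(d_0/\bar d\) is always defined.
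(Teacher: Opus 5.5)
Your proof is correct and follows essentially the same route as the paper: you unroll the affine recursion (your induction is equivalent to this), and then you use the definition of \(K^{\mathrm{exact}}\) to get \(q^t d_0\le\bar d\) on the tail. The only difference is in the case \(d_0\le\bar d\): the paper obtains the sharper bound \(d_t\le\bar d\) directly from \eqref{eq:affine-residual-bound}, whereas you settle for \(2\bar d\), which is all the statement requires.
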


\begin{proof}
Iterating the affine recursion and summing the geometric series gives \eqref{eq:affine-residual-bound}. If \(d_0\le\bar d\), then \(d_t\le\bar d\) for every \(t\). Otherwise, the definition of \(K^{\mathrm{exact}}\) implies \(q^{K^{\mathrm{exact}}}d_0\le\bar d\), and \eqref{eq:affine-residual-bound} yields \eqref{eq:affine-exact-tail-bound}.
\end{proof}

\subsection{Tail guarantees for \algname{}}
\label{app:coupled-tail-guarantees-proof}

For a constant target \(\tau_T>0\), define the burn-in index at which the initialization transient falls below the residual floor:
\begin{equation}
\label{eq:coupled-burn-in-index}
K_T:=
\begin{cases}
0,&d_0\le\Gamma_\varepsilon\tau_T,\\
\lceil(a_\varepsilon\tau_T)^{-1}\log(d_0/(\Gamma_\varepsilon\tau_T))\rceil,&d_0>\Gamma_\varepsilon\tau_T,
\end{cases}
\end{equation}% <261013 D2-tail-intro: retained original>

Set \(N_T:=T-K_T\). The following proposition establishes pathwise feasibility and averaged stationarity over the tail of the original run.
\begin{proposition}[Tail guarantees from infeasible initialization]
\label{prop:coupled-tail-guarantees}
Let $\Xbf_0\in\mathcal S_\varepsilon$ be the initial point.
For each schedule below, use $K_T$ and $N_T$ defined above.
All prefactors are independent of $T$. For all sufficiently large $T$,
the following conclusions hold.

\begin{enumerate}[label=\textnormal{(\roman*)},leftmargin=*,widest=ii]
\item
\textnormal{Stochastic momentum.}
Under Assumptions~\ref{ass:operator-nuclear-smoothness}
and~\ref{ass:stochastic-oracle}, with fixed $B$,
$\eta_t^{\mathrm{tar}}\equiv\tau_T=\eta_0T^{-3/4}$ and
$\beta=\beta_T=1-\beta_0T^{-1/2}$, where $\eta_0>0$ and
$\beta_0\in(0,1]$, we have $\tau_t=\tau_T$,
$K_T=\mathcal O(T^{3/4}\log T)$, $T/2\le N_T\le T$, and
\[
\frac1{N_T}\sum_{t=K_T}^{T-1}\mathbb E\|\Psibf_t\|_*
=\mathcal O(T^{-1/4}),
\qquad
\max_{K_T\le t\le T}d_t=\mathcal O(T^{-3/4})
\quad\text{pathwise}.
\]

\item
\textnormal{Deterministic momentum.}
Under Assumption~\ref{ass:operator-nuclear-smoothness}, with exact gradients, fixed \(\beta\in[0,1)\) independent of \(T\), and \(\eta_t^{\mathrm{tar}}\equiv\tau_T=\eta_0T^{-1/2}\), where \(\eta_0>0\), we have \(\tau_t=\tau_T\), \(K_T=\mathcal O(T^{1/2}\log T)\), \(T/2\le N_T\le T\), and
\[
\frac1{N_T}\sum_{t=K_T}^{T-1}\|\Psibf_t\|_*=\mathcal O(T^{-1/2}),\qquad
\max_{K_T\le t\le T}d_t=\mathcal O(T^{-1/2}).
\]

\end{enumerate}
\end{proposition}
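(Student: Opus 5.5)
The plan is to reduce the pathwise residual claims to Lemma~\ref{lem:affine-residual-tail}, and to obtain the tail stationarity bound by restarting the descent/tracking argument of Theorem~\ref{thm:coupled-finite-time} at index \(K_T\).

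First, for all sufficiently large \(T\), Remark~\ref{rem:coupled-safeguard-inactivity} gives \(\tau_t=\tau_T\) pathwise. Lemma~\ref{lem:coupled-residual-recursion} then yields \(d_{t+1}\le q d_t+r\) with \(q:=1-a_\varepsilon\tau_T\in(0,1)\) and \(r:=b_\varepsilon\tau_T^2\). The residual floor is \(\bar d=r/(1-q)=\Gamma_\varepsilon\tau_T\). Since \(-\log q\ge a_\varepsilon\tau_T\), the index \(K_T\) in \eqref{eq:coupled-burn-in-index} dominates \(K^{\mathrm{exact}}\). Lemma~\ref{lem:affine-residual-tail} then gives \(\max_{K_T\le t\le T}d_t\le2\Gamma_\varepsilon\tau_T\), which is \(\mathcal O(T^{-3/4})\) in case (i) and \(\mathcal O(T^{-1/2})\) in case (ii). Because \(d_0\le\varepsilon\) is fixed, \(K_T\le 1+(a_\varepsilon\eta_0)^{-1}T^{\alpha}\log(\varepsilon/(\Gamma_\varepsilon\eta_0 T^{-\alpha}))=\mathcal O(T^{\alpha}\log T)\) with \(\alpha\in\{3/4,1/2\}\). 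As \(\alpha<1\), this is at most \(T/2\) for large \(T\), so \(T/2\le N_T\le T\).

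For stationarity, I will sum the descent inequality \eqref{eq:coupled-objective-descent} over \(t=K_T,\dots,T-1\) and use \(f(\Xbf_{K_T})-f(\Xbf_T)\le \max_{\mathcal S_\varepsilon}f-f_\varepsilon^{\inf}=:\Delta_\varepsilon\), which is finite by compactness and Lemma~\ref{lem:coupled-safeguard}. Dividing by \(N_T\tau_T\) gives
\[
\frac1{N_T}\sum_{t=K_T}^{T-1}\mathbb E\|\Psibf_t\|_*\le\frac{\Delta_\varepsilon}{N_T\tau_T}+\frac{2}{N_T}\sum_{t=K_T}^{T-1}\mathbb E\|\Hbf_t-\Psibf_t\|_*+\lambda G^{\mathrm{sym}}_\varepsilon\max_{K_T\le t<T}d_t+\frac{L_*M_\varepsilon^2}{2}\tau_T .
\]
The feasibility term is \(\mathcal O(\tau_T)\) by the tail residual bound above. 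Using \(N_T\ge T/2\), the first term is \(\mathcal O(1/(T\tau_T))\).

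The tracking term is where the work lies. I will apply \eqref{eq:coupled-tracking-bound} pointwise in \(t\), which holds for all \(t\) regardless of where the window starts. The lag part is bounded by \(L_{\Psi,\varepsilon}M_\varepsilon\beta\tau_T/(1-\beta)\). The stationary noise part is bounded by \(\nu_{\varepsilon,B}\sqrt{(1-\beta)/(1+\beta)}\). For the initialization transient I will use \(\sum_{t\ge K_T}\beta^t\le(1-\beta)^{-1}\), giving a contribution at most \(2\nu_{\varepsilon,B}/(N_T(1-\beta))\le 4\nu_{\varepsilon,B}/(T(1-\beta))\).

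Substituting the schedules then finishes both cases. In case (i), with \(\tau_T=\eta_0T^{-3/4}\) and \(1-\beta_T=\beta_0T^{-1/2}\), the terms are of order \(T^{-1/4}\), \(T^{-1/4}\), \(T^{-1/2}\), \(T^{-1/4}\) (from \(\tau_T/(1-\beta_T)\)), and \(T^{-3/4}\), so the total is \(\mathcal O(T^{-1/4})\). In case (ii) there is no noise, \(\beta\) is fixed and \(\tau_T=\eta_0T^{-1/2}\), so every term is \(\mathcal O(T^{-1/2})\).

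The main obstacle is making sure that neither the momentum state inherited at \(K_T\) nor the objective gap at the window start degrades the rate. Both are handled by uniform bounds: the tracking estimate is pointwise in \(t\), and \(f\) is bounded on the compact set \(\mathcal S_\varepsilon\).
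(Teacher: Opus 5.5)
Your proof is correct. The feasibility half matches the paper's argument exactly: the affine recursion with $q=1-a_\varepsilon\tau_T$ and $r=b_\varepsilon\tau_T^2$, the comparison $K_T\ge K^{\mathrm{exact}}$ via $-\log q\ge a_\varepsilon\tau_T$, and $K_T=\mathcal O(T^\alpha\log T)$.

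For tail stationarity you take a different route from the paper.

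\emph{The paper's route.} It never re-sums on the window. It uses nonnegativity and $N_T\ge T/2$ to get $\frac1{N_T}\sum_{t=K_T}^{T-1}u_t\le\frac2T\sum_{t=0}^{T-1}u_t$. It then plugs in the full-run bound: $\mathcal R_T^{\mathrm{stat}}$ of Theorem~\ref{thm:coupled-finite-time} in case (i), or \eqref{eq:coupled-deterministic-stationarity-explicit} in case (ii). The only extra check is that the infeasible-start term $\lambda G^{\mathrm{sym}}_\varepsilon d_0/(a_\varepsilon T\tau_T)$ is still of order $T^{-1/4}$, resp.\ $T^{-1/2}$.

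\emph{Your route.} You sum \eqref{eq:coupled-objective-descent} only over $t=K_T,\dots,T-1$. You replace $\Delta_f$ by the uniform gap $\max_{\mathcal S_\varepsilon}f-f^{\inf}_\varepsilon$. You bound the feasibility term by the pathwise tail residual $2\Gamma_\varepsilon\tau_T$ rather than the telescoped sum. You reuse the pointwise tracking bound \eqref{eq:coupled-tracking-bound} on the window, with $\sum_{t\ge K_T}\beta^t\le(1-\beta)^{-1}$ for the transient.

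\emph{What each buys.} The paper's argument is shorter, reuses existing estimates wholesale, and keeps the sharper objective constant $f(\Xbf_0)-f^{\inf}_\varepsilon$. Yours gives a genuinely post-burn-in estimate: $d_0$ affects the stationarity bound only through $K_T$, with no $d_0/(T\tau_T)$ term. The price is the cruder oscillation constant of $f$ over $\mathcal S_\varepsilon$.

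One small bookkeeping point for case (ii): Assumption~\ref{ass:stochastic-oracle} is not in force there. The tracking bound should therefore be read with $\nu_{\varepsilon,B}=0$, since exact gradients make every sampling error $\Psibf_t^{(B)}-\Psibf_t$ vanish. Equivalently, use \eqref{eq:coupled-deterministic-tracking} directly.
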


\begin{proof}
For either schedule, write \(\tau_T=\eta_0T^{-\alpha}\) with \(\alpha\in\{1/2,3/4\}\). For all sufficiently large \(T\), \(\tau_T\le\eta_\varepsilon^\star\), and Lemma~\ref{lem:coupled-safeguard} ensures \(\tau_t=\tau_T\) pathwise throughout the run.

\emph{Burn-in and tail feasibility.} Applying Lemma~\ref{lem:affine-residual-tail} to \eqref{eq:coupled-residual-recursion} with \(q=1-a_\varepsilon\tau_T\), \(r=b_\varepsilon\tau_T^2\), and \(\bar d=\Gamma_\varepsilon\tau_T\), the inequality \(-\log(1-a_\varepsilon\tau_T)\ge a_\varepsilon\tau_T\) shows that \(K_T\) from \eqref{eq:coupled-burn-in-index} satisfies \(K_T\ge K^{\mathrm{exact}}\). Thus \eqref{eq:affine-exact-tail-bound} gives
\[
\max_{K_T\le t\le T}d_t\le2\Gamma_\varepsilon\tau_T\qquad\text{pathwise}.
\]
For \(K_T>0\), its definition gives \(K_T\le1+(a_\varepsilon\eta_0)^{-1}T^\alpha\log(d_0T^\alpha/(\Gamma_\varepsilon\eta_0))\). Hence \(K_T=\mathcal O(T^\alpha\log T)=o(T)\), also when \(K_T=0\). For all sufficiently large \(T\), \(0\le K_T\le T/2\), so \(T/2\le N_T\le T\). Taking \(\alpha=3/4\) and \(\alpha=1/2\) proves both pathwise tail feasibility claims.

\emph{Tail stationarity.} Since \(K_T,N_T\) are deterministic and \(u_t\ge0\),
\[
\frac1{N_T}\sum_{t=K_T}^{T-1}u_t\le\frac{T}{N_T}\frac1T\sum_{t=0}^{T-1}u_t\le\frac2T\sum_{t=0}^{T-1}u_t.
\]
\emph{(i) Stochastic momentum.} Taking \(u_t=\mathbb E\|\Psibf_t\|_*\) and applying the general full-run estimate in Theorem~\ref{thm:coupled-finite-time} with \(W_T=T\tau_T\) and \(Q_T=T\tau_T^2\) yield
\[
\frac1{N_T}\sum_{t=K_T}^{T-1}\mathbb E\|\Psibf_t\|_*\le\frac{T}{N_T}\mathcal R_T^{\mathrm{stat}}\le2\mathcal R_T^{\mathrm{stat}}.
\]
For \(\tau_T=\eta_0T^{-3/4}\) and \(\beta_T=1-\beta_0T^{-1/2}\), the objective-gap and infeasible-initialization terms in \eqref{eq:coupled-rate-quantities} are bounded by \((\Delta_f+\lambda G_\varepsilon^{\mathrm{sym}}d_0/a_\varepsilon)\eta_0^{-1}T^{-1/4}\). Moreover, \([T(1-\beta_T)]^{-1}=\mathcal O(T^{-1/2})\), \(\sqrt{(1-\beta_T)/(1+\beta_T)}=\mathcal O(T^{-1/4})\), and \(\beta_T\tau_T/(1-\beta_T)=\mathcal O(T^{-1/4})\); the remaining terms are \(\mathcal O(T^{-3/4})\). Fixed \(B,d_0\) therefore give \(\mathcal R_T^{\mathrm{stat}}=\mathcal O(T^{-1/4})\). This invocation uses Theorem~\ref{thm:coupled-finite-time}, not the feasible-initialization Corollary~\ref{cor:coupled-stochastic-momentum}.

\emph{(ii) Deterministic momentum.} Taking \(u_t=\|\Psibf_t\|_*\) and applying the explicit full-run bound \eqref{eq:coupled-deterministic-stationarity-explicit} of Corollary~\ref{cor:coupled-deterministic-no-momentum} gives \(\mathcal O(T^{-1/2})\) on the tail because \(T/N_T\le2\) and \(\beta<1\) is fixed. Both parts concern the original trajectory, with no restart of iterates or momentum at \(K_T\). Together with the pathwise feasibility bound, this completes the proof.
\end{proof}

\subsection{Tail guarantees for \algplus{}}
\label{app:separate-steps-tail-guarantees-proof}

Using \eqref{eq:separate-steps-residual-floor}, choose \(K_T\) so that \(q_{\varepsilon,\rho}^{K_T}d_0\le c_{\varepsilon,\rho}^{\mathrm{res}}\gamma_T^2\):
\begin{equation}
K_T:=
\begin{cases}
0,&d_0\le c_{\varepsilon,\rho}^{\mathrm{res}}\gamma_T^2,\\
\lceil\log(d_0/(c_{\varepsilon,\rho}^{\mathrm{res}}\gamma_T^2))/(-\log q_{\varepsilon,\rho})\rceil,&d_0>c_{\varepsilon,\rho}^{\mathrm{res}}\gamma_T^2,
\end{cases}
\label{eq:separate-steps-feasibility-transient}
\end{equation}% <261013 D3-tail-intro: retained original>

Set \(N_T:=T-K_T\). The following proposition gives feasibility and averaged stationarity over the tail of the same run.

\begin{proposition}[Tail guarantees from infeasible initialization]
\label{prop:separate-steps-tail-guarantees}
Let $\Xbf_0\in\mathcal S_\varepsilon$ be the initial point.
For each schedule below, use $K_T$ and $N_T$ defined above.
All prefactors are independent of $T$.
For all sufficiently large $T$, $K_T=\mathcal O(\log T)$,
$T/2\le N_T\le T$, and the following conclusions hold.

\begin{enumerate}[label=\textnormal{(\roman*)},leftmargin=*,widest=ii]
\item
\textnormal{Stochastic momentum.}
Under the assumptions and schedule of
Corollary~\ref{cor:separate-steps-stochastic-momentum},
\begin{equation}
\frac1{N_T}\sum_{t=K_T}^{T-1}\mathbb E\|\Psibf_t\|_*
=\mathcal O(T^{-1/4}),
\label{eq:separate-steps-tail-stochastic-stationarity}
\end{equation}
and, pathwise,
\begin{equation}
\max_{K_T\le t\le T}d_t=\mathcal O(T^{-3/2}),
\qquad
\max_{K_T\le t\le T}N(\Xbf_t)=\mathcal O(pT^{-3}).
\label{eq:separate-steps-tail-stochastic-feasibility}
\end{equation}

\item
\textnormal{Deterministic momentum.}
Under the assumptions and schedule of
Corollary~\ref{cor:separate-steps-deterministic-no-momentum},
\begin{equation}
\frac1{N_T}\sum_{t=K_T}^{T-1}\|\Psibf_t\|_*=\mathcal O(T^{-1/2}),
\label{eq:separate-steps-tail-deterministic-stationarity}
\end{equation}
while
\begin{equation}
\max_{K_T\le t\le T}d_t=\mathcal O(T^{-1}),
\qquad
\max_{K_T\le t\le T}N(\Xbf_t)=\mathcal O(pT^{-2}).
\label{eq:separate-steps-tail-deterministic-feasibility}
\end{equation}
\end{enumerate}
\end{proposition}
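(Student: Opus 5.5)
The argument mirrors the proof of Proposition~\ref{prop:coupled-tail-guarantees}, with two changes: the contraction factor no longer depends on \(T\), and the residual floor now scales with \(\gamma_T^2\). I will split it into burn-in and tail feasibility, then tail stationarity.

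\emph{Burn-in and tail feasibility.} By Remark~\ref{rem:separate-steps-safeguard-inactivity}, for all sufficiently large \(T\) we have \(\gamma_T\le\gamma^{\mathrm{adm}}_{\varepsilon,\rho}\), hence \(\gamma_t=\gamma_T\) pathwise. Lemma~\ref{lem:separate-steps-residual-recursion} then gives the affine recursion \(d_{t+1}\le q_{\varepsilon,\rho}d_t+\kappa_\varepsilon\gamma_T^2\) with deterministic \(q_{\varepsilon,\rho}\in(0,1)\) independent of \(T\). Apply Lemma~\ref{lem:affine-residual-tail} with \(q=q_{\varepsilon,\rho}\) and \(r=\kappa_\varepsilon\gamma_T^2\), so that \(\bar d=c^{\mathrm{res}}_{\varepsilon,\rho}\gamma_T^2\). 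The index \(K_T\) in \eqref{eq:separate-steps-feasibility-transient} is exactly \(K^{\mathrm{exact}}\). Hence \(\max_{K_T\le t\le T}d_t\le 2c^{\mathrm{res}}_{\varepsilon,\rho}\gamma_T^2\) pathwise, which equals \(2c^{\mathrm{res}}_{\varepsilon,\rho}\gamma_0^2T^{-3/2}\) in case (i) and \(2c^{\mathrm{res}}_{\varepsilon,\rho}\gamma_0^2T^{-1}\) in case (ii). Combining this with \(N(\Xbf)\le\frac p4 d(\Xbf)^2\) gives the \(\mathcal O(pT^{-3})\) and \(\mathcal O(pT^{-2})\) bounds. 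For the burn-in length, when \(K_T>0\),
\[
K_T\le 1+\frac{\log d_0-\log(c^{\mathrm{res}}_{\varepsilon,\rho}\gamma_0^2)+2\alpha\log T}{-\log q_{\varepsilon,\rho}},
\]
with \(\alpha\in\{1/2,3/4\}\). Since \(d_0\le\varepsilon\) and \(q_{\varepsilon,\rho}\) are \(T\)-independent, this is \(\mathcal O(\log T)\). Therefore \(K_T\le T/2\) for large \(T\), and so \(T/2\le N_T\le T\).

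\emph{Tail stationarity.} \(K_T\) and \(N_T\) are deterministic and the summands \(u_t\ge0\), so
\[
\frac1{N_T}\sum_{t=K_T}^{T-1}u_t\le\frac2T\sum_{t=0}^{T-1}u_t.
\]
For (i), take \(u_t=\mathbb E\|\Psibf_t\|_*\) and invoke the full-run bound \eqref{eq:separate-steps-stochastic-stationarity-explicit} from Corollary~\ref{cor:separate-steps-stochastic-momentum}, which gives \(\mathcal O(T^{-1/4})\). That corollary already allows \(\Xbf_0\in\mathcal S_\varepsilon\) with \(d_0\) fixed; the \(d_0\)-dependent terms there are \(\mathcal O(T^{-1/4})\) or smaller, since \(d_0/(T\gamma_T)=\mathcal O(T^{-1/4})\) and \(\beta_T d_0/((1-\beta_T)T)=\mathcal O(T^{-1/2})\). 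For (ii), take \(u_t=\|\Psibf_t\|_*\) and use \eqref{eq:separate-steps-deterministic-stationarity-explicit} with fixed \(\beta<1\), which gives \(\mathcal O(T^{-1/2})\). Both statements concern the original trajectory, with no restart at \(K_T\).

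\emph{Main obstacle.} The only delicate point is the burn-in estimate. One must check that \(-\log q_{\varepsilon,\rho}\) is bounded below independently of \(T\), which holds because \(\rho\) is fixed, and that the logarithm of \(1/\gamma_T^2\) contributes only \(\mathcal O(\log T)\). This is the step that sets \algplus{} apart from \algname{}, where the burn-in involves the factor \(1/\tau_T\). The remaining steps are substitutions into results already established.
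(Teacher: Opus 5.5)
Your proposal is correct and follows the paper's proof essentially step for step. It uses Lemma~\ref{lem:affine-residual-tail} with \(q=q_{\varepsilon,\rho}\) and \(r=\kappa_\varepsilon\gamma_T^2\), identifies \(K_T=K^{\mathrm{exact}}\) to obtain an \(\mathcal O(\log T)\) burn-in, and combines the averaging inequality \(\frac1{N_T}\sum_{t=K_T}^{T-1}u_t\le\frac2T\sum_{t=0}^{T-1}u_t\) with the full-run explicit bounds of the two corollaries, checking, as the paper does, that the \(d_0\)-dependent terms do not worsen the \(\mathcal O(T^{-1/4})\) stochastic rate.
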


\begin{proof}
For both schedules, write \(\gamma_T=\gamma_0T^{-\alpha}\) with \(\alpha\in\{1/2,3/4\}\). For all sufficiently large \(T\), Lemma~\ref{lem:separate-steps-safeguard} gives \(\gamma_t=\gamma_T\) pathwise throughout the run. Applying Lemma~\ref{lem:affine-residual-tail} to \eqref{eq:separate-steps-residual-recursion} with \(q=q_{\varepsilon,\rho}\) and \(r=\kappa_\varepsilon\gamma_T^2\) gives the residual floor \(\bar d=c_{\varepsilon,\rho}^{\mathrm{res}}\gamma_T^2\). Since \(K_T\) in \eqref{eq:separate-steps-feasibility-transient} is exactly \(K^{\mathrm{exact}}\), \eqref{eq:affine-exact-tail-bound} yields
\begin{equation}
\label{eq:separate-steps-tail-residual}
\max_{K_T\le t\le T}d_t\le2c_{\varepsilon,\rho}^{\mathrm{res}}\gamma_T^2\qquad\text{pathwise}.
\end{equation}
For \(K_T>0\), the definition gives \(K_T\le1+[\log(d_0/(c_{\varepsilon,\rho}^{\mathrm{res}}\gamma_0^2))+2\alpha\log T]/(-\log q_{\varepsilon,\rho})\). As \(q_{\varepsilon,\rho}\in(0,1)\) is independent of \(T\), \(K_T=\mathcal O(\log T)=o(T)\), also when \(K_T=0\). Thus, for all sufficiently large \(T\), \(T/2\le N_T\le T\). Substituting the stochastic and deterministic schedules into \eqref{eq:separate-steps-tail-residual} yields the two tail residual rates; \(N(\Xbf_t)\le(p/4)d_t^2\) gives their squared-penalty counterparts.

For tail stationarity, the nonnegativity of \(u_t\) and \(T/N_T\le2\) give
\[
\frac1{N_T}\sum_{t=K_T}^{T-1}u_t\le\frac{T}{N_T}\frac1T\sum_{t=0}^{T-1}u_t\le\frac2T\sum_{t=0}^{T-1}u_t.
\]
\emph{(i) Stochastic momentum.} Taking \(u_t=\mathbb E\|\Psibf_t\|_*\) and applying the full-run explicit bound \eqref{eq:separate-steps-stochastic-stationarity-explicit} gives \(\mathcal O(T^{-1/4})\) on the tail. In particular, \(d_0/(T\gamma_T)=(d_0/\gamma_0)T^{-1/4}\) does not change the rate.

\emph{(ii) Deterministic momentum.} Since \(R\) is uniform, the randomized-output bound \eqref{eq:separate-steps-deterministic-stationarity-explicit} is identical to the corresponding full-run time-average bound. Taking \(u_t=\|\Psibf_t\|_*\) and applying the full-run explicit bound \eqref{eq:separate-steps-deterministic-stationarity-explicit} gives \(\mathcal O(T^{-1/2})\) on the tail, since \(\beta<1\) is fixed independently of \(T\). Neither the iterates nor the momentum are restarted at \(K_T\). Together with the pathwise feasibility bounds, this completes the proof.
\end{proof}

\section{A Refined Feasibility Bound for Deterministic Landing}
\label{app:additional-structural-results}

\label{app:deterministic-landing-frob-residual}

For deterministic Landing, Proposition~9 of \citet{ablin2024infeasible} controls the squared feasibility measure $N(\Xbf)=\frac14\|\Xbf^\top\Xbf-\Ibf_p\|_F^2$. Tracking the unsquared Frobenius residual directly yields the following refinement.

\begin{proposition}[Frobenius feasibility residual for deterministic Landing]
\label{prop:deterministic-landing-frob-residual}
Let \(\Cbf_k:=\Xbf_k^\top\Xbf_k-\Ibf_p\), \(r_k:=\|\Cbf_k\|_F\), \(N_k:=\frac14r_k^2\), and \(g_k:=\|\grad f(\Xbf_k)\|_F\), and consider \(\Xbf_{k+1}=\Xbf_k-\eta\Lambdabf_k\) with \(\Lambdabf_k:=\grad f(\Xbf_k)+\lambda\Xbf_k\Cbf_k\).
Assume \(0<\eta\le(2\lambda)^{-1}\) and \(r_k\le\varepsilon<1\) for all \(k\).
Let \(\mathcal L\) be the merit function defined in \citet[Equation~(16)]{ablin2024infeasible}, let \(\mathcal L^\star\) be a lower bound for \(\mathcal L\) on the Frobenius safe region \(\{\Xbf:\|\Xbf^\top\Xbf-\Ibf_p\|_F\le\varepsilon\}\), let \(\nu>0\) be the feasibility coefficient in their Proposition~6, and set \(\Delta_{\mathcal L}:=\mathcal L(\Xbf_0)-\mathcal L^\star\).
Further suppose that their two deterministic merit-function estimates in Proposition~9 hold, namely
\(\sum_{k=1}^K g_k^2\le\frac{4\Delta_{\mathcal L}}{\eta},\qquad\sum_{k=1}^K N_k\le\frac{2\Delta_{\mathcal L}}{\eta\nu}\).
Define \(a:=2\lambda(1-\varepsilon)\), \(b:=4\lambda^2(1+\varepsilon)\), and \(C_{\mathrm{res}}:=r_1/(a\eta)+4\Delta_{\mathcal L}/a+2b\Delta_{\mathcal L}/(a\nu)\).
Then, for every \(K\ge1\),
\[
\sum_{k=1}^K r_k\le C_{\mathrm{res}},
\qquad
\min_{\lceil K/2\rceil\le k\le K}r_k\le\frac{2C_{\mathrm{res}}}{K}=\mathcal O(K^{-1}).
\]
In particular, \(\frac1K\sum_{k=1}^Kr_k\le C_{\mathrm{res}}/K=\mathcal O(K^{-1})\).
\end{proposition}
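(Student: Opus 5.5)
The plan is to reproduce, for the unsquared Frobenius residual \(r_k\), the same affine-recursion-and-telescoping argument used in Lemma~\ref{lem:coupled-residual-recursion}, and then to feed in the two summed merit-function estimates assumed from Proposition~9 of \citet{ablin2024infeasible}.

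\emph{Step 1 (Gram identity).} Since \(\grad f(\Xbf_k)=\Psibf_k\Xbf_k\) with \(\Psibf_k\in\operatorname{Skew}(n)\), the first-order tangential contribution to the Gram matrix cancels. This is the same computation as in \eqref{eq:coupled-gram-identity}, and it gives
\[
\Cbf_{k+1}=\Cbf_k-2\lambda\eta(\Cbf_k+\Cbf_k^2)+\eta^2\Lambdabf_k^\top\Lambdabf_k .
\]

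\emph{Step 2 (spectral contraction in Frobenius norm).} Diagonalize the symmetric matrix \(\Cbf_k\), with eigenvalues \(c_i\). Since \(|c_i|\le r_k\le\varepsilon\), each eigenvalue of \((1-2\lambda\eta)\Cbf_k-2\lambda\eta\Cbf_k^2\) equals \(c_i\bigl(1-2\lambda\eta(1+c_i)\bigr)\). Using \(1-\varepsilon\le1+c_i\le1+\varepsilon\) and \(2\lambda\eta\le1\), the factor \(1-2\lambda\eta(1+c_i)\) lies in \([-\varepsilon,\,1-a\eta]\). Moreover, \(a\eta=2\lambda\eta(1-\varepsilon)\le1-\varepsilon\) gives \(\varepsilon\le1-a\eta\), so the absolute value of the factor is at most \(1-a\eta\). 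Hence
\[
r_{k+1}\le(1-a\eta)r_k+\eta^2\|\Lambdabf_k\|_F^2 .
\]
I expect this eigenvalue bookkeeping to be the only delicate step. It is exactly where the unsquared Frobenius norm gives a clean linear contraction, whereas tracking \(N_k\) would produce a quadratic one.

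\emph{Step 3 (bounding the forcing term).} The tangential part \(\Psibf_k\Xbf_k\) and the normal part \(\Xbf_k\Cbf_k\) are Frobenius-orthogonal (Appendix~\ref{app:shared-level-set-identities}). Combined with \(\|\Xbf_k\|_{\mathrm{op}}^2\le1+\varepsilon\), which holds since \(\|\Cbf_k\|_{\mathrm{op}}\le r_k\le\varepsilon\) and Lemma~\ref{lem:safe-region-bounds} applies, this yields
\[
\|\Lambdabf_k\|_F^2=g_k^2+\lambda^2\|\Xbf_k\Cbf_k\|_F^2\le g_k^2+\lambda^2(1+\varepsilon)r_k^2=g_k^2+bN_k .
\]

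\emph{Step 4 (telescoping and conclusion).} Rearranging gives \(a\eta\, r_k\le r_k-r_{k+1}+\eta^2(g_k^2+bN_k)\). Summing over \(k=1,\dots,K\) and dropping \(-r_{K+1}\le0\), then inserting the two assumed sums, gives
\[
a\eta\sum_{k=1}^K r_k\le r_1+\eta^2\Bigl(\frac{4\Delta_{\mathcal L}}{\eta}+\frac{2b\Delta_{\mathcal L}}{\eta\nu}\Bigr).
\]
Dividing by \(a\eta\) yields exactly \(C_{\mathrm{res}}\).

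The averaged bound follows immediately. For the minimum bound, the window \(\lceil K/2\rceil\le k\le K\) contains at least \(K/2\) indices. Since \(r_k\ge0\), the minimum over the window is at most the window average, which is at most \(C_{\mathrm{res}}/(K/2)=2C_{\mathrm{res}}/K\).
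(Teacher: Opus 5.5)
Your proposal is correct and follows the paper's proof essentially step for step. You use the right-Gram identity, the eigenvalue-wise contraction \(\|\Cbf_k-2\lambda\eta(\Cbf_k+\Cbf_k^2)\|_F\le(1-a\eta)r_k\) (which you usefully spell out where the paper only invokes ``a scalar spectral estimate''), the Frobenius-orthogonal splitting \(\|\Lambdabf_k\|_F^2\le g_k^2+bN_k\), telescoping with the two assumed Proposition~9 sums, and the half-window averaging for the minimum.
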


\begin{proof}
The right-Gram identity \eqref{eq:coupled-gram-identity} applies to the deterministic Landing update with \(\tau_t=\eta\), \(\Dbf_t=\Lambdabf_k\), and \(\grad f(\Xbf_k)=\Psibf_k\Xbf_k\) for skew-symmetric \(\Psibf_k\).
In particular, \(\Xbf_k^\top(\lambda\Xbf_k\Cbf_k)+(\lambda\Xbf_k\Cbf_k)^\top\Xbf_k=2\lambda(\Cbf_k+\Cbf_k^2)\).
Since $r_k\le\varepsilon<1$ and
$\eta\le(2\lambda)^{-1}$, a scalar spectral estimate yields
\(\|\Cbf_k-2\eta\lambda(\Cbf_k+\Cbf_k^2)\|_F\le(1-a\eta)r_k\).
Moreover, the tangential and normal components of the Landing field are Frobenius-orthogonal and $\|\Xbf_k\|_{\mathrm{op}}^2\le1+\varepsilon$, so \(\|\Lambdabf_k\|_F^2\le g_k^2+\lambda^2(1+\varepsilon)r_k^2=g_k^2+bN_k\).
Hence
\[
a\eta r_k\le r_k-r_{k+1}+\eta^2g_k^2+b\eta^2N_k.
\]
Summing from $k=1$ to $K$ and applying the two estimates from Proposition~9 of \citet{ablin2024infeasible} gives \(a\eta\sum_{k=1}^K r_k\le r_1+4\eta\Delta_{\mathcal L}+\frac{2b\eta}{\nu}\Delta_{\mathcal L}\), which yields $\sum_{k=1}^K r_k\le C_{\mathrm{res}}$.
\samepage
The average bound is immediate, while \(\min_{\lceil K/2\rceil\le k\le K}r_k\le\frac2K\sum_{k=\lceil K/2\rceil}^Kr_k\le2C_{\mathrm{res}}/K\).
\end{proof}

The proposition bounds average and second-half-best unsquared residuals, not the last-iterate rate $\|\Xbf_K^\top\Xbf_K-\Ibf_p\|_F=\mathcal O(K^{-1})$. Its proof controls quadratic forcing via summability of $g_k^2+bN_k$, which does not automatically extend to a polar-normalized tangential direction.

\raggedbottom
\section{Additional Experimental Results}
\label{app:additional-experimental-results}

\subsection{Residual Scaling}
\label{app:exp1-residual-scaling}

\begin{table}[H]
\centering
\small
\setlength{\tabcolsep}{7pt}
\begin{tabular}{@{}lccc@{}}
\hline
Method & Fitted slope & Predicted scaling & Safe-trigger rate \\
\hline
\algname & \(-0.500004\) & \(T^{-1/2}\) & \(0\) \\
\algplus & \(-1.000004\) & \(T^{-1}\) & \(0\) \\
\hline
\end{tabular}
\caption{\textbf{Residual scaling.} Fits, predicted rates, and safeguard activity (Figure~\ref{fig:exp1-residual-scaling}).}
\label{tab:exp1-residual-scaling}
\end{table}

\subsection{\texorpdfstring{Additional Online PCA and Mini-Batch Real-Data ICA Results}{Additional Online PCA and Mini-Batch Real-Data ICA Results}}
\label{app:exp2-online-pca}

\begin{figure}[!htb]
	\centering
	\includegraphics[width=0.94\linewidth]{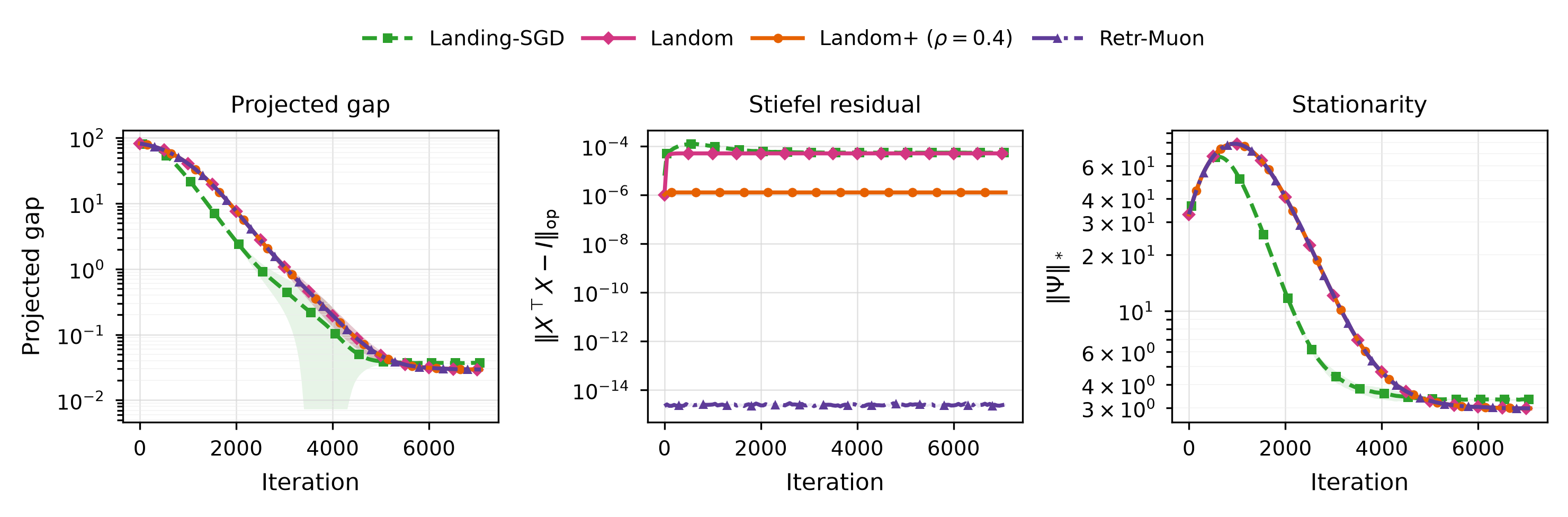}
	\caption{\textbf{Easy spectrum.} Additional online PCA results under
		the easy spectrum.}
	\label{fig:exp2-easy-appendix}
\end{figure}

\begin{figure}[!htb]
\centering
\includegraphics[width=0.94\linewidth]{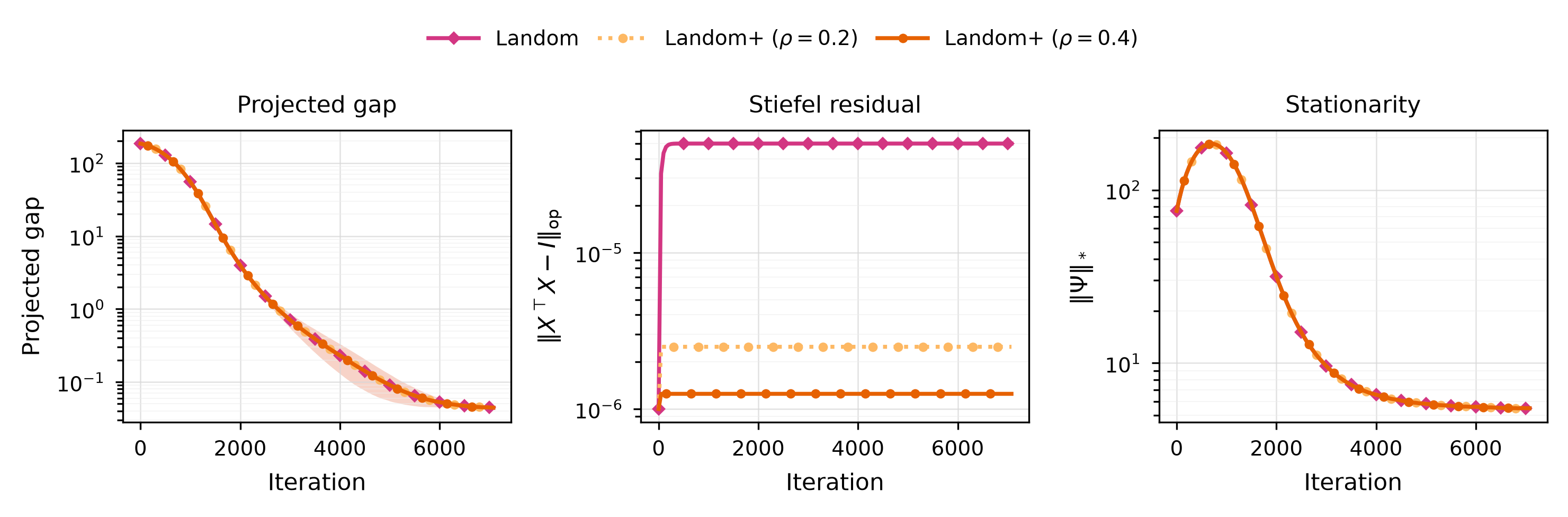}
\caption{\textbf{Normal-step sensitivity.} Effect of \(\rho\) on \algplus{} in stochastic online PCA.}
\label{fig:exp2-rho-robustness}
\end{figure}

\label{app:exp3-ica-additional}

\FloatBarrier
\noindent\textbf{Mini-batch real-data ICA.} On the same real-data ICA task (mini-batch size \(B=256\)), we compare Landing-SGD, stochastic \algname{}, and stochastic \algplus{}. All curves in Figure~\ref{fig:exp3-ica-minibatch-selected} use \(f_{\mathrm{ref}}:=\min_{\Xbf\in\mathcal R_{\mathrm{mb}}}f(\Pi_{\mathrm{St}}(\Xbf))\), with \(\mathcal R_{\mathrm{mb}}\) the formal mini-batch reference pool; this value is not a certified global optimum.
Figure~\ref{fig:exp3-ica-minibatch-selected} shows the same qualitative
behavior as in the full-batch experiment: the polar-factor methods
achieve smaller objective gaps after projection and stationarity measures than
Landing-SGD, while \algplus\ further improves feasibility
relative to \algname.

\begin{figure}[htbp]
\centering
\includegraphics[width=0.96\linewidth]{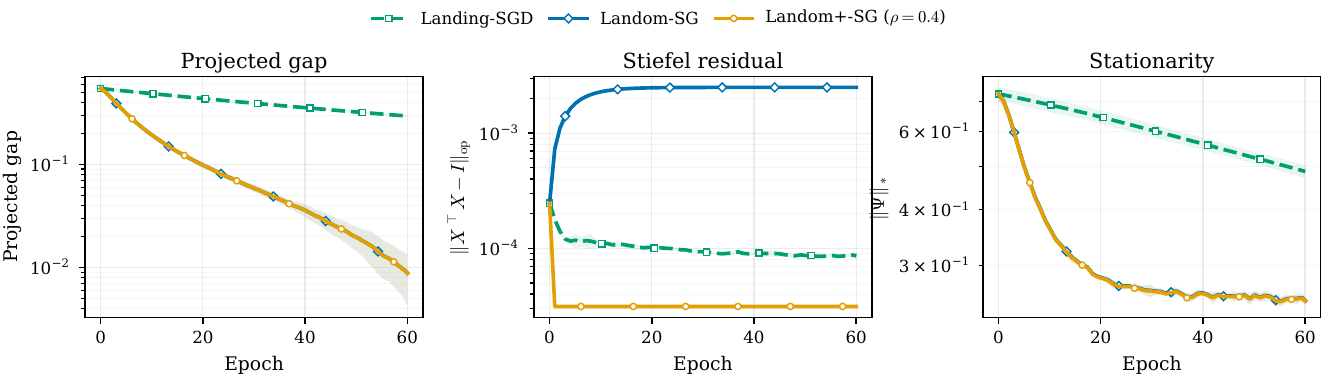}
\caption{\textbf{Mini-batch real-data ICA.} Objective gap after projection,
Stiefel residual, and nuclear-norm stationarity with mini-batch size
\(B=256\).}
\label{fig:exp3-ica-minibatch-selected}
\end{figure}

\FloatBarrier

%% file: arxiv-landom.bbl
\begin{thebibliography}{36}
\providecommand{\natexlab}[1]{#1}
\providecommand{\url}[1]{\texttt{#1}}
\expandafter\ifx\csname urlstyle\endcsname\relax
  \providecommand{\doi}[1]{doi: #1}\else
  \providecommand{\doi}{doi: \begingroup \urlstyle{rm}\Url}\fi

\bibitem[Ablin and Peyr\'e(2022)]{pmlr-v151-ablin22a}
Pierre Ablin and Gabriel Peyr\'e.
\newblock Fast and accurate optimization on the orthogonal manifold without
  retraction.
\newblock In \emph{Proceedings of The 25th International Conference on
  Artificial Intelligence and Statistics}, volume 151 of \emph{Proceedings of
  Machine Learning Research}, pages 5636--5657. PMLR, 2022.

\bibitem[Ablin et~al.(2024)Ablin, Vary, Gao, and Absil]{ablin2024infeasible}
Pierre Ablin, Simon Vary, Bin Gao, and Pierre-Antoine Absil.
\newblock Infeasible deterministic, stochastic, and variance-reduction
  algorithms for optimization under orthogonality constraints.
\newblock \emph{Journal of Machine Learning Research}, 25\penalty0
  (389):\penalty0 1--38, 2024.

\bibitem[Absil and Malick(2012)]{absil2012projection}
P.-A. Absil and J{\'e}r{\^o}me Malick.
\newblock Projection-like retractions on matrix manifolds.
\newblock \emph{SIAM Journal on Optimization}, 22\penalty0 (1):\penalty0
  135--158, 2012.
\newblock \doi{10.1137/100802529}.

\bibitem[Absil et~al.(2008)Absil, Mahony, and Sepulchre]{absil2008optimization}
P.-A. Absil, Robert Mahony, and Rodolphe Sepulchre.
\newblock \emph{Optimization Algorithms on Matrix Manifolds}.
\newblock Princeton University Press, Princeton, NJ, 2008.
\newblock \doi{10.1515/9781400830244}.

\bibitem[Amsel et~al.(2026)Amsel, Persson, Musco, and Gower]{amsel2026polar}
Noah Amsel, David Persson, Christopher Musco, and Robert~M. Gower.
\newblock The {Polar Express}: Optimal matrix sign methods and their
  application to the {Muon} algorithm.
\newblock In \emph{International Conference on Learning Representations}, 2026.

\bibitem[An et~al.(2026)An, Li, Goldfarb, and Ma]{an2026demystifying}
Kang An, Jiaxiang Li, Donald Goldfarb, and Shiqian Ma.
\newblock Demystifying manifold constraints in {LLM} pre-training.
\newblock \emph{arXiv preprint arXiv:2605.04418}, 2026.

\bibitem[Bansal et~al.(2018)Bansal, Chen, and Wang]{bansal2018orthogonality}
Nitin Bansal, Xiaohan Chen, and Zhangyang Wang.
\newblock Can we gain more from orthogonality regularizations in training deep
  networks?
\newblock In \emph{Advances in Neural Information Processing Systems},
  volume~31, pages 4261--4271, 2018.

\bibitem[Bernstein(2025)]{bernstein2025manifolds}
Jeremy Bernstein.
\newblock Modular manifolds.
\newblock \emph{Thinking Machines Lab: Connectionism}, 2025.
\newblock \doi{10.64434/tml.20250926}.

\bibitem[Bernstein and Newhouse(2024)]{bernstein2024old}
Jeremy Bernstein and Laker Newhouse.
\newblock Old optimizer, new norm: An anthology.
\newblock In \emph{OPT 2024: Optimization for Machine Learning}, 2024.

\bibitem[Buchanan(2025)]{buchanan2025mmuonadmm}
Sam Buchanan.
\newblock A faster {Manifold Muon} with {ADMM}, 2025.

\bibitem[Cutkosky and Mehta(2020)]{pmlr-v119-cutkosky20b}
Ashok Cutkosky and Harsh Mehta.
\newblock Momentum improves normalized {SGD}.
\newblock In \emph{Proceedings of the 37th International Conference on Machine
  Learning}, volume 119 of \emph{Proceedings of Machine Learning Research},
  pages 2260--2268. PMLR, 13--18 Jul 2020.

\bibitem[Gao et~al.(2019)Gao, Liu, Chen, and Yuan]{gao2019parallelizable}
Bin Gao, Xin Liu, Xiaojun Chen, and Ya-xiang Yuan.
\newblock Parallelizable algorithms for optimization problems with
  orthogonality constraints.
\newblock \emph{SIAM Journal on Scientific Computing}, 41\penalty0
  (3):\penalty0 A1949--A1983, 2019.

\bibitem[Gao et~al.(2022)Gao, Vary, Ablin, and Absil]{gao2022optimization}
Bin Gao, Simon Vary, Pierre Ablin, and P.-A. Absil.
\newblock Optimization flows landing on the {Stiefel} manifold.
\newblock \emph{IFAC-PapersOnLine}, 55\penalty0 (30):\penalty0 25--30, 2022.
\newblock \doi{10.1016/j.ifacol.2022.11.023}.

\bibitem[Gramfort et~al.(2013)Gramfort, Luessi, Larson, Engemann, Strohmeier,
  Brodbeck, Goj, Jas, Brooks, Parkkonen, and
  H{\"a}m{\"a}l{\"a}inen]{gramfort2013mne}
Alexandre Gramfort, Martin Luessi, Eric Larson, Denis~A. Engemann, Daniel
  Strohmeier, Christian Brodbeck, Roman Goj, Mainak Jas, Teon Brooks, Lauri
  Parkkonen, and Matti~S. H{\"a}m{\"a}l{\"a}inen.
\newblock {MEG} and {EEG} data analysis with {MNE-Python}.
\newblock \emph{Frontiers in Neuroscience}, 7:\penalty0 267, 2013.
\newblock \doi{10.3389/fnins.2013.00267}.

\bibitem[Hazan et~al.(2015)Hazan, Levy, and Shalev-Shwartz]{hazan2015beyond}
Elad Hazan, Kfir~Y. Levy, and Shai Shalev-Shwartz.
\newblock Beyond convexity: Stochastic quasi-convex optimization.
\newblock In \emph{Advances in Neural Information Processing Systems},
  volume~28, pages 1594--1602, 2015.

\bibitem[Hotelling(1933)]{hotelling1933principal}
Harold Hotelling.
\newblock Analysis of a complex of statistical variables into principal
  components.
\newblock \emph{Journal of Educational Psychology}, 24\penalty0 (6):\penalty0
  417--441, 1933.
\newblock \doi{10.1037/h0071325}.

\bibitem[Hotelling(1936)]{hotelling1936relations}
Harold Hotelling.
\newblock Relations between two sets of variates.
\newblock \emph{Biometrika}, 28\penalty0 (3-4):\penalty0 321--377, 1936.
\newblock \doi{10.1093/biomet/28.3-4.321}.

\bibitem[Hyv{\"a}rinen(1999)]{hyvarinen1999fast}
Aapo Hyv{\"a}rinen.
\newblock Fast and robust fixed-point algorithms for independent component
  analysis.
\newblock \emph{IEEE Transactions on Neural Networks}, 10\penalty0
  (3):\penalty0 626--634, 1999.
\newblock \doi{10.1109/72.761722}.

\bibitem[Jordan et~al.(2024)Jordan, Jin, Boza, You, Cesista, Newhouse, and
  Bernstein]{jordan2024muon}
Keller Jordan, Yuchen Jin, Vlado Boza, Jiacheng You, Franz Cesista, Laker
  Newhouse, and Jeremy Bernstein.
\newblock {Muon}: An optimizer for hidden layers in neural networks, 2024.

\bibitem[Krizhevsky(2009)]{krizhevsky2009learning}
Alex Krizhevsky.
\newblock Learning multiple layers of features from tiny images.
\newblock Technical report, University of Toronto, 2009.
\newblock URL
  \url{https://www.cs.toronto.edu/~kriz/learning-features-2009-TR.pdf}.

\bibitem[Levy(2016)]{levy2016power}
Kfir~Y. Levy.
\newblock The power of normalization: Faster evasion of saddle points.
\newblock \emph{arXiv preprint arXiv:1611.04831}, 2016.

\bibitem[Li et~al.(2026)Li, Pandey, Sah, Han, Mostajeran, Jawanpuria, and
  Mishra]{li2026intrinsic}
Yibang Li, Bihari~Lal Pandey, Ravi Sah, Andi Han, Cyrus Mostajeran, Pratik
  Jawanpuria, and Bamdev Mishra.
\newblock Intrinsic {Muon}: Spectral optimization on riemannian matrix
  manifolds.
\newblock \emph{arXiv preprint arXiv:2605.09238}, 2026.

\bibitem[Lu et~al.(2026)Lu, Mei, and Xiao]{lu2026variance}
Zhaosong Lu, Sanyou Mei, and Yifeng Xiao.
\newblock Variance-reduced first-order methods for deterministically
  constrained stochastic nonconvex optimization with strong convergence
  guarantees.
\newblock \emph{SIAM Journal on Optimization}, 36\penalty0 (1):\penalty0 1--31,
  2026.
\newblock \doi{10.1137/24M1693933}.

\bibitem[Park et~al.(2025)Park, Kang, Lee, Lee, Kim, and
  Lee]{park-etal-2025-riemannian}
JuneYoung Park, Minjae Kang, Seongbae Lee, Haegang Lee, Seongwan Kim, and Jaeho
  Lee.
\newblock {R}iemannian optimization for {L}o{RA} on the {Stiefel} manifold.
\newblock In \emph{Findings of the Association for Computational Linguistics:
  EMNLP 2025}, pages 20971--20985, Suzhou, China, November 2025. Association
  for Computational Linguistics.
\newblock \doi{10.18653/v1/2025.findings-emnlp.1143}.

\bibitem[Pethick et~al.(2025)Pethick, Xie, Antonakopoulos, Zhu, Silveti-Falls,
  and Cevher]{pmlr-v267-pethick25a}
Thomas Pethick, Wanyun Xie, Kimon Antonakopoulos, Zhenyu Zhu, Antonio
  Silveti-Falls, and Volkan Cevher.
\newblock Training deep learning models with norm-constrained {LMO}s.
\newblock In \emph{Proceedings of the 42nd International Conference on Machine
  Learning}, volume 267 of \emph{Proceedings of Machine Learning Research},
  pages 49069--49104. PMLR, 13--19 Jul 2025.

\bibitem[Schechtman et~al.(2023)Schechtman, Tiapkin, Muehlebach, and
  Moulines]{schechtman2023orthogonal}
Sholom Schechtman, Daniil Tiapkin, Michael Muehlebach, and {\'E}ric Moulines.
\newblock Orthogonal directions constrained gradient method: From non-linear
  equality constraints to {Stiefel} manifold.
\newblock In \emph{Proceedings of Thirty Sixth Conference on Learning Theory},
  volume 195 of \emph{Proceedings of Machine Learning Research}, pages
  1228--1258. PMLR, 2023.

\bibitem[Sch{\"o}nemann(1966)]{schonemann1966generalized}
Peter~H. Sch{\"o}nemann.
\newblock A generalized solution of the orthogonal procrustes problem.
\newblock \emph{Psychometrika}, 31\penalty0 (1):\penalty0 1--10, 1966.
\newblock \doi{10.1007/BF02289451}.

\bibitem[Shen et~al.(2026)Shen, Huang, Huang, Shen, and Zhang]{shen2026muon}
Wei Shen, Ruichuan Huang, Minhui Huang, Cong Shen, and Jiawei Zhang.
\newblock On the convergence analysis of {Muon}.
\newblock \emph{arXiv preprint arXiv:2505.23737}, 2026.
\newblock Version 3, revised July 2026.

\bibitem[Solonko et~al.(2026)Solonko, Molozhavenko, and
  Rakhuba]{solonko2026skewon}
Mikhail Solonko, Alexander Molozhavenko, and Maxim Rakhuba.
\newblock {Muon} on the {Stiefel} manifold admits an exact closed-form update.
\newblock \emph{arXiv preprint arXiv:2608.06218}, 2026.

\bibitem[Wang et~al.(2020)Wang, Chen, Chakraborty, and Yu]{wang2020orthogonal}
Jiayun Wang, Yubei Chen, Rudrasis Chakraborty, and Stella~X. Yu.
\newblock Orthogonal convolutional neural networks.
\newblock In \emph{2020 IEEE/CVF Conference on Computer Vision and Pattern
  Recognition (CVPR)}, pages 11502--11512, June 2020.
\newblock \doi{10.1109/CVPR42600.2020.01152}.

\bibitem[Wen and Yin(2013)]{wen2013feasible}
Zaiwen Wen and Wotao Yin.
\newblock A feasible method for optimization with orthogonality constraints.
\newblock \emph{Mathematical Programming}, 142\penalty0 (1--2):\penalty0
  397--434, 2013.
\newblock \doi{10.1007/s10107-012-0584-1}.

\bibitem[Xiao et~al.(2022)Xiao, Liu, and Yuan]{xiao2022class}
Nachuan Xiao, Xin Liu, and Ya-xiang Yuan.
\newblock A class of smooth exact penalty function methods for optimization
  problems with orthogonality constraints.
\newblock \emph{Optimization Methods and Software}, 37\penalty0 (4):\penalty0
  1205--1241, 2022.

\bibitem[Xiao et~al.(2024)Xiao, Liu, and Toh]{xiao2024dissolving}
Nachuan Xiao, Xin Liu, and Kim-Chuan Toh.
\newblock Dissolving constraints for {Riemannian} optimization.
\newblock \emph{Mathematics of Operations Research}, 49\penalty0 (1):\penalty0
  366--397, 2024.
\newblock \doi{10.1287/moor.2023.1360}.

\bibitem[Xiong et~al.(2026)Xiong, Gao, and Absil]{xiong2026second}
Xinhui Xiong, Bin Gao, and P.-A. Absil.
\newblock A second-order method landing on the {Stiefel} manifold via
  {Newton--Schulz} iteration.
\newblock \emph{arXiv preprint arXiv:2605.02838}, 2026.
\newblock \doi{10.48550/arXiv.2605.02838}.

\bibitem[Yang and Lai(2026)]{yang2026manifold}
Kaiwei Yang and Lexiao Lai.
\newblock Manifold constrained steepest descent for smooth and closed-set
  optimization.
\newblock \emph{arXiv preprint arXiv:2601.21487}, 2026.

\bibitem[Zhang et~al.(2026)Zhang, Hu, Lai, Lin, and Wen]{zhang2026retraction}
Yuan Zhang, Jiang Hu, Zhijian Lai, Lin Lin, and Zaiwen Wen.
\newblock Retraction-free optimization over the {Stiefel} manifold for the
  {LoRA} fine-tuning.
\newblock \emph{IEEE Transactions on Pattern Analysis and Machine
  Intelligence}, pages 1--12, 2026.
\newblock \doi{10.1109/TPAMI.2026.3711595}.

\end{thebibliography}
